\newfont{\bb}{msbm10 at 11pt}
\newfont{\bbsmall}{msbm8 at 8pt}
\def\R{\mathbb{R}}
\def\r{\mathbb{R}}
\def\N{\mathbb{N}}
\def\esf{\mathbb{S}}
\newcommand{\nc}{\newcommand}
\newcommand{\ben}{\begin{enumerate}}
\newcommand{\bit}{\begin{itemize}}
\newcommand{\een}{\end{enumerate}}
\newcommand{\eit}{\end{itemize}}
\newcommand{\Ric}{\mbox{\rm Ric}}
\newcommand{\bp}{\begin{proof}}
\newcommand{\ep}{\end{proof}}
\DeclareMathOperator{\Ind}{Ind}
\DeclareMathOperator{\dist}{dist}
\DeclareMathOperator{\dvg}{div}
\DeclareMathOperator{\area}{area}
\DeclareMathOperator{\genus}{genus}
\DeclareMathOperator{\graph}{graph}
\def\r{{\rho}}
\def\s{{\sigma}}
\def\ov{\overline}
\def\de{{\delta}}
\def\ve{{\varepsilon}}
\newcommand{\pl}[2]{{\frac{\partial #1}{\partial #2}}}
\newcommand{\h}{\mathcal{H}}
\newcommand{\In}{\subset}
\newcommand{\Om}{\Omega}
\newcommand{\om}{\omega}
\newcommand{\dl}{{\delta}}
\newcommand{\Dl}{{\Delta}}
\newcommand{\al}{{\alpha}}
\newcommand{\ed}{{\rm d}}
\newcommand{\eps}{{\varepsilon}}
\newcommand{\fr}[2]{\frac{#1}{#2}}
\newcommand{\sm}{{\setminus}}
\newcommand{\vlinesub}[1]{\vline_{_{_{_{_{_{#1}}}}}}}
\newcommand{\la}{\langle}
\newcommand{\ra}{\rangle}
\newtheorem{theorem}{Theorem}[section]
\newtheorem{lemma}[theorem]{Lemma}
\newtheorem{proposition}[theorem]{Proposition}
\newtheorem{thm}[theorem]{Theorem}
\newtheorem{remark}[theorem]{Remark}
\newtheorem{corollary}[theorem]{Corollary}
\newtheorem{definition}[theorem]{Definition}
\newtheorem{claim}[theorem]{Claim}
\nc{\bl}{\begin{lemma} }
\nc{\el}{\end{lemma} }
\nc{\bt}{\begin{theorem} }
\nc{\et}{\end{theorem} }
\newcommand{\rc}{ \renewcommand }
\rc{\v}{    \overset{\longrightarrow} }
\definecolor{rr}{rgb}{.8,0,.3}
\newtheoremstyle{TheoremNum}
        {\topsep}{\topsep}              
        {\itshape}                      
        {}                              
        {\bfseries}                     
        {.}                             
        { }                             
        {\thmname{#1}\thmnote{ \bfseries #3}}
    \theoremstyle{TheoremNum}
    \newtheorem{thmn}{Theorem}
\begin{document}

\begin{title}
{CMC hypersurfaces with bounded Morse index}
\end{title}
\begin{author}
{Theodora Bourni\thanks{ The first author was supported by grant 707699 of the Simons Foundation} \and Ben Sharp \and Giuseppe
   Tinaglia
 }
\end{author}
\maketitle
\begin{abstract}
We develop a bubble-compactness theory for embedded CMC hypersurfaces with bounded index and area inside closed Riemannian manifolds in low dimensions. In particular we show that convergence always occurs with multiplicity one, which implies that the minimal blow-ups (bubbles) are all catenoids. We also provide bounds on the area of separating CMC surfaces of bounded (Morse) index and use this, together with the previous results, to bound  their genus.

\par
\vspace{.1cm} \noindent{\it Mathematics Subject Classification:}
Primary 53A10, Secondary 49Q05, 53C42.

\vspace{.1cm} \noindent{\it Key words and phrases:} Minimal surface,
constant mean curvature, finite index, curvature estimates, area estimates.
\end{abstract}


\section{Introduction}

Throughout this paper, $N$ will be a closed (compact and without boundary) Riemannian $n$-manifold of dimension $n\le 7$ and an $H$-hypersurface $M\In N$ will be a closed connected hypersurface embedded in $N$ with constant mean curvature (CMC) $H>0$.

We prove compactness results for sequences of $H$-hypersurfaces in $N$: this study is inspired by the result by Choi and Schoen~\cite{cs1} that
the moduli space of fixed genus closed minimal surfaces embedded in $( \esf^3, h )$ with a metric $h$ of positive Ricci curvature
has the structure of  a compact real analytic variety, see Theorem~\ref{thm:conv}. 

 \begin{theorem}\label{thm:conv2} Let $3\leq n \leq 7$. Given $H>0$, let $\{M_k\}_{k\in \N}$ be a sequence of $H$-hypersurfaces in $N^n$ satisfying
 \[
 \sup_k \h^{n-1}(M_k)<\infty \quad \text{and}\quad \sup_k \Ind_0(M_k)<\infty.
 \]
 Then, there exists a hypersurface $M_\infty$ effectively embedded in $N$ with constant mean curvature $H$ and a finite set of points $\Delta\subset N$ such that, after passing to a subsequence, $\{M_k\}_{k\in \N}$ converges smoothly and with \textbf{multiplicity one}, to $M_\infty$ away from $\Delta$. Furthermore $\Dl$ is contained in the non-embedded part of $M_\infty$. 
 \end{theorem}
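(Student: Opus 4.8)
The plan is to carry out the standard blow-up/bubbling analysis for hypersurfaces of bounded index, the essential extra input being that the sign condition $H>0$ rigidifies the co-orientation and thereby forces multiplicity one. Put $I:=\sup_k\Ind_0(M_k)$ and $\Lambda:=\sup_k\h^{n-1}(M_k)$. First I would produce a finite set $\Delta\subset N$, with $\#\Delta$ bounded in terms of $I$, such that $\sup_k\sup_{K\cap M_k}|A_{M_k}|<\infty$ for every compact $K\subset N\setminus\Delta$. This rests on a curvature estimate for weakly (volume-preserving) stable CMC hypersurfaces in dimension $n\le 7$ with bounded area --- the CMC counterpart of the Schoen--Simon estimates, which I take as available: if $M_k\cap B$ is weakly stable on a small geodesic ball $B$ then $|A_{M_k}|$ is bounded on the concentric half-ball in terms of $N$, $\Lambda$ and the radius (proved by rescaling at near-maximal curvature points; since rescaling sends $H\to 0$, a hypothetical blow-up is a complete non-flat stable minimal hypersurface in $\mathbb R^n$, excluded for $n\le 7$). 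Hence at any point $p$ where $|A_{M_k}|$ is not locally bounded, $M_k\cap B$ is weakly unstable for all small $B\ni p$ and large $k$, so it supports a mean-zero variation with negative index form compactly supported in $B$; if there were $I+2$ such points, picking pairwise disjoint balls and intersecting the resulting $(I+2)$-dimensional space on which the index form is negative with the codimension-one subspace of zero-average functions would give $\Ind_0(M_k)\ge I+1$, a contradiction. So only finitely many such points occur, and they form $\Delta$.

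Next, using these local curvature bounds together with the area bound, Arzel\`a--Ascoli and elliptic estimates, I would extract a subsequence converging in $C^\infty_{\mathrm{loc}}(N\setminus\Delta)$, and globally as integral varifolds, to an integral varifold $V$ of finite mass with generalized mean curvature of magnitude $H$; on $N\setminus\Delta$ its support $\Sigma$ is a smooth embedded CMC-$H$ hypersurface and $V=m[\![\Sigma]\!]$ for a locally constant integer $m\ge 1$ (the lower bound from the monotonicity formula, using $H$ bounded, which also keeps $V$ nonzero). At a point of $\Delta$ the density of $V$ is finite, so Allard's regularity theorem for varifolds of bounded mean curvature --- together with the fact that, in this dimension range, a tangent cone arising as a limit of embeddings is a hyperplane --- shows the point is either a removable singularity, where the convergence is in fact smooth and the point may be deleted from $\Delta$, or a point where two or more smooth sheets of $M_\infty:=\spt V$ come together. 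Thus $M_\infty$ is effectively embedded with constant mean curvature $H$, and after this shrinking $\Delta$ lies in the non-embedded part of $M_\infty$ and $M_k\to M_\infty$ smoothly with multiplicity $m$ away from $\Delta$. Only $m=1$ remains.

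To prove multiplicity one I argue by contradiction, supposing $m\ge 2$; this is the sole step using $H>0$ rather than just $H\ge 0$. If $M_k$ is non-separating I first pass to the double cover of $N$ determined by $[M_k]\in H_{n-1}(N;\Z_2)$, in which $M_k$ becomes separating and two-sided; this preserves the bounds on $\Ind_0$ and $\h^{n-1}$, the CMC condition and the local sheeting number, so I may assume $N\setminus M_k=\Omega_k^+\sqcup\Omega_k^-$. Let $\nu_k$ be the unit normal of $M_k$ with respect to which the scalar mean curvature is $+H>0$, with $\Omega_k^\pm$ labelled so that $\nu_k$ points out of $\Omega_k^-$ into $\Omega_k^+$, and fix on the smooth two-sided $\Sigma$ the normal $\nu_\infty$ with scalar mean curvature $+H$. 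Near a point $q$ of the smooth embedded part of $M_\infty$, write $M_k$ as a disjoint union of $m$ graphs $u_1^k<\cdots<u_m^k\to 0$ in $C^2$ over $M_\infty$, ordered along $\nu_\infty$. The crucial observation is orientation rigidity: by $C^2$-convergence the scalar mean curvature of $\graph(u_i^k)$ with respect to its normal close to $+\nu_\infty$ tends to that of $M_\infty$, namely $+H>0$; but $\graph(u_i^k)\subset M_k$ has scalar mean curvature $+H$ for exactly one of its two normals, so for $k$ large that normal must be $\nu_k|_{\graph(u_i^k)}$ --- every sheet carries $\nu_k\approx+\nu_\infty$. Now the slab $S_k$ lying strictly between $\graph(u_1^k)$ and $\graph(u_2^k)$ is a connected subset of $N\setminus M_k$, hence contained in $\Omega_k^+$ or in $\Omega_k^-$; yet points of $S_k$ just on the $+\nu_\infty$-side of $\graph(u_1^k)$ lie on the $\nu_k$-side of sheet $1$, hence in $\Omega_k^+$, whereas points of $S_k$ just on the $-\nu_\infty$-side of $\graph(u_2^k)$ lie on the $-\nu_k$-side of sheet $2$, hence in $\Omega_k^-$ --- which is impossible. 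Therefore $m=1$.

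The main obstacle is this last step; within it the delicate points are (i) running the orientation-rigidity-plus-separation argument at a point of the \emph{smooth embedded} part of $M_\infty$, so that the $u_i^k$ are honest small graphs, and (ii) the reduction to the separating case, i.e. checking that the double cover does not disturb the bounded-index/area hypotheses or the local sheeting. The heaviest bookkeeping overall, however, is likely the removable-singularity analysis at $\Delta$ in the previous step, identifying the limit as an effectively embedded CMC hypersurface whose non-embedded locus contains $\Delta$. Finally, once multiplicity one is known no area is lost across $\Delta$, so the rescaled limits at points of $\Delta$ are complete embedded minimal hypersurfaces of finite index and finite area --- hence catenoids, as stated in the abstract.
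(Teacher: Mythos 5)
Your overall scaffolding (finite singular set from the index bound, curvature estimates off $\Delta$, varifold limit with bounded mean curvature, Allard regularity to identify a smooth limit) agrees with the paper, but the key multiplicity-one step is proved by a genuinely different and markedly simpler argument than the one in the paper. The paper devotes all of Section~4 (Theorem~\ref{thm:mult}) to this: it tracks the connected components $Q_i^y$ of $M_k\cap B_\delta(y)$ near each $y\in\Delta$, shows $\Delta\subset t(V)$ (Claim~\ref{Dtv}), then proves via a barrier/minimization construction (Claim~\ref{stableinw}, a plateau-type problem between a strictly unstable sheet and a nearby strictly stable parallel foliation) that at most one such $Q_i^y$ can fail to be graphical across the annulus (Claim~\ref{disconn}), and only then concludes by a separation argument local to $\Delta$. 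You instead argue entirely away from $\Delta$: at a smooth embedded point $q$ of $M_\infty$ write $M_k$ as $m$ ordered graphs; the strict sign $H>0$ forces every sheet to carry $\nu_k\approx+\nu_\infty$ (the scalar mean curvature of a sheet can only be $\pm H$, and $C^2$-convergence pins it to $+H$); the slab between two consecutive sheets is then simultaneously adjacent to $\Omega_k^+$ (from below sheet~$2$) and $\Omega_k^-$ (from above sheet~$1$), contradicting connectedness. Nonseparating $M_k$ are handled by lifting to the $\Z_2$-double cover determined by $[M_k]$ --- correct, with the small caveat you should make explicit that $H_{n-1}(N;\Z_2)$ is finite so one may pass to a subsequence on which this class (and hence the cover) is fixed, and that the lift $\widehat M_k$ may be disconnected but still separates. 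This route is valid for Theorem~\ref{thm:conv2} and avoids the barrier machinery entirely; the trade-off is that it does not by itself recover the finer structure near $\Delta$ (uniqueness of the ``neck'' component) that the paper needs for the bubbling analysis of Section~\ref{sec_bubb}.

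One genuine soft spot: your construction of the effectively embedded limit and the claim $\Delta\subset t(M_\infty)$ is only sketched. Invoking ``Allard plus tangent cones are hyperplanes'' is fine at density~$1$ points, but at a singular point of density $\geq 2$ Allard does not apply directly and one must rule out more complicated tangential singularities than two smooth sheets meeting with opposite co-orientation. The paper does this work via Claim~\ref{dcurvest} (the improved estimate $|A_{M_\infty}|(p)\le\delta/\dist_N(p,q_i)$ for every $\delta>0$, proved by blow-up and strong stability of $M_\infty$ in punctured balls), which forces the rescalings of $M_\infty$ at $q_i$ to converge smoothly to planes, and then follows the removable-singularity analysis of \cite{wh5}/\cite{Ao}. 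You should either import that analysis or give an equivalent argument; as written this part is not yet a proof.
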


Contrary to the setting of minimal hypersurfaces, it is possible that a sequence of embedded $H$-hypersurfaces ($H>0$) converges to a limit which is itself not embedded. For instance a sequence of degenerating Delaunay surfaces converges to a string of pearls - CMC spheres which self-intersect tangentially. We refer to connected collections of $H$-hypersurfaces which meet tangentially as ``effectively embedded'' (see Definition \ref{def:ee}). Here $\Ind_0$ refers to the number of negative eigenvalues of the Jacobi operator when restricted to volume-preserving deformations (see Section \ref{prelim}).
Our compactness theorem guarantees that any weak-limit of a sequence of $H$-hypersurfaces with bounded Morse index ($\Ind_0$) and area is effectively embedded and obtained via multiplicity one graphical convergence away from finitely many points.

 \begin{remark}
Notice that the convergence always happens with multiplicity one as a result of the strict positivity of the mean curvature $H>0$ and not by an assumption on the ambient manifold. If $\{M_k\}$ are all separating and stable ($\Ind_0=0$), multiplicity one convergence has been obtained in \cite[Theorem 2.11 (ii)]{ZZ19} where in this case $\Dl = \emptyset$ by the regularity theory for stable CMC hypersurfaces (see e.g. Lemma \ref{sreg}). The theorem above shows that multiplicity one convergence continues to hold  under bounded index, regardless of whether $\Dl$ is empty or not.   These facts are in sharp contrast to the setting of minimal hypersurfaces where higher multiplicity convergence is guaranteed if $\Dl \neq \emptyset$, or ruled out altogether (for instance) under the assumption that $\Ric_N >0$ \cite{Ben}.

Furthermore when $n=3$, in view of Theorem  \ref{thm_area}, we can replace the assumed area bound with the topological condition that each $M_k$ separates $N$.   \end{remark}

In \cite{bw19} the authors develop an extensive regularity and compactness theory for codimension 1 integral varifolds with constant mean curvature and finite index in a Riemannian manifold of any dimension. They in fact deal with a much larger class of varifolds with appropriately bounded first variation.

Inspired by the bubbling analysis carried out in \cite{BS18}, and the works of Ros \cite{Ros95} and White \cite{W15}, we are able to capture shrinking regions of instability along a convergent sequence $M_k$ to provide a more refined picture close to $\Dl$. As in \cite{BS18} we can blow-up these regions to obtain complete embedded minimal hypersurfaces in $\R^n$ (``bubbles'') which themselves have finite index and Euclidean volume growth. A key feature in the setting of $H$-hypersurfaces ($H>0$), is that the multiplicity one convergence guaranteed by Theorem \ref{thm:conv2} implies that all bubbles have two ends (since they occur at the non-embedded part of the limit) and are therefore catenoids thanks to the classification results of Schoen \cite{sc1}. The full statements of these results can be found in Section \ref{sec_bubb}, but for now we content ourselves with stating the following corollary of the bubble-compactness Theorem \ref{thm:bubb}: 

\begin{theorem}[Corollary \ref{cor_bubb}]
Let $3\leq n \leq 7$ and $H>0$. 
Then there exists $\mathcal{G}=\mathcal{G}(N,\Lambda, \mathcal{I},H)$ so that the collection of $H$-hypersurfaces with index bounded by $\mathcal{I}$ and volume bounded by $\Lambda$ has at most $\mathcal{G}$ distinct diffeomorphism types. Furthermore for any $H$-hypersurface $M$ with the above index and volume bounds we have uniform control on the total curvature 
\[\int_M |A|^{n-1} \leq \mathcal{G}.\]
\end{theorem}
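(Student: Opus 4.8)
The plan is to deduce the statement from Theorem~\ref{thm:bubb} by a compactness-and-contradiction argument: essentially all of the geometric content is already contained in Theorem~\ref{thm:bubb}, and what remains is to package its structural conclusions. First I would establish the total curvature bound. Suppose it fails: then there is a sequence $\{M_k\}$ of $H$-hypersurfaces with $\Ind_0(M_k)\le\mathcal I$ and $\h^{n-1}(M_k)\le\Lambda$ but $\int_{M_k}|A|^{n-1}\to\infty$. After passing to a subsequence, Theorem~\ref{thm:conv2} gives smooth multiplicity-one convergence $M_k\to M_\infty$ away from a finite set $\Delta$, and Theorem~\ref{thm:bubb} attaches to the points of $\Delta$ finitely many minimal bubbles $\hat M_1,\dots,\hat M_b$ in $\R^n$; by multiplicity one these have two ends, hence are catenoids by Schoen's classification~\cite{sc1}, and $b$ is bounded in terms of $\mathcal I$ since each bubble contributes a fixed positive amount to the limit of the index. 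The scale invariance of $\int|A|^{n-1}$ under ambient dilations (as $|A|$ scales like $\lambda^{-1}$ and $\ed\h^{n-1}$ like $\lambda^{n-1}$) makes this quantity compatible with the blow-up, and the energy identity built into Theorem~\ref{thm:bubb} --- that no curvature is lost in the neck regions interpolating between the macroscopic and microscopic scales --- gives
\[ \lim_{k\to\infty}\int_{M_k}|A|^{n-1}\;=\;\int_{M_\infty}|A|^{n-1}\;+\;\sum_{j=1}^{b}\int_{\hat M_j}|A|^{n-1}. \]
The right-hand side is finite, since $M_\infty$ is a compact smooth effectively embedded hypersurface and each catenoid has finite total curvature (being a minimal hypersurface of finite index with Euclidean volume growth). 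This contradicts the choice of the sequence, so $\sup\int_M|A|^{n-1}$ over all $H$-hypersurfaces obeying the two bounds is finite.

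Next I would bound the number of diffeomorphism types. If there were infinitely many, I would pick pairwise non-diffeomorphic representatives $\{M_k\}$ with the given bounds and run the same convergence. Outside $\bigcup_{p\in\Delta}B_\rho(p)$ the multiplicity-one smooth convergence forces $M_k$, for $k$ large, to be a normal graph over $M_\infty$, hence diffeomorphic to $M_\infty$ with one small disc removed around each point of $\Delta$. Inside each $B_\rho(p)$, Theorem~\ref{thm:bubb} identifies $M_k$ with a fixed model built from the local sheets of $M_\infty$ through $p$ by surgering in truncated catenoidal necks; the relevant data --- how many sheets, how many necks, and which sheets each neck joins --- is discrete and, once $M_\infty$ and $\Delta$ are fixed, drawn from a finite list. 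Hence all sufficiently large $M_k$ are diffeomorphic to one fixed closed manifold $\hat M$, contradicting the choice of the representatives. Therefore the set of realised diffeomorphism types is finite, and taking $\mathcal G$ to be the larger of its cardinality and the supremum from the previous step produces the claimed $\mathcal G=\mathcal G(N,\Lambda,\mathcal I,H)$.

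The hard part is not in this packaging but in Theorem~\ref{thm:bubb} itself: the two-endedness of the bubbles, the topological triviality and absence of curvature concentration of the neck regions, and the index-controlled bound on the number of bubbles are exactly what license the reconstruction of both the topology and the total curvature of $M_k$ from $M_\infty$ and the bubbles. The one point requiring mild care in the argument above is the local topology near the non-embedded points of the effectively embedded limit $M_\infty$: there ``inserting a catenoidal neck'' must be read as the appropriate connected-sum / $1$-handle surgery on the sheets meeting tangentially, and one should check that the finitely many such local models exhaust the possibilities.
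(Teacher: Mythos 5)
Your proposal is correct and follows the same route the paper intends: the paper simply states that Corollary~\ref{cor_bubb} follows ``by a contradiction argument'' from Theorem~\ref{thm:bubb}, and you have spelled out that argument accurately, using part~4 (no loss of total curvature) for the integral bound and the structural description of the necks and bubbles for the diffeomorphism-type bound. The one redundancy worth noting is that for the finiteness of diffeomorphism types you re-derive the surgery picture from parts~1--2 of Theorem~\ref{thm:bubb}, whereas part~5 of that theorem already asserts directly that the $M_k$ of the extracted subsequence are pairwise diffeomorphic for $k$ large --- citing that statement would make your contradiction immediate and remove the need for the ``mild care'' about local models you flag at the end, since the paper has already built that verification into part~5.
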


Finally, motivated  by the results in~\cite{MT18}, when $n=3$ we provide bounds on the area of separating CMC surfaces of bounded (Morse) index and use this, together with the previous results, to bound  their genus as well.

\begin{theorem} \label{main}
Given $\mathcal I\in \mathbb N$ and $H>0$, let $M$ be an $H$-surface in $N$ with index bounded by $\mathcal{I}$. If we furthermore assume that \textbf{either} 
\begin{enumerate}
\item $M$ is separating in $N$, \textbf{or}
\item $N$ has finite fundamental group (e.g. if $N$ has positive Ricci curvature) 
\end{enumerate}
 then there exists a constant $\mathcal{A}:=\mathcal{A}(\mathcal{\mathcal I}, H, N)$ such that  
\[
\genus(M)+\area(M)\leq \mathcal{A}.
\]
\end{theorem}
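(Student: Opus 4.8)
The plan is to reduce the genus bound to an area bound via the already-established machinery, and then prove the area bound by combining the compactness theory (Theorem~\ref{thm:conv2}) with a contradiction/compactness argument à la \cite{MT18}. Concretely, if $M$ is an $H$-surface in $N^3$ with $\Ind_0(M)\le\mathcal I$ and bounded area, then Theorem~\ref{thm:conv2}, the bubble-compactness Theorem~\ref{thm:bubb}, and its Corollary~\ref{cor_bubb} already give a uniform bound $\int_M|A|^2\le \mathcal G$ on the total curvature; by Gauss--Bonnet (using the Gauss equation $K_M = \tfrac12(H^2/2)\cdot(\text{stuff}) + \tfrac12(|H|^2-|A|^2)+\ov K_N$ restricted to $M$, i.e. $2K_M = H^2/2 - |\mathring A|^2 + 2\,\mathrm{Sec}_N$ in dimension $3$) one gets $4\pi|\chi(M)| = |\int_M 2K_M|\le C(N,H)\area(M) + \int_M|A|^2 \le \mathcal A$, hence a genus bound. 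So the entire theorem follows once we prove: \emph{under hypothesis (1) or (2), an $H$-surface of index $\le\mathcal I$ has $\area(M)\le\mathcal A(\mathcal I,H,N)$}. This is precisely the content promised by Theorem~\ref{thm_area}; the statement here is its packaging together with Corollary~\ref{cor_bubb}.

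**For the area bound itself**, I would argue by contradiction: suppose there is a sequence $\{M_k\}$ of $H$-surfaces with $\Ind_0(M_k)\le\mathcal I$ but $\area(M_k)\to\infty$, each satisfying (1) or (2). The first step is to run the bounded-index (but \emph{not} bounded-area) compactness/regularity theory — the sheeted curvature estimates for bounded-index CMC surfaces away from finitely many points, in the spirit of \cite{bw19, BS18} — to extract a limit lamination/varifold $M_\infty$ away from a finite set $\Delta$. Because area is unbounded, one of two things must happen near some point: either the local sheet number blows up, producing higher multiplicity, or the area concentrates. The separating hypothesis (1) is used exactly as in \cite{ZZ19, MT18}: a separating $H$-surface bounds a region, and a stack of $m$ graphical sheets converging to a two-sided limit with the \emph{same} orientation of mean curvature vector is incompatible with $H>0$ on both co-oriented sides — mean curvature pointing consistently forces the nested regions to have monotonically related (and eventually contradictory) enclosed volumes, or forces $m=1$ and then bounded area by the monotonicity formula plus the finite bound on $\Delta$. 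Hypothesis (2), finite $\pi_1(N)$, is the substitute that rules out infinitely many parallel sheets by a covering-space/systole argument: lifting to the universal cover and using that a minimal-like stack would descend to arbitrarily many disjoint copies, contradicting a fixed volume of $N$ together with the monotonicity lower bound on area of each sheet.

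**The key technical steps, in order**: (i) quote the bounded-index curvature estimates to get, for any $M_k$, a decomposition into a region of bounded curvature (hence locally graphical with controlled area) plus finitely many ``necks''/concentration points whose number is bounded by $\mathcal I$; (ii) show each graphical piece, being a CMC graph with $H>0$ that is either separating or lifts nicely, has \emph{uniformly bounded area} — this is where $H>0$ is essential, since a CMC graph of small gradient over a large domain would enclose a large volume with boundary of controlled size, violating the isoperimetric-type inequality in $N$ (alternatively: a stable/bounded-index CMC graph over a ball of fixed size has area $\le C$ by the a priori estimates, and one covers $M_k$ by boundedly many such balls because the non-graphical set is finite); (iii) bound the number of such balls/pieces using the index bound via the standard ``one unit of index per unit of area beyond a threshold'' mechanism (a Schoen--Ros--White type argument: if area is large, one can find many disjoint regions each supporting a negative direction for the Jacobi form, forcing $\Ind_0$ large); (iv) conclude $\area(M_k)\le\mathcal A$, contradicting $\area(M_k)\to\infty$. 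Then feed $\mathcal A$ back into Gauss--Bonnet as above to get the genus bound.

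**The main obstacle** is step (ii)–(iii): showing that bounded index plus $H>0$ genuinely \emph{forces} bounded area, i.e. that one cannot have area drifting to infinity while index stays bounded. In the minimal case this can fail (flat tori, or high-genus minimal surfaces in $\esf^3$), so the argument must crucially exploit the strict mean curvature — most cleanly through the enclosed-volume / isoperimetric inequality available under hypothesis (1), and through the covering-space volume bound under hypothesis (2). Making the Schoen--Ros--White ``area $\Rightarrow$ index'' dichotomy work at the level of $\Ind_0$ (the volume-preserving index) rather than the full index requires some care, since the test functions must be arranged to have zero mean; this is handled by pairing up regions or by noting that a large collection of bumps can be made mean-zero after a one-dimensional correction, costing at most one extra unit of index. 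I would structure the write-up so that the two hypotheses (1) and (2) are treated by the same skeleton, differing only in the lemma invoked to preclude an unbounded number of sheets (isoperimetric inequality vs. finite cover), and then invoke Corollary~\ref{cor_bubb} and Gauss--Bonnet verbatim for the genus conclusion.
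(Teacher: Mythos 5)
Your top-level skeleton is correct and matches the paper: the genus bound reduces to an a priori area bound, and once the area bound is in hand one invokes the bubble-compactness Theorem~\ref{thm:bubb} / Corollary~\ref{cor_bubb} (whose part 4, $\chi(M_k)=\sum_i\chi(\ov{V}^i)-2J$, makes the Gauss--Bonnet step explicit) to control genus. You also correctly identify the area bound as the real content and that the proof should be by contradiction, using the bounded-index curvature estimates (Lemma~\ref{curvest}) to get locally bounded $|A|$ away from a set $\Delta$ of at most $\mathcal I+1$ points.

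Where your proposal diverges from (and is weaker than) the paper's is in the mechanism that converts bounded curvature plus $H>0$ into a bounded area. Your steps (ii)--(iii) invoke two devices that are not in the paper and, in the form you describe them, don't quite close the argument. The paper's key lemma is Claim~\ref{area51}: if $|A|\le\alpha$ on an $H$-convex ball $B_\rho^N(p)$ and $M$ separates, then $M\cap B_{\rho/2}^N(p)$ admits a one-sided regular neighbourhood of a fixed thickness $\beta=\beta(\alpha,J_H,S_0)$, so that
\[
\frac{1}{\omega}\,\h^2(M\cap B_{\rho/2}^N(p))\;\le\;\h^3(\mathcal U_\beta)\;\le\;\h^3(N).
\]
This is a maximum-principle statement (two oppositely-oriented $H$-surfaces, one on the mean-convex side of the other, cannot be too close away from their boundaries when $H>0$); it is the precise form of your "sheets cannot stack" intuition, but with the bound coming directly from $\h^3(N)<\infty$ rather than from an isoperimetric inequality. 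Crucially, your step (iii) --- the Schoen--Ros--White "area forces index" count, with the accompanying worry about arranging zero-mean test functions --- is unnecessary and does not appear. The index bound is used only to control $|\Delta|\le\mathcal I+1$; it plays no role in bounding the area on the region of good curvature, and chasing the area-implies-index route would force you to confront exactly the $\Ind_0$-versus-$\Ind$ subtlety you flag, for no gain. The region near $\Delta$ is handled differently too: not by a dichotomy between "sheet number blows up" and "area concentrates", but by a direct application of the monotonicity formula for bounded-mean-curvature submanifolds after isometrically embedding $N\hookrightarrow\R^m$, which shows $\h^2(M_k\cap B_\eps^N(p))\le\h^2(M_k\cap(B_{2\eps}^N\setminus B_\eps^N))$ for $\eps$ small, and the right-hand side is away from $\Delta$. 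Finally, for hypothesis (2), your covering-space story is more elaborate than needed: the paper simply lifts to the (finite) universal cover $\widetilde N$, where any connected lift $\widetilde M$ is automatically separating because $\widetilde N$ is simply connected, and then re-runs the exact same argument in $\widetilde N$ (with $|\Delta|$ now bounded by $|\pi_1(N)|(\mathcal I+1)$). In short: replace (ii)--(iii) by the one-sided regular neighbourhood lemma plus the monotonicity formula near $\Delta$, and replace the systole/descent argument for (2) by the bare fact that lifts to a simply connected finite cover are separating.
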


Since there exist examples of connected closed 
minimal surfaces embedded in and separating a flat 3-torus with arbitrarily large area but bounded index~\cite{tra5}, see Remark~\ref{example}, having $H>0$ is a necessary hypotheses to obtain an area estimate. For minimal surfaces embedded in closed three-manifolds $N$ with positive scalar curvature $R_N >0$, an analogous result has been obtained in \cite{chkm}. Indeed, Theorem~\ref{main} has been proven independently by Saturnino \cite{abs} using  techniques developed in \cite{chkm}. Finally, for arbitrary three-manifolds $N$ and \emph{immersed} CMC surfaces $\Sigma\In N$ with sufficiently large mean curvature $H_\Sigma > H_0$, an effective (and linear) genus bound in terms of index has been obtained in \cite{NH19}. 

We will prove the area estimate in Section \ref{sec_area} (See Theorem \ref{thm_area} and  Corollary \ref{cor_area}), the genus bound will then follow from a general bubble-compactness argument for $H$-hypersurfaces with bounded index and area, the full details of which appear in Section~\ref{sec_bubb}. 
\section{Preliminaries} \label{prelim}

Let $N^n$ be a closed (compact and without boundary) Riemannian $n$-manifold, where here and throughout we restrict $3\leq n \leq 7$. 
\begin{definition}
An $H$-hypersurface $M\In N$ will be a closed connected hypersurface embedded in $N$ with constant mean curvature $H>0$. When $n=3$ we will often refer to $M$ as an $H$-surface. 	
\end{definition}

Let  $\mu$ be the canonical measure corresponding to the metric on $M$ (inherited by the metric on $N$), $\nu$ a choice for its unit normal and $A$ the second fundamental form of the embedding.  We consider $Q$, the quadratic form associated to the Jacobi operator:
\[
Q(u,u)=\int_M|\nabla u|^2-(|A|^2+\Ric_N(\nu, \nu)) u^2 \,d\mu\,,\,\,u\in W^{1,2}(M)\,,
\]
where $\Ric_N$ is the Ricci curvature of $N$. 

Recall that for an open set $U\subset N$, the index of $M$ in $U$, $\Ind(M\cap U)$, is defined as the index of $Q$ over $W_0^{1,2}(M\cap U)$, that is, by the minimax classification of eigenvalues, the maximal dimension of the vector subspaces $E\subset \{u\in W_0^{1,2}(M\cap U): Q(u,u)< 0 \}$. 

Constant mean curvature (CMC) hypersurfaces are critical points of the area ($\h^{n-1}$-measure) functional for variations which preserve the signed volume ($\h^{n}$-measure). This can be characterised infinitesimally as all variations whose initial normal speed $u$ satisfies $\int_Mu\,d\mu=0$. Thus it makes sense to define a new index $\Ind_0(M\cap U)$ as the index of $Q$ over 
$$\dot W_0^{1,2}(M\cap U)=\{u\in W_0^{1,2}(M\cap U):\int_{M\cap U}u\,d\mu=0\}$$ that is the maximal  dimension of the vector subspaces $\tilde{E}\subset \{u\in \dot W_0^{1,2}(M\cap U): Q(u,u)< 0\}$. 
We will call the CMC surface $M$ \emph{stable} (in $U$) if $\Ind_0(M)=0$ ($\Ind_0(M\cap U)=0$) and \emph{strongly stable} (in $U$) if $\Ind(M)=0$ ($\Ind(M\cap U)=0$). 
Note that if $U\subset W \subset N$ are open sets, then $\Ind(M\cap W)\geq \Ind(M\cap U)$ and $\Ind_0(M\cap W)\geq \Ind_0(M\cap U)$ and the two indices satisfy the following relation.

\begin{lemma}\label{lem:index} For any $k\in \N\cup\{0\}$ we have
\[
\Ind_0(M)=k \implies k\le \Ind(M)\leq k+1\,.
\]
\end{lemma}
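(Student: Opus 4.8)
The plan is to compare the index of the quadratic form $Q$ on the full space $W_0^{1,2}(M)$ with its index on the codimension-one subspace $\dot W_0^{1,2}(M) = \{u : \int_M u\,d\mu = 0\}$. Since $M$ is closed, the map $u \mapsto \int_M u\,d\mu$ is a bounded linear functional on $W^{1,2}(M)$, so $\dot W_0^{1,2}(M)$ is a closed subspace of codimension one inside $W^{1,2}(M) = W_0^{1,2}(M)$ (no boundary). The two inequalities then follow from general linear algebra about how the index of a quadratic form can change when you restrict to a hyperplane.

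For the lower bound $\Ind(M) \geq k$: if $\tilde E \subset \dot W_0^{1,2}(M)$ is a subspace of dimension $k$ on which $Q$ is negative definite, then $\tilde E$ is also a subspace of $W_0^{1,2}(M)$ of dimension $k$ on which $Q < 0$, so by the minimax definition $\Ind(M) \geq k$. This is immediate from $\dot W_0^{1,2}(M) \subset W_0^{1,2}(M)$ and requires no work.

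For the upper bound $\Ind(M) \leq k+1$: suppose $E \subset W_0^{1,2}(M)$ has dimension $k+2$ with $Q$ negative definite on $E$. The linear functional $u \mapsto \int_M u\,d\mu$ restricted to $E$ has a kernel $E' := E \cap \dot W_0^{1,2}(M)$ of dimension at least $k+1$. Then $Q$ is negative definite on $E' \subset \dot W_0^{1,2}(M)$, contradicting $\Ind_0(M) = k$. Hence no such $(k+2)$-dimensional subspace exists, i.e. $\Ind(M) \leq k+1$. I would phrase this cleanly via the standard fact that restricting a symmetric bilinear form to a subspace of codimension $m$ decreases the index by at most $m$ (here $m=1$), applied in the direction $W_0^{1,2} \rightsquigarrow \dot W_0^{1,2}$.

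There is essentially no serious obstacle here; the only point requiring a word of care is that all subspaces and functionals live in the correct function spaces and that the codimension-one claim for $\dot W_0^{1,2}(M)$ inside $W_0^{1,2}(M)$ is legitimate — which it is, because $M$ closed forces $W_0^{1,2}(M) = W^{1,2}(M)$ and the volume functional $u \mapsto \int_M u\,d\mu$ is continuous and not identically zero on it. One should also note the trivial observation that $\Ind_0(M) \leq \Ind(M)$ always (taking $k = \Ind_0(M)$ gives $k \leq \Ind(M)$), which is exactly the left inequality. I would present the argument in two short paragraphs corresponding to the two inequalities, citing the minimax characterization of the index recalled just above the lemma.
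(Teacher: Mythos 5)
Your argument is essentially identical to the paper's: the lower bound from the trivial inclusion $\dot W^{1,2}_0(M)\subset W^{1,2}_0(M)$, and the upper bound by intersecting a hypothetical $(k+2)$-dimensional negative subspace $E$ with the kernel of $u\mapsto\int_M u\,d\mu$ to produce a $(k+1)$-dimensional negative subspace of $\dot W^{1,2}_0(M)$, contradicting $\Ind_0(M)=k$. The paper's $E^\top$ is your $E'$, and the rest is the same codimension-one reasoning, so the proposal is correct and matches the paper's proof.
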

\begin{proof}It follows trivially from the definition of our indices that $\Ind_0(M) \leq \Ind(M)$. So suppose that the lemma is not true and instead we have $\Ind(M)\geq k+2$. Thus there exists a $k+2$-dimensional vector subspace $E\In W^{1,2}(M)$ with $Q(f,f)<0$ for all $f\in E$. Let $E^\top=\{f\in E : \int_M f = 0\} \In \dot W^{1,2}(M)$, then $\dim E^\top \geq k+1$ and we still have $Q(f,f)<0$ for all $f\in E^\top$ giving $\Ind_0 (M) \geq k+1$, a contradiction.  
\end{proof}

%
%

Next we remind the reader of the curvature estimates available for stable $H$-hypersurfaces via the work of Lopez--Ros \cite{lor2} when $n=3$ and Schoen-Simon \cite{SS81} when $n\geq 4$. 

\begin{lemma}\label{sreg}
Let $H>0$ be fixed and $M^{n-1}\In N^n$ an $H$-hypersurface. Given $p\in M$ and $\rho>0$, assume that $M\not\subset B^N_{\rho}(p)$ and that either
\begin{itemize}
\item[(i)] $n=3$, $\Ind_0(M\cap B_\r^N(p))=0$  or
\item[(ii)] $n\le 7$, $\Ind(M\cap B_\r^N(p))=0$ and $\rho^{-(n-1)}\mathcal H^{n-1}(M\cap B^N_\rho(p))\le \mu$\,.
\end{itemize}
 Then,
\[
 |A|(p)\leq \fr{C}{\rho}\,,
 \]
 where $C$ is a constant that depends on $N$, the value of the mean curvature and, in case (ii),  also on $\mu$ .
\end{lemma}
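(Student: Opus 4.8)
The plan is to argue by contradiction via a blow-up (point-selection) argument, reducing the estimate in each case to a Bernstein-type rigidity theorem for complete stable minimal hypersurfaces in $\R^n$ --- which is exactly where the work of L\'opez--Ros ($n=3$) and Schoen--Simon ($n\le 7$) enters.

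Suppose the lemma were false. Then there would be $H$-hypersurfaces $M_k\In N$, points $p_k\in M_k$ and radii $\rho_k>0$ (necessarily $\le\operatorname{diam}(N)$, hence bounded) with $M_k\not\subset B^N_{\rho_k}(p_k)$, for which hypothesis (i) or (ii) holds on $B^N_{\rho_k}(p_k)$, yet $\rho_k\,|A_{M_k}|(p_k)\to\infty$. In case (ii) the CMC monotonicity formula first upgrades the density hypothesis to a \emph{uniform} bound $\sigma^{-(n-1)}\mathcal H^{n-1}(M_k\cap B^N_\sigma(x))\le\mu'$ for all $x\in M_k\cap B^N_{\rho_k/2}(p_k)$ and $\sigma\le\rho_k/2$, with $\mu'=\mu'(\mu,N,H)$. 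A standard point-selection procedure (as in Schoen's curvature estimate) then produces base points $q_k\in M_k$ with $B^N_{r_k}(q_k)\subset B^N_{\rho_k/4}(p_k)$ for suitable $r_k>0$, and scales $\lambda_k:=|A_{M_k}|(q_k)$ with $\lambda_k\to\infty$, $\lambda_k r_k\to\infty$ and $|A_{M_k}|\le 2\lambda_k$ on $M_k\cap B^N_{r_k}(q_k)$. Dilating the metric by $\lambda_k^2$ in geodesic normal coordinates at $q_k$, the ambient metrics converge smoothly to the Euclidean metric on $\R^n$, while the rescaled surfaces $\widetilde M_k$ pass through $0$ with $|A_{\widetilde M_k}|(0)=1$, obey $|A_{\widetilde M_k}|\le 2$ on balls exhausting $\R^n$, have constant mean curvature $H/\lambda_k\to 0$, and (in case (ii)) have density ratios bounded by $\mu'$ on those balls. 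By the usual compactness for embedded hypersurfaces with locally bounded second fundamental form, a subsequence converges in $C^{\infty}_{\mathrm{loc}}$ to a complete, two-sided, embedded minimal hypersurface $\Sigma\subset\R^n$ which is \emph{not} flat, since $|A_\Sigma|(0)=1$. Since $B^N_{r_k}(q_k)\subset B^N_{\rho_k}(p_k)$, the monotonicity of $\Ind$ and $\Ind_0$ under inclusion of domains (Section~\ref{prelim}) transfers hypothesis (i) or (ii) to $\widetilde M_k$ on every fixed ball.

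It now suffices to show that $\Sigma$ is a \emph{stable} minimal hypersurface with Euclidean volume growth, as this forces $\Sigma$ to be a hyperplane, a contradiction. In case (ii): $\Ind(\widetilde M_k\cap B_R)=0$ passes to $\Ind(\Sigma\cap B_R)=0$ for every $R$ by semicontinuity of the index under smooth convergence, so $\Sigma$ is stable, and the density bounds give $\mathcal H^{n-1}(\Sigma\cap B_R(0))\le\mu' R^{n-1}$ for all $R$; the stable Bernstein theorem for $n\le 7$ with Euclidean area growth (Schoen--Simon) then makes $\Sigma$ flat. In case (i), $n=3$: domain-monotonicity of $\Ind_0$ gives $\Ind_0(\widetilde M_k\cap B_R)=0$, hence $\Ind(\widetilde M_k\cap B_R)\le 1$ by Lemma~\ref{lem:index}; passing both to the limit, $\Sigma$ has index at most $1$, so finite total curvature by Fischer-Colbrie's theorem and therefore at most quadratic area growth, while still having infinite total area by the monotonicity formula. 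Finally $\Ind_0(\Sigma)=0$ is upgraded to genuine stability: were $Q_\Sigma(w,w)<0$ for some $w\in C_c^\infty(\Sigma)$ (and, after a harmless perturbation, $\int_\Sigma w\ne 0$), one could add a cut-off $v_R$ supported in a far annulus $B_{2R}\sm B_R$, normalised so that $\int_\Sigma v_R=-\int_\Sigma w$; disjointness of supports kills the cross term, while $Q_\Sigma(v_R,v_R)\le\int_\Sigma|\nabla v_R|^2\to 0$ because the annular area grows at least quadratically, so the normalising factor tends to $0$; for $R$ large this yields a mean-zero function with $Q_\Sigma<0$, contradicting $\Ind_0(\Sigma)=0$. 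Hence $\Sigma$ is stable, so a plane by do~Carmo--Peng / Fischer-Colbrie--Schoen --- again contradicting $|A_\Sigma|(0)=1$.

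I expect the main obstacle to be the behaviour of $\Ind_0$ under the rescaling--restriction--limit process: unlike the ordinary index, it carries the moving constraint $\int_M u=0$, which is destroyed both by passing to subdomains and by pulling functions back along $\widetilde M_k\to\Sigma$. Each time this occurs the relevant test functions must be corrected by a small bump placed far out on the limit, exploiting that a complete minimal hypersurface through the origin in $\R^n$ has infinite area; in particular the semicontinuity of $\Ind_0$ invoked above relies on this device. Once this bookkeeping is in place, the point-selection, the smooth compactness of the blow-ups, and the appeals to the Euclidean Bernstein theorems are routine.
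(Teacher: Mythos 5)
Your proposal is correct and follows the same overall strategy as the paper: argue by contradiction, select a blow-up point and scale, rescale to obtain a complete, non-flat, embedded minimal hypersurface $\Sigma \subset \R^n$ inheriting the appropriate stability, and contradict a Bernstein-type rigidity theorem. In case (ii), $4\le n\le 7$, the two arguments coincide (CMC monotonicity gives Euclidean volume growth, $\Ind=0$ passes to the limit, and Schoen--Simon forces $\Sigma$ to be a plane).

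The genuine divergence is in case (i), $n=3$. The paper simply invokes L\'opez--Ros, whose theorem already states that a complete noncompact orientable minimal surface in $\R^3$ with $\Ind_0 = 0$ (weak stability) must be a plane, and passes $\Ind_0=0$ to the limit in one line. You instead rebuild this conclusion from scratch: first passing $\Ind\le 1$ to $\Sigma$ via Lemma~\ref{lem:index}, invoking Fischer-Colbrie to get finite total curvature and hence quadratic area growth, passing $\Ind_0=0$ to $\Sigma$, upgrading $\Ind_0(\Sigma)=0$ to $\Ind(\Sigma)=0$ by a far-out normalised bump, and finally applying do~Carmo--Peng / Fischer-Colbrie--Schoen. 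This is a valid route --- it essentially reproduces the key trick inside L\'opez--Ros's own argument --- but it is longer and proves nothing beyond what the cited theorem supplies off the shelf. Your worry about the semicontinuity of $\Ind_0$ along the rescaling/limit is a legitimate point that the paper silently steps over, but the fix is lighter than you suggest: after pulling back a compactly supported competitor $u$ with $\int_\Sigma u=0$ to $\widetilde M_k$, the defect $\int_{\widetilde M_k} u_k$ tends to $0$, so a \emph{fixed} correction function of disjoint support absorbs it with a vanishing coefficient; no area growth is needed at this step. The quadratic area growth (and infinite total area) you carefully set up is genuinely required only for your $\Ind_0\Rightarrow\Ind$ upgrade, a step the paper avoids altogether by citing L\'opez--Ros directly.
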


\begin{proof} 
The proof is by contradiction, so we suppose that we have a sequence of $H$-hypersurfaces $\{M_k\}_{k\in \N}$, $p_k\in M_k$ and $\rho_k>0$ such that  $M_k\not \subset B^N_{\rho_k}(p_k)$ and 
\[
\rho_k |A_k|(p_k)\geq k,
 \]
 where  $|A_k|$ is the norm of the second fundamental form of $M_k$.
Abusing the notation, let $M_k$ denote the connected component of $M_k\cap B^N_{\rho_k}(p_k)$ containing $p_k$ and let 
\[
a_k:=|A_k|(q_k)\dist_N(q_k,\partial B^N_{\rho_k}(p_k)) = \max_{q\in M_k} |A_k(q)|\dist_N(q,\partial B^N_{\rho_k}(p_k))\geq |A_k|(p_k)\rho_k \geq k.
\]
Using the notation $d_k=\dist_N(q_k,\partial B_{\rho_k}(p_k))$, we rescale $B^N_{d_k}(q_k)$ by $|A_k|(q_k)$ and denote by $\widetilde M_k$ the scaled connected component of $M_k \cap B^N_{d_k}(q_k)$ containing $q_k$, where the scaling is done in geodesic coordinates with origin at $q_k$. Note that $d_k$ is bounded and since $|A_k|(q_k)\to\infty$ and $a_k:=d_k|A_k|(q_k)\to \infty$,  then $\widetilde{M}_k$ is a sequence of  CMC hypersurfaces in $B_{a_k}(0)$ equipped with metrics $g_k$  which converge in $C^2$ to the Euclidean metric and whose mean curvature $\widetilde{H}_k = |A_k|(p_k)^{-1}H$ converges to $0$. 
Moreover, $|\widetilde{A}_k(0)|\equiv 1$ for all $k$ and for $z\in B_{\fr{a_k}{2}}(0)$ we have that $|\widetilde{A}_k(z)| \leq 2$. Furthermore $\Ind_0 (\widetilde{M}_k\cap B_{\fr{a_k}{2}}(0))=0$ when $n=3$ and $\Ind (\widetilde{M}_k\cap B_{\fr{a_k}{2}}(0))=0$ when $3<n\leq 7$. 

Thus, after passing to a subsequence,  $\widetilde{M}_k$ converges (locally uniformly) in $C^2$ to some complete minimal surface $\widetilde{M}_\infty$ embedded in $\R^{n}$ with $\Ind_0(\widetilde M_\infty)=0$ in case $n=3$ and $\Ind(M_\infty)=0$ in case $3<n\le 7$. For the case $n=3$, by Lopez--Ros \cite{lor2}, $M_\infty$  is a plane, contradicting  that $|A_\infty (0)| = 1$. In case $3<n\le 7$, $\widetilde{M}_\infty$ is a stable minimal surface which, by the monotonicity formula (applied to each $\widetilde M_k$) and the assumption on the $\h^{n-1}$-measure, has  Euclidean volume growth. Therefore, the curvature estimates of Schoen--Simon \cite{SS81} imply that $M_\infty$ must be a plane which contradicts that $|A_\infty (0)| = 1$.
\end{proof}

\begin{remark}\label{sregrem}
The estimates for the norm of the second fundamental form in (ii) of Lemma~\ref{sreg} also hold when $\Ind_0(M)=0$~\cite{bcw18}. The proof follows from the same scaling argument once the authors prove that the hyperplane is the only complete connected oriented stable minimal hypersurface embedded in $\mathbb{R}^n$ that has Euclidean area growth and no singularities. We note that in~\cite{bcw18} our notion of being stable with respect to volume preserving variations is referred to as weak stability. We also note that a key ingredient in proving this characterization of the hyperplane is the fact that a complete connected oriented stable minimal hypersurface immersed in $\mathbb{R}^n$  is one ended~\cite{chchzh}.
\end{remark}

   \begin{definition} \label{def:lbsf}  Let $U$ be an open set in $N$ and let $\{M_k\}_{k\in \N}$ be a sequence of $H$-hypersurfaces in $N$.
We say that the sequence $\{M_k\}_{k\in \N}$  has
{\em locally bounded norm of the second fundamental form in $U$} if for each
compact set $B$ in $U$, 
\[
\sup_k\sup_{M_k\cap B}|A_{M_k}|<\infty
\]
where $|A_{M_k}|$ is
the norm of the second fundamental form of  $M_k$.
\end{definition}

\begin{definition}\label{def:singset}
 Let $\{M_k\}_{k\in \N}$ be a sequence of $H$-hypersurfaces in $N$. A closed set $\Delta\subset N$ is called a {\em singular set of convergence} if, after passing to a subsequence and reindexing, we have the following.
 \begin{itemize}
 \item For any $q\in \Delta$, $\rho>0$ and $n\in\N$, $\sup_k \sup_{M_k\cap B^N_\rho(q)}|A_{M_k}|>n$;
 \item $\{M_k\}_{k\in \N}$  has 
locally bounded norm of the second fundamental form in $N\setminus\Delta$. 
\end{itemize}
A point $q\in \Delta$ will then be called a {\em singular point of convergence}.
\end{definition}

Note that $\Delta$, as in Definition \ref{def:singset}, is not uniquely defined. However, when  $\{M_k\}_{k\in \N}$ does not have
 locally bounded norm of the second fundamental form in $N$, we can always construct a singular set, for instance as follows. For each $k\in \N$, let the maximum of the norm of the second fundamental form $|A_{M_k}|$ of $M_k$ be achieved at a point $p_{1,k}\in M_k$.  After choosing a subsequence and reindexing, we obtain a sequence $M_{1,k}$
such that the points $p_{1,k}\in M_{1,k}$ converge to a point $q_1\in N$.  Suppose the sequence of hypersurfaces $M_{1,k}$ fails to have locally bounded norm of the second fundamental form in $N\setminus\{q_1\}$.
Let $q_2\in N\setminus\{q_1\}$ be a point that is furthest away from $q_1$ and such that,
after passing to a subsequence $M_{2,k}$,  there exists a sequence of points $p_{2,k}\in M_{2,k}$
converging to $q_2$ with $\lim_{k\to\infty}A_{M_{k,2}}(p_{2,k})=\infty$. If the sequence of
hypersurfaces $M_{2,k}$ fails to have locally bounded norm of the second fundamental form in $N\setminus\{q_1,q_2\}$, then
let $q_3\in N\setminus \{q_1,q_2\}$ be a point in $N$ that is furthest away from $\{q_1, q_2\}$
and such that, after passing to a subsequence,  there exists a sequence of
points $p_{3,k}\in M_{3,k}$ converging to $q_3$ with $\lim_{n\to\infty}A_{M_{k,3}}(p_{3,k})=\infty$.
Continuing inductively in this manner and using a diagonal-type argument, we obtain
after reindexing,
a new subsequence $M_k$ (denoted in the same way) and
a countable (possibly finite) non-empty set $\Delta':=\{q_1,q_2, q_3,\dots \}\subset N $
such that the following holds. For every $i\in \N$, there exists an integer $N(i)$ such that for all $k\geq N(i)$ there
exist points $p(k,q_i)\in M_k\cap B^N_{1/k}(q_i)$ where $A_{M_k}(p(k,q_i))>k$.
We let $\Delta$ denote the closure of $\Delta'$ in $N$. It follows
from the construction of $\Delta$ that
the sequence $M_n$ has locally bounded norm of the second fundamental form in $N\setminus \Delta$.

In light of the previous discussion, given a sequence $\{M_k\}_{k\in \N}$ of $H$-hypersurfaces in $N$, after possibly replacing it with a subsequence, we will consider $\Delta$ to be a well-defined singular set of convergence, as in Definition~\ref{def:singset}.

\begin{lemma}\label{curvest} 
Let $\{M_k\}_{k\in \N}$ be a sequence of $H$-hypersurfaces with $\sup_k \Ind_0(M_k) <\infty$ and assume that either $n=3$ or $n\le 7$ and for any open $B\In\In N$ there exists a constant $\mu_B$ such that $\sup_k\h^{n-1}(M_k\cap B)<\mu_B$.
Then, up to subsequence there exists a finite singular set of convergence $\Delta$ with $|\Dl|\leq \sup_k\Ind_0(M_k)+1$. Moreover, there exists a constant $C$ such that for any open $B\In \In N\setminus \Delta$
\[
\lim_{k\to \infty}\sup_{M_k\cap B}|A_{M_k}|\leq \frac{C}{\dist_N(B,\Delta)}.
\]
\end{lemma}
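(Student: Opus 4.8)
The plan is to combine a standard point-picking/blow-up argument with the index bound to show that the singular set has at most $\sup_k\Ind_0(M_k)+1$ points, and then to upgrade the local curvature bound near $\Delta$ to the quantitative estimate via Lemma~\ref{sreg}. First I would invoke the discussion preceding the lemma to extract a subsequence and a well-defined singular set of convergence $\Delta$, a priori only countable. The key point is to bound $|\Delta|$. Suppose for contradiction that $\Delta$ contains at least $m:=\sup_k\Ind_0(M_k)+2$ distinct points $q_1,\dots,q_m$. Choose $r>0$ small enough that the geodesic balls $B^N_{2r}(q_i)$ are pairwise disjoint. By definition of $\Delta$, for each $i$ there is (after passing to a further subsequence) a sequence $p_{i,k}\in M_k\cap B^N_{r}(q_i)$ with $|A_{M_k}|(p_{i,k})\to\infty$. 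Running the blow-up argument from the proof of Lemma~\ref{sreg} around each $q_i$ — rescaling $B^N_{d_{i,k}}(\tilde q_{i,k})$ by the maximal (distance-weighted) value of $|A_{M_k}|$ on $M_k\cap B^N_r(q_i)$ — produces, for each $i$, a nonflat complete minimal hypersurface in $\R^n$; in particular, for $k$ large the rescaled piece of $M_k$ in $B^N_r(q_i)$ has $\Ind_0\ge 1$ when $n=3$ (respectively $\Ind\ge1$ when $n\ge4$), since a nonflat limit forces a point of strict instability which, after localizing with a cutoff, survives on the rescaled surface. Here is where the area hypothesis is used when $n\ge4$: the uniform local area bound $\sup_k\h^{n-1}(M_k\cap B)<\mu_B$ guarantees, via the monotonicity formula applied to each $\widetilde M_k$, that the blow-up limits have Euclidean volume growth, so that the Schoen--Simon estimates apply exactly as in Lemma~\ref{sreg}.

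Now I would add up the instabilities across the $m$ disjoint balls. Since the $B^N_{2r}(q_i)$ are pairwise disjoint, test functions supported in distinct balls have disjoint supports, so the negative subspaces they generate are linearly independent and their direct sum is a subspace of $W^{1,2}_0(M_k)$ on which $Q$ is negative definite; this shows $\Ind(M_k\cap \bigcup_i B^N_{2r}(q_i))\ge m$, and a standard argument (removing one linear constraint $\int u=0$ in total, as in the proof of Lemma~\ref{lem:index}, or more carefully one constraint per ball but then summing) gives $\Ind_0(M_k)\ge m-1=\sup_k\Ind_0(M_k)+1$ for $k$ large, a contradiction. (One must be slightly careful about the single-constraint bookkeeping: the cleanest route is to observe that having $\Ind$ at least $m$ on disjoint balls already forces $\Ind_0\ge m-1$ by one application of the constraint-removal lemma to the whole direct sum, which is enough.) Hence $\Delta$ is finite with $|\Delta|\le \sup_k\Ind_0(M_k)+1$; in particular $\Delta$ is closed and the preceding construction already gives locally bounded second fundamental form on $N\setminus\Delta$, so $\Delta$ is genuinely a singular set of convergence.

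For the quantitative estimate, fix an open $B\Subset N\setminus\Delta$ and set $\rho:=\tfrac12\dist_N(B,\Delta)>0$. For $p\in M_k\cap B$ and $k$ large, $B^N_\rho(p)\cap\Delta=\emptyset$, so by the curvature bound on $N\setminus\Delta$ the second fundamental forms $|A_{M_k}|$ are uniformly bounded on $\overline{B^N_\rho(p)}$; standard elliptic compactness then lets us pass to a smooth limit on $B^N_\rho(p)$. If $M_k\not\subset B^N_\rho(p)$ we want to apply Lemma~\ref{sreg}: for this we need $\Ind_0(M_k\cap B^N_\rho(p))=0$ (case $n=3$) or $\Ind(M_k\cap B^N_\rho(p))=0$ together with the uniform density ratio bound (case $n\ge4$). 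The density ratio bound on $B^N_\rho(p)$ follows from the local area hypothesis with $\mu=\rho^{-(n-1)}\mu_{B^N_{2\rho}(p)}$ (after noting $\rho$ is bounded on $N$). The vanishing of the index on $B^N_\rho(p)$ is where the finiteness of $\Delta$ is used once more: if infinitely many of the balls $B^N_\rho(p_k)$ (with $p_k\in M_k\cap B$, $p_k\to p_\infty\in\overline B$) had $\Ind_0\ge1$, we could again build, for each such ball, a negative direction for $Q$; but we can choose finitely many points in $\overline B$ whose $\rho$-balls cover $\overline B$ with bounded overlap, and repeating this along the sequence would produce an unbounded number of independent negative directions spread over a bounded region, forcing $\sup_k\Ind_0(M_k)=\infty$ — contradiction. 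Hence for $k$ large $\Ind_0(M_k\cap B^N_\rho(p))=0$ for every $p\in M_k\cap B$, Lemma~\ref{sreg} applies and yields $|A_{M_k}|(p)\le C/\rho = 2C/\dist_N(B,\Delta)$, and taking $\limsup_k$ gives the claimed bound (relabeling $2C$ as $C$). The main obstacle, and the step I would be most careful about, is the linear-algebra/cutoff bookkeeping in both places where index is converted into independent negative directions: ensuring the volume-preserving constraint costs only one dimension in total rather than one per region, and ensuring the cutoff functions used in the blow-up genuinely produce strictly negative $Q$ on the \emph{un-rescaled} $M_k$ for all large $k$ (this is where the $C^2$ convergence of the rescaled metrics to the Euclidean one, and of $\widetilde M_k$ to a nonflat minimal limit, must be used quantitatively).
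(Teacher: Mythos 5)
Your first part — bounding $|\Delta|$ by $\sup_k\Ind_0(M_k)+1$ via the contrapositive of Lemma~\ref{sreg}, pairwise disjoint balls, and a single application of the constraint-removal relation $\Ind_0\geq \Ind -1$ — is correct and matches the paper's argument in both strategy and bookkeeping. The problem is in the second part.

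There is a genuine gap in your proof of the quantitative estimate $\limsup_k\sup_{M_k\cap B}|A_{M_k}|\leq C/\dist_N(B,\Delta)$. You want to apply Lemma~\ref{sreg} on balls $B^N_\rho(p)$ with $\rho=\tfrac12\dist_N(B,\Delta)$ for arbitrary $p\in M_k\cap B$, which requires $\Ind_0(M_k\cap B^N_\rho(p))=0$ (resp.\ $\Ind(M_k\cap B^N_\rho(p))=0$). But this is \emph{not} a consequence of $\Delta$ being the singular set of convergence, and it is false in general: the limit $V$ may itself be unstable on a compact piece of $N\setminus\Delta$, and by smooth convergence this instability is inherited by $M_k$ for $k$ large, so that $\Ind_0(M_k\cap B^N_\rho(p))\geq 1$ at such scales even though $|A_{M_k}|$ stays bounded there. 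Moreover your proposed argument for this stability does not close: taking $p_k\to p_\infty$ with $\Ind_0(M_k\cap B^N_\rho(p_k))\geq 1$ gives balls that, for large $k$, are essentially the \emph{same} fixed ball around $p_\infty$, hence contribute at most one negative direction, not unboundedly many; the finite cover of $\overline{B}$ likewise yields only finitely many potential negative directions. So no contradiction with $\sup_k\Ind_0(M_k)<\infty$ is forced, and the step fails.

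The paper circumvents this by \emph{not} claiming stability everywhere away from $\Delta$. Instead it proves, by a further contradiction argument producing $\sup_k\Ind_0(M_k)+2$ disjoint annuli $B^N_{\varepsilon_\ell}(q_i)\setminus B^N_{\varepsilon_\ell/2}(q_i)$ each unstable, that $M_k$ is eventually (strongly) stable on small enough annuli around each $q_i\in\Delta$. Combining this annular stability with Lemma~\ref{sreg} gives the $C/\dist_N(\cdot,\Delta)$ rate near $\Delta$, while for $B$ with $\dist_N(B,\Delta)$ bounded below the estimate follows directly from the locally bounded norm of the second fundamental form in $N\setminus\Delta$ (built into Definition~\ref{def:singset}), since $N$ is compact. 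You should replace your argument by this annulus-based dichotomy; the rest of your proof can then remain as written.
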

\begin{proof} The proof is similar to that of \cite[Claims 1 and 2]{Ben}. Let $I\in \N$ be such that $\Ind_0(M_k)+1\le I$ for all $k$ and assume that $\Delta $ has at least $I+1$ distinct points $\{q_1,\dots q_{I+1}\}$. Let
\[
\varepsilon<\frac12\min\{\min_{i\ne j} \dist_N(q_i, q_j), \sigma_N \}\,,
\]
where $\sigma_N$ is a lower bound for the injectivity radius of $N$. By Lemma \ref{sreg}, after passing to a subsequence, $\Ind(B_\ve^N(q_i)\cap M_k)>0$, for all $1\le i\le I+1$. Since $\{B_\ve^N(q_i)\}_{i=1}^{I+1}$ are pairwise disjoint we obtain  that $\Ind(M_k)\ge I+1$ and by Lemma \ref{lem:index} $\Ind_0(M_k)+1\ge I+1$,  which is a contradiction.

To prove the curvature estimate, it suffices to show that there exists $\eps_0>0$ and a subsequence (not re-labelled) so that for all $0<\ve\leq \eps_0$  
\begin{equation}\label{eq:stab}	
\lim_{k\to \infty}\Ind((B_\ve^N(q_i)\setminus B_{\ve/2}^N(q_i))\cap M_k)=0 \quad \text{for all $q_i\in \Delta$.}
\end{equation}
This is indeed sufficient, because $M_k$ has locally bounded norm of the second fundamental norm in $N\setminus \Delta$ and \eqref{eq:stab} combined with Lemma \ref{sreg} yields the required curvature estimate. 

To prove \eqref{eq:stab} we argue by contradiction: suppose there exists $q_i\in \Delta$ so that for all $\ve_0>0$, there exists $\eps_1\leq \eps_0$ with $\liminf \Ind((B_{\ve_1}^N(q_i)\setminus B_{\ve_1/2}^N(q_i))\cap M_k)\geq 1$. We can successively apply this statement (setting $\eps_0 = \eps_l/2$ for each later iteration) $I+1$ times to find a sequence $\eps_1, \eps_2, \eps_3, \dots \eps_{I+1}$  satisfying $\eps_{l+1}\leq \eps_l /2$ and $\liminf \Ind((B_{\ve_l}^N(q_i)\setminus B_{\ve_l/2}^N(q_i))\cap M_k)\geq 1$. Once again we have found $I+1$ disjoint sets for which each $M_k$ is unstable and shown $\Ind_0(M_k) \geq I+1$ for all large $k$, a contradiction.

\end{proof}

To study the limiting behaviour of CMC surfaces, we will need the following definition.

\begin{definition}\label{def:ee}
A connected subset $V\In N$ will be called an \emph{effectively embedded} $H$-hypersurface if $V$ is a finite union of smoothly immersed compact connected constant mean curvature hypersurfaces and at any point $p\in V$, there exists $\eps>0$ such that either
\begin{enumerate}
\item $B^N_\eps (p)\cap V$ is a smooth embedded disk, or 
\item $B^N_\eps (p)\cap V$ is the union of two embedded disks, meeting tangentially and whose mean curvature vectors point in opposite directions.
\end{enumerate}
\end{definition}

Let $V$ be an effectively embedded $H$-hypersurface as in Definition \ref{def:ee}.
We will refer to the set of points $p\in V$ satisfying $1.$ of Definition \ref{def:ee} as the regular part of $V$ and we will denote it by $e(V)$\footnote{$e(V)$ standing for the embedded part of $V$}. Note that $e(V)$ is relatively open and splits into a finite number of (mutually disjoint) connected components 
\[e(V)=\cup_{i=1}^L V^i,\] 
each of which is a smooth embedded CMC hypersurface having the same size mean curvature $H$. The set of points satisfying $2.$ of Definition \ref{def:ee} is the singular set of $V$, denoted by $t(V)$\footnote{$t(V)$ for touching set} which is relatively closed, 
and 
\[t(V):=\cup_{i=1}^L \ov{V}^i\setminus V^i.\] Notice that we cannot necessarily rule out $\ov{V}^i$ self-intersecting, however, with this notation we have that if $p\in t(V)$ then there exists $\eps >0$ so that $e(V)\cap B^N_\eps (p)$ splits into two disjoint components $C^i, C^j$ with $C^i\subset V^i$, $C^j\subset V^j$  and $\{\ov{C^i}\}_{i=1,2}$ are the two smooth embedded CMC disks touching tangentially at $p$ with opposite mean curvature vectors. It might happen that $i=j$ if one component $V^i$ self-intersects. 
It is not difficult to check that each $\ov{V}^i$ is individually an immersed, smooth, connected CMC hypersurface which is embedded unless it is self-intersecting.

Below is a definition of convergence that we will be using often in this paper and we will be refering to as \emph{H-convergence}.
\begin{definition}\label{def:conv_CMC}
A sequence $\{M_k\}_{k\in \N}$ of $H$-hypersurfaces \emph{$H$-converges}  to $V=\cup_{i=1}^L \ov{V}^i$, an effectively embedded $H$-hypersurface, with finite multiplicity $(m^1,\dots, m^L)\in \N^L$ if $d_\h (M_k,V)\to 0$ as $k\to\infty$ and if its singular set of convergence $\Delta\subset V$ is finite and whenever $p\in V\setminus \Delta$ the following holds.
\begin{itemize}
\item If $p \in V^i$, then there exists an $\eps >0$ so that $B^N_\eps (p)\cap M_k$ converges smoothly and graphically (normal grpahs) with multiplicity $m^i$, to $B_{\eps}^N(p)\cap V$.  	
\item If $p \in t(V)$, then there exists an $\eps >0$ so that $B^N_\eps (p)\cap M_k$ uniquely partitions into two parts. The first part converges smoothly and graphically, with multiplicity $m^i$, to $\ov C^i$, and the second converges smoothly and graphically, with multiplicity $m^j$, to $\ov C^j$, where $C^i$, $C^j$ are as discussed in the previous paragraph.
\end{itemize}
	
\end{definition}

\begin{remark}
If $\Delta=\emptyset$  then $V=\ov{V}^i$ for some fixed $i$ and the multiplicity of convergence is one, contrary to what happens if we allow the limit to be minimal\footnote{For instance in the standard $S^3 = \{x\in \R^4 : |x|=1\}$, if $S^2=\{x_4=0\}\cap S^3$ is a great sphere, the equidistant surfaces $M_k$ defined by $M_k=\{x_4:=1/k\}$ are CMC spheres converging smoothly to $S^2$. If we project this picture to $\R P^3$ then we have a sequence of CMC spheres converging smoothly (so $\Delta=\emptyset$) with multiplicity two to a great $\R P^2$.}. This follows from the fact that all $H$-hypersurfaces are two-sided. Thus over each $\ov{V}^i$ we can write the approaching $M_k$'s globally as graphs - if the multiplicity is larger than one, or there is more than one $\ov{V}^i$,  the $M_k$'s must have been disconnected.  

\end{remark}

Finally, in the next sections, we will also use the following notation. We let $S_0, I_0, V_0>0$ denote  a bound for the absolute sectional curvature, the injectivity radius and the volume of $N$. Given $H>0$, we fix $J_H\in(0,I_0)$ so that for any $\rho \leq J_H$, the geodesic balls $B^N_\rho(p)$ are $H$-convex, that is their boundaries are hypersurfaces whose mean curvature is bigger than or equal to $H$, independently of $p\in N$.

\section{Area estimate and compactness}\label{sec_area}
When $n=3$, we  use the results in Section~\ref{prelim} to prove the following area estimate for $H$-surfaces, $H>0$. 


\begin{thm}\label{thm_area}
Given $\mathcal I\in \mathbb N$ and $H>0$, there exists a constant $\mathcal{A}:=\mathcal{A}(\mathcal{\mathcal I}, N)$ such that if $M$ is an $H$-surface separating $N$ with $\Ind_0(M)\le \mathcal I$, we have that 
\[
\h^2(M)\leq \mathcal{A}.
\]
\end{thm}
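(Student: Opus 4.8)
The plan is to prove the area bound $\h^2(M) \le \mathcal A(\mathcal I, H, N)$ by contradiction: suppose there is a sequence of $H$-surfaces $\{M_k\}$, each separating $N$, with $\Ind_0(M_k) \le \mathcal I$ but $\h^2(M_k) \to \infty$. The key observation is that the mean curvature $H > 0$ gives a lower bound on enclosed volume in terms of area via the first variation / isoperimetric-type reasoning, but $N$ has bounded total volume $V_0$, so large area must force the surface to ``fold back'' on itself many times --- and this folding should be detectable as instability. More precisely, I would exploit the monotonicity-type consequence of $H$-convexity of small geodesic balls: if $M_k$ is strongly stable (or even just stable with respect to volume-preserving variations) inside a ball $B^N_\rho(p)$ of radius $\rho \le J_H$, then by Lemma~\ref{sreg}(i) we get the curvature estimate $|A_{M_k}| \le C/\rho$ there, and hence (via the standard graphical-decomposition argument) $M_k \cap B^N_{\rho/2}(p)$ is a bounded number of graphs over $\rho/2$-discs, giving $\h^2(M_k \cap B^N_{\rho/2}(p)) \le C' \rho^2$ with $C'$ depending only on $N$ and $H$.

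\textbf{Covering and counting argument.} Fix $\rho \le J_H$ small (depending only on $N, H$). Cover $N$ by a controlled number $\mathcal{N} = \mathcal N(N,\rho)$ of balls $B^N_{\rho/2}(p_j)$. If $M_k$ were stable in every $B^N_\rho(p_j)$ we would get $\h^2(M_k) \le \sum_j \h^2(M_k \cap B^N_{\rho/2}(p_j)) \le \mathcal N C' \rho^2 =: \mathcal A_0$, a uniform bound --- contradicting $\h^2(M_k) \to \infty$ once $k$ is large. Therefore, for each large $k$, there is some ball $B^N_\rho(p_{j(k)})$ in which $M_k$ is \emph{not} stable, i.e. $\Ind_0(M_k \cap B^N_\rho(p_{j(k)})) \ge 1$. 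To upgrade this into a contradiction with the \emph{bounded} index $\mathcal I$, I would iterate: after removing the ball carrying one unit of index, the surface still has huge area outside it, so by the same covering/counting argument it must again be unstable in some \emph{disjoint} ball of radius $\rho$ (choosing $\rho$ at each stage so that balls stay pairwise disjoint, decreasing the radius geometrically as in the proof of Lemma~\ref{curvest}). Repeating $\mathcal I + 1$ times produces $\mathcal I+1$ pairwise disjoint balls in each of which $M_k$ is unstable, so $\Ind_0(M_k) \ge \mathcal I + 1$ (using disjointness of supports and the definition of $\Ind_0$), contradicting the hypothesis.

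\textbf{Where the separating hypothesis enters.} The subtlety is that the ``folding forces large area locally'' heuristic needs the surface to genuinely accumulate area somewhere at bounded curvature only when it is stable; but a priori $M_k$ could have large area concentrated near high-curvature regions. This is where $H > 0$ and separation are used: since $M_k$ separates $N$, it bounds a region $\Omega_k$, and the first variation formula applied to the position/distance function together with $H$-convexity of geodesic balls of radius $\le J_H$ prevents $M_k$ from entering a ball $B^N_{J_H}(p)$ and leaving without enclosing a definite amount of volume --- a barrier argument showing each ``sheet'' of $M_k$ through such a ball contributes $\gtrsim \rho^3$ to $\mathrm{vol}(\Omega_k)$ or to $\mathrm{vol}(N \setminus \Omega_k)$. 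Since both volumes are at most $V_0$, the number of sheets through any fixed ball, \emph{summed appropriately}, is controlled --- but if $\h^2(M_k)$ is huge then some ball must be crossed by many sheets, which (again by the stable curvature estimate being violated) means instability there. Making the barrier/volume-counting precise, and in particular getting the local area bound in stable balls to be genuinely \emph{uniform} (independent of how $M_k$ sits globally), is the main obstacle; the rest is the now-standard disjoint-balls index-accumulation trick of \cite{Ben} already used in Lemma~\ref{curvest}.

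\textbf{Conclusion.} Combining the uniform local area bound on stable balls with the index-accumulation argument, we conclude that if $\Ind_0(M_k) \le \mathcal I$ then $\h^2(M_k) \le \mathcal A(\mathcal I, H, N)$, where $\mathcal A$ depends only on $\mathcal I$, the geometry of $N$ (through $S_0, I_0, V_0$), and $H$ (through $J_H$ and the constant $C$ in Lemma~\ref{sreg}). I expect the proof in the paper to organize this as: (1) a local area estimate lemma on regions of controlled index/curvature, (2) the geometric-measure-theoretic barrier input that uses $H > 0$ and separation to bound sheet-counts by volume, and (3) the combinatorial disjoint-ball iteration against $\Ind_0 \le \mathcal I$.
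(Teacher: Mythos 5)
Your proposal correctly identifies the two core inputs that the paper also uses: (a) a local area estimate on balls where the second fundamental form is bounded, whose proof relies on $H>0$ together with the separating hypothesis via a one-sided regular neighbourhood of uniform thickness $\beta$ (so that area $\lesssim \mathrm{vol}(\mathcal U_\beta)/\beta \le \h^3(N)/\beta$), and (b) an index-accumulation argument to show that the surface cannot be unstable in too many disjoint regions. This is essentially the content of Claim~\ref{area51} and the construction underlying Lemma~\ref{curvest}.

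However, there is a genuine gap in how you assemble these pieces, and the paper closes it with a step you omit entirely. Your iteration scheme finds one ball where $M_k$ is unstable, ``removes'' it, and asserts that the remaining area forces instability in a new disjoint ball. The problem is that all (or almost all) of the area $\h^2(M_k)$ could be concentrated inside a fixed ball $B^N_\rho(p_1)$ where the curvature and index blow up --- nothing you have said rules this out, and your local area bound applies only on \emph{stable} balls. In that scenario the iteration finds unstable balls nested at the same point $p_1$ at decreasing scales, never accumulating $\mathcal I+1$ \emph{disjoint} unstable regions, and you never bound $\h^2(M_k)$. The paper avoids this by reorganizing the logic: Lemma~\ref{curvest} is first used to produce a fixed finite singular set $\Delta = \{p_1,\dots,p_l\}$ with $l \le \mathcal I + 1$ outside of which $|A_{M_k}|$ is locally bounded; then Claim~\ref{area51} bounds $\h^2\!\left(M_k \cap \left[N\setminus\bigcup_i B^N_\eps(p_i)\right]\right)$ uniformly; and --- the step your proposal is missing --- the area \emph{inside} each $B^N_\eps(p_i)$ is controlled by the monotonicity formula for surfaces with bounded mean curvature (after isometrically embedding $N\hookrightarrow\R^m$), which compares $\h^2(M_k\cap B^N_\eps(p_i))$ to $\h^2\!\left(M_k\cap \left(B^N_{2\eps}(p_i)\setminus B^N_\eps(p_i)\right)\right)$, an annular region away from $\Delta$ that is already controlled. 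Without some such device to bound the area concentrating near the high-curvature points, your argument does not close.

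A smaller imprecision: the curvature bound $|A|\le C/\rho$ on a stable ball does \emph{not} by itself give a bound on the number of graphical sheets of $M_k\cap B^N_{\rho/2}(p)$ (infinitely many parallel disks of bounded curvature can pass through a ball). The bound on the sheet count comes precisely from the volume argument of Claim~\ref{area51}, so the sentence ``hence (via the standard graphical-decomposition argument) $M_k\cap B_{\rho/2}(p)$ is a bounded number of graphs'' has the dependency backwards: it is the volume comparison, using $H>0$ and separation, that bounds the number of sheets, not the curvature estimate alone.
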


\begin{proof}
We first prove a local area estimate when the norm of the second fundamental form is bounded.

\begin{claim}\label{area51}
Given $\alpha>0$ there exists $\omega:=\omega(\alpha,N)$
such that the following holds. Given $p\in M$ and $\rho<J_H$, if $\sup_{B^N_\rho(p)}|A|<\alpha$ then 
\[
\h^2(M\cap B^N_{\rho\slash 2}(p))< \omega \h^3(N).
\]
\end{claim}
\begin{proof}[Proof of Claim \ref{area51}] Given $\rho<J_H$, the techniques used to prove Lemma 3.1 in~\cite{mt3} give that there exists $\beta:=\beta(\alpha,J_H, S_0)>0$ such that if $M\cap B^N_\rho(p)$ bounds an $H$-convex domain, then $M\cap B^N_{\rho\slash 2}(p)$ has a one-sided regular neighbourhood of fixed size $\beta$.  This means that the collection of geodesics of length $\beta$ starting at $x\in M\cap B^N_{\rho\slash 2}(p)$ and with initial velocity given by $H(x)/|H(x)|$ are pairwise-disjoint, only intersect $M$ at $x$ and therefore foliate a one-sided neighbourhood of $M$. The result is mainly a consequence of the observation that two $H$-surfaces with bounded norm of the second fundamental form which are oppositely oriented and such that one lies on the mean convex side of the other, cannot be too close away from their boundary and this is essentially a consequence of the maximum principle for quasi-linear uniformly elliptic PDEs. Note that this is not true when $H=0$.

Since $M$ is separating in $N$ we do have that $M\cap B^N_\rho(p)$ bounds an $H$-convex domain. 
Let $\mathcal U_\beta$ denote the 1-sided regular neighbourhood of $M\cap B^N_{\rho\slash 2}(p)$ as above. Then,  since the norm of second fundamental form of $M$ is uniformly bounded we can directly relate the area of $M\cap B^N_{\rho\slash 2}(p)$ with the volume of $\mathcal U_\beta$: there exists a constant $\omega:=\omega(\beta)>0$ such that 
\[
\frac{1}\omega \h^2(M\cap B^N_{\rho\slash 2}(p))\leq \h^3(\mathcal U_\beta)\leq \h^3(N).
\]
This finishes the proof of the claim.  
\end{proof}

We now begin the proof of the area estimate. Arguing by contradiction, assume that there exist $\mathcal I\in \mathbb N$, $H>0$, and a sequence of $H$-surfaces $\{M_k\}_{k\in \N}$ such that for all $k\in \N$,  the $H$-surface $M_k$ separates $N$, $\Ind_0(M_k)\le \mathcal I$ and 
\[
\h^2(M_k)>k.
\]

By Lemma \ref {curvest}, after passing to a subsequence, there exists a finite set of points $\Delta:=\{p_1, \dots, p_l\}$, $l\le  \mathcal I+1$, such that the sequence $\{M_k\}_{k\in \N}$  has
 locally bounded norm of the second fundamental form in $N\setminus \Delta$. Since $N$ is compact, applying Claim~\ref{area51} and a covering argument gives that for any $\ve>0$, there exists a constant $V(\ve)$ such that 
\[
\h^2(M_k\cap [N\setminus \bigcup_{i=1}^lB^N_{\ve}(p_i)])<V(\ve).
\]
In order to obtain a contradiction, it remains to show that the area of $M_k\cap B^N_{\ve}(p_i)$, $i=1,\dots, l$ is also bounded, uniformly in $k$. To that end, we will use the monotonicity formula for the area. After isometrically embedding the ambient space $N$ in an Euclidean space $\R^m$, the submanifolds $M_k\subset N\subset \R^m$ have  mean curvature vector fields $H_k=H^N_k+H^{N^\perp}_k$, where $H_k^N$ and $H^{N^\perp}_k$ are the projections of $H_k$ (the mean curvature vector of $M_k\subset \R^m$) onto the tangent and the normal space of $N$ respectively. Note that $|H^N_k|=H$ and $H^{N^\perp}_k$ depends only on the embedding of $N$ and thus its norm is uniformly, in $k$, bounded. We thus have a sequence of submanifolds with uniformly bounded mean curvature, $|H_k|\le c$. Therefore, the area  monotonicity, see for example \cite[17.6]{si1}, yields, for any $p\in \R^m$ and $0<\s<\r$,
\[
e^{c\s}\s^{-2}\h^2(M_k\cap\{x:|x-p|<\sigma\})\le e^{c\r}\r^{-2}\h^2(M_k\cap\{x:|x-p|<\r\})\,.
\]
Since $M_k\subset N$ and the embedding is isometric we obtain
\[
e^{c\s}\s^{-2}\h^2(M_k\cap B^N_\s(p))\le e^{c\r}\r^{-2}\h^2(M_k\cap B^N_\r(p)).
\]
Take now $p$ to be a point in the singular set. Then for small $\ve$ we have
\[
\ve^{-2}\h^2(M_k\cap B^N_\ve(p))\le e^{c\ve}(2\ve)^{-2}\h^2(M_k\cap B^N_{2\ve}(p))\le\frac12\ve^{-2}\h^2(M_k\cap B^N_{2\ve}(p))\,,
\]
which yields
\[
\h^2(M_k\cap B^N_\ve(p))\le \h^2(M_k\cap (B^N_{2\ve}(p)\setminus B^N_\ve(p))).
\]
But now, choosing $\ve$ small enough so that $B^N_{2\ve}(p)\setminus B^N_\ve(p)$ is away from $\Delta$, the right hand side is uniformly bounded by $V(\ve)$ and thus $\h^2(M_k)< (l+1)V(\ve)$.  This contradicts the assumption that $\h^2(M_k)>k$ and finishes the proof of the area estimate.
\end{proof}

\begin{remark}\label{example}
In~\cite{tra5}, Traizet proved for any positive integer $g$, $g\neq 2$, every flat
3-torus  admits connected closed embedded and separating 
minimal surfaces of genus $g$  with arbitrarily large area. Fix $g\neq 2$ and let $M_k$ be a sequence of such minimal surfaces whose area is becoming arbitrarily large. Since the genus is fixed, by the Gauss-Bonnet theorem, the total curvature of $M_k$ is uniformly bounded in $k$. And this gives that the index of $M_k$ is also uniformly bounded in $k$~\cite{ty}. Thus, these examples show that the area estimates do not hold when $H=0$.
\end{remark}

As a corollary of the proof above, if the ambient manifold $N$ has finite fundamental group (e.g. if it has positive Ricci curvature), then the area bound is true without assuming that the $H$-surface $M$ is separating.

\begin{corollary}\label{cor_area}
Given $\mathcal I\in \mathbb N$ and $H>0$, there exists a constant $\mathcal{A}:=\mathcal{A}(\mathcal{\mathcal I}, N)$ such that if $M$ is an $H$-surface in $N$ with $\Ind_0(M)\le \mathcal I$ and $N$ has finite fundamental group, we have that 
\[
\h^2(M)\leq \mathcal{A}.
\]
\end{corollary}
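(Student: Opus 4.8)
The plan is to reduce Corollary~\ref{cor_area} to Theorem~\ref{thm_area} by passing to a finite cover of $N$ in which the lifted surface becomes separating, while controlling both the index and (afterwards) the area under the covering.

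First I would recall that the assumption $\pi_1(N)$ finite means the universal cover $\wt N \to N$ is a finite-sheeted Riemannian covering, say of degree $d$, and $\wt N$ is a closed simply connected Riemannian $n$-manifold. Given an $H$-surface $M \In N$ with $\Ind_0(M)\le \mathcal I$, I would lift $M$ to $\wt N$: let $\wt M = \pi^{-1}(M) \In \wt N$. This $\wt M$ is a smooth embedded hypersurface in $\wt N$ with the same constant mean curvature $H$ (the covering is a local isometry), but it need not be connected; it has at most $d$ connected components $\wt M_1,\dots,\wt M_r$, $r\le d$, each of which is a closed embedded $H$-hypersurface in $\wt N$. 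The key topological point is that \emph{each} component $\wt M_j$ separates $\wt N$: indeed $\wt N$ is simply connected, so $H_1(\wt N;\Z_2)=0$, hence $H_{n-1}(\wt N;\Z_2)=0$ by Poincar\'e duality, and a closed embedded two-sided hypersurface in a manifold with vanishing $H_{n-1}(\cdot;\Z_2)$ is null-homologous mod $2$ and therefore separating. (Every $H$-hypersurface is two-sided, as noted in the excerpt.)

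Next I would control the index of each lifted component. Each component $\wt M_j \subset \wt M$ covers $M$ with some degree $d_j \ge 1$, and $\sum_j d_j = d$. The covering $\pi|_{\wt M_j}: \wt M_j \to M$ is a local isometry, and the Jacobi operator (equivalently the quadratic form $Q$) pulls back: if $u \in \dot W^{1,2}(M)$ with $Q_M(u,u)<0$, then $u\circ\pi \in \dot W^{1,2}(\wt M_j)$ (the mean-zero condition is preserved since $\int_{\wt M_j} u\circ\pi = d_j \int_M u = 0$) and $Q_{\wt M_j}(u\circ\pi, u\circ\pi) = d_j\, Q_M(u,u) < 0$. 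Pulling back a $k$-dimensional negative subspace of $\dot W^{1,2}(M)$ thus produces a $k$-dimensional negative subspace of $\dot W^{1,2}(\wt M_j)$, i.e. $\Ind_0(\wt M_j) \ge \Ind_0(M)$; but what I actually need is the reverse-type bound, so instead I would argue directly that $\Ind_0(\wt M_j) \le \Ind_0(\wt M) \le$ something controlled. The clean route: $\wt M$ (all components together) satisfies $\Ind_0(\wt M) \le d\cdot(\Ind_0(M)+1)$ — more simply, since the deck group $G=\mathrm{Deck}(\wt N/N)$ acts on $\wt N$ by isometries permuting the components of $\wt M$, one can average, or just note crudely that a negative subspace for $Q_{\wt M_j}$, when we forget the mean-zero constraint, has dimension at most $\Ind(\wt M_j)$, and pushing forward/symmetrizing over $G$ bounds $\Ind(\wt M_j)$ in terms of $\Ind(M) \le \Ind_0(M)+1 \le \mathcal I + 1$. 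The cleanest statement I would prove is simply: $\Ind_0(\wt M_j) \le \mathcal I'$ where $\mathcal I' := \mathcal I'(\mathcal I, d)$; indeed, if $E \In \dot W^{1,2}(\wt M_j)$ is negative-definite for $Q$, then composing with local inverses of $\pi$ and summing over the $d_j$ sheets produces an injective linear map to functions on $M$ that are negative for $Q_M$ up to a bounded-dimensional correction (the mean-zero constraint costs at most one dimension, and the pushforward map $(\pi|_{\wt M_j})_*$ applied to $E$ has kernel of dimension at most $(d_j-1)\dim E$... ) — I would tighten this, but morally $\Ind_0$ of each component is bounded by a constant depending only on $\mathcal I$ and $d = |\pi_1(N)|$, which depends only on $N$.

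With these two facts in hand — each $\wt M_j$ is a separating $H$-surface in the \emph{fixed} closed $3$-manifold $\wt N$ (for $n=3$) with $\Ind_0(\wt M_j) \le \mathcal I'$ — I apply Theorem~\ref{thm_area} in $\wt N$ to get $\h^2(\wt M_j) \le \mathcal A(\mathcal I', \wt N)$ for each $j$. Summing, $\h^2(\wt M) \le d\cdot\mathcal A(\mathcal I', \wt N)$. Finally, since $\pi$ is a $d$-sheeted local isometry, $\h^2(\wt M) = d\cdot \h^2(M)$, whence $\h^2(M) \le \mathcal A(\mathcal I', \wt N) =: \mathcal A(\mathcal I, N)$, because $d$, $\wt N$, and hence $\mathcal I'$ all depend only on $N$ and $\mathcal I$. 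That completes the proof.

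The main obstacle I expect is the index bookkeeping in the second step: transferring the bound $\Ind_0(M) \le \mathcal I$ upward to a bound on $\Ind_0$ of the (possibly multiply-covering) components $\wt M_j$ in $\wt N$. The subtlety is that $\wt M_j \to M$ may have degree $>1$, so test functions on $\wt M_j$ do not simply descend; one must either use the deck-group symmetry to symmetrize negative subspaces, or absorb the loss into a constant depending on $d = |\pi_1(N)|$. Since the excerpt already records the elementary relation $\Ind \le \Ind_0 + 1$ (Lemma~\ref{lem:index}) and is content with ineffective constants depending on $N$, a crude bound $\Ind_0(\wt M_j) \le |\pi_1(N)|\,(\mathcal I+1)$ via the covering suffices and avoids any delicate eigenvalue analysis.
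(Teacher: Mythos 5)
Your topological reduction (pass to the finite universal cover $\wt N$, lift $M$, each lifted component is two-sided and $\wt N$ is simply connected so the lift separates) is correct and matches the paper. But the step you yourself flag as "the main obstacle" — transferring the index bound $\Ind_0(M)\le \mathcal I$ to a bound $\Ind_0(\wt M_j)\le \mathcal I'(\mathcal I, |\pi_1(N)|)$ for a $d_j$-sheeted covering component $\wt M_j\to M$ — is a genuine gap, not a loose end. Your proposed mechanism (push forward a negative subspace $E\subset \dot W^{1,2}(\wt M_j)$ by $(\pi|_{\wt M_j})_*$ and count dimensions) does not work: the pushforward can annihilate all of $E$, and even where it does not, $Q_M(\pi_* u,\pi_* u)$ has no sign relation to $Q_{\wt M_j}(u,u)$, since the cross terms between distinct sheets are uncontrolled. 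The "crude bound $\Ind_0(\wt M_j)\le |\pi_1(N)|(\mathcal I+1)$" you appeal to at the end is not an elementary consequence of the covering; in general the index of a finite cover is bounded \emph{below} by the index of the base (pullback preserves negativity), but a clean a priori upper bound of that form is not available without further structure, so you cannot simply invoke Theorem~\ref{thm_area} in $\wt N$ with a controlled index.

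The paper avoids this entirely, and it is worth seeing why. Inspecting the proof of Theorem~\ref{thm_area}, the index bound is used only through Lemma~\ref{curvest} to produce a \emph{finite} singular set $\Delta$ (with $|\Delta|\le \mathcal I+1$) outside of which $|A_{M_k}|$ is locally bounded; the separating hypothesis enters only through Claim~\ref{area51} (the local area bound), which requires that $M\cap B^N_\rho(p)$ bound an $H$-convex region but makes no reference to the index. Since the covering $\Pi:\wt N\to N$ is a local isometry, the curvature bound away from $\Delta$ transfers for free to the lifts $\wt M_k$ away from $\wt\Delta:=\Pi^{-1}(\Delta)$, which has at most $|\pi_1(N)|(\mathcal I+1)$ points; and upstairs each $\wt M_k$ separates, so Claim~\ref{area51} applies in $\wt N$. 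One then finishes exactly as in Theorem~\ref{thm_area} (covering argument plus monotonicity near the finitely many points of $\wt\Delta$). In short: keep the index downstairs, push only the curvature estimate and the separating property upstairs. This sidesteps any need to control $\Ind_0$ under covers, which is precisely where your argument breaks down.
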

\begin{proof}
Since $N$ has finite fundamental group its universal cover $\Pi:\widetilde N\to N$ is a finite covering. $\Pi^{-1}(M)$ is a disjoint collection of $H$-hypersurfaces in $\widetilde{N}$ and we denote by $\widetilde M$ a connected component of $\Pi^{-1}(M)$. Then $\widetilde M$ is an $H$-surface separating $\widetilde N$, because  $\widetilde N$ is simply-connected. We may now reduce to the setting of Theorem~\ref{thm_area}: let $\{M_k\}\In N$ be a sequence of $H$-hypersurfaces with index uniformly bounded by $\mathcal{I}$. By Lemma \ref {curvest}, after passing to a subsequence, there exists a finite set of points $\Delta:=\{p_1, \dots, p_l\}$, $l\le  \mathcal I+1$, such that the sequence $\{M_k\}_{k\in \N}$  has
 locally bounded norm of the second fundamental form in $N\setminus \Delta$. Thus picking connected lifts $\widetilde{M}_k \In \widetilde{N}$ we have that $\widetilde{M}_k$ are separating and there exists a finite set of points $\widetilde\Delta:=\{\widetilde{p}_1, \dots, \widetilde{p}_L\}$, $L\le  |\pi_1(N)|(\mathcal I+1)$, such that the sequence $\{\widetilde{M}_k\}_{k\in \N}$  has
 locally bounded norm of the second fundamental form in $\widetilde{N}\setminus \widetilde\Delta$. We can now apply Claim \ref{area51} to $\widetilde{M}_k\In \widetilde{N}$ and follow the remaining parts of the proof of Theorem \ref{thm_area} to conclude the proof of the corollary.  \end{proof}

Thanks to the area estimate, an elegant compactness result for $H$-surfaces separating $N$ now follows. 

 \begin{theorem}\label{thm:conv} Given $H>0$, let $\{M_k\}_{k\in \N}$ be a sequence of $H$-surfaces such that, for all $k\in \N$, $M_k$ separates $N$ (or not necessarily separating if $|\pi_1(N)|<\infty$) and $\sup_k \Ind_0(M_k)<\infty$. 
 Then, there exists an effectively embedded $H$-surface $M_\infty$  such that, after passing to a subsequence, $\{M_k\}_{k\in \N}$ H-converges with multiplicity one to $M_\infty$, where the convergence is as in Definition \ref{def:conv_CMC}.
 

 \end{theorem}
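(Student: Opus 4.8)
\textbf{Proof plan for Theorem~\ref{thm:conv}.}
The plan is to combine the curvature estimates of Lemma~\ref{curvest} (which applies once we know a uniform area bound) with the regularity theory for stable CMC hypersurfaces (Lemma~\ref{sreg}) and a Hausdorff-limit analysis to produce the effectively embedded limit $M_\infty$ and verify $H$-convergence with multiplicity one. First I would invoke Theorem~\ref{thm_area} (or Corollary~\ref{cor_area} in the non-separating, finite-$\pi_1$ case) to obtain $\sup_k \h^2(M_k) \le \mathcal A$, so that the hypotheses of Lemma~\ref{curvest} are met. Passing to a subsequence, Lemma~\ref{curvest} yields a finite singular set $\Delta = \{p_1,\dots,p_l\}$ with $l \le \sup_k \Ind_0(M_k) + 1$ and locally uniform bounds on $|A_{M_k}|$ away from $\Delta$; combined with the uniform area bound and the monotonicity formula this gives, away from $\Delta$, uniform local bounds on $M_k$ as graphs over any fixed limiting hypersurface, so standard elliptic theory produces smooth subsequential convergence on $N \setminus \Delta$ to a limit set $\Sigma$ which is a (possibly disconnected, possibly non-embedded along isolated touching loci) collection of smooth CMC hypersurfaces with mean curvature $H$.

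Next I would pass to the Hausdorff limit: since $d_\h$ is compact on closed subsets of $N$ and the $M_k$ have bounded area, a further subsequence gives $M_k \to V$ in $d_\h$, and $V = \overline{\Sigma}$ since $\Delta$ is finite (each $p_i$ is a limit of points of $M_k$, hence lies in $V$, and by the curvature estimate near $\partial B_\eps(p_i)$ the surfaces $M_k$ extend across so $V \supseteq \overline{\Sigma}$; conversely $V \subseteq \overline{\Sigma} \cup \Delta \subseteq \overline{\Sigma}$). The key structural point is that at each $p \in \Delta$, the removable-singularity/regularity argument — using that $\Delta$ is contained in the non-embedded part, via the stability of $M_k$ on small punctured balls $B_\eps(p_i) \setminus B_{\eps/2}(p_i)$ established in the proof of Lemma~\ref{curvest}, together with Lemma~\ref{sreg} and the classical removable singularity theory for stable CMC surfaces — shows $\Sigma$ extends smoothly across $p_i$ either as an embedded sheet or as two sheets meeting tangentially with oppositely-pointing mean curvature vectors (the two-sidedness of $H$-hypersurfaces forces this dichotomy, since two nearby CMC graphs with the same orientation of mean curvature vector satisfy a maximum principle and cannot touch). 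This is precisely the content of Definition~\ref{def:ee}, so $M_\infty := V$ is an effectively embedded $H$-hypersurface, and $\Delta \subseteq t(M_\infty)$.

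For the multiplicity statement: over each component $V^i$ of the regular part $e(M_\infty)$, the smooth graphical convergence away from $\Delta$ expresses $M_k$ near $V^i$ as a finite union of normal graphs; since every $H$-hypersurface is two-sided, each such graph is a globally defined function over $V^i$, and if the multiplicity $m^i$ exceeded one — or if $e(M_\infty)$ had more than one component contributing — then for large $k$ the surface $M_k$ would be disconnected (the distinct graphical sheets cannot be joined: near $\Delta$ the curvature estimate forbids merging, and near $t(M_\infty)$ the two sheets have opposite mean curvature vectors so belong to different sheets of the normal graph decomposition), contradicting connectedness of $M_k$. Hence $m^i = 1$ for the unique relevant component and $M_k$ $H$-converges to $M_\infty$ with multiplicity one in the sense of Definition~\ref{def:conv_CMC}. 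The main obstacle I anticipate is the removable-singularity analysis at the points of $\Delta$: one must rule out that a genuine minimal bubble (with higher topology) obstructs the smooth extension, and here the strict positivity $H>0$ is essential — it forces any blow-up limit at $\Delta$ to be minimal with the ambient $H$-sheets approaching from definite sides, which together with the stability on annuli and Lemma~\ref{sreg} pins down the local picture as (at worst) two tangentially-touching CMC disks; making this rigorous requires care with the interplay between the scales $\eps/2 < |x - p_i| < \eps$ and the curvature concentration at $p_i$.
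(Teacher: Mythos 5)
Your overall structure up to the effectively-embedded limit is in the same spirit as the paper (Lemma~\ref{curvest} plus Theorem~\ref{thm_area}/Corollary~\ref{cor_area}, then a blow-up/tangent-cone analysis near $\Delta$ — the paper makes the removable-singularity step precise via the refined decay $|A_{M_\infty}|(p)\leq \delta/\dist_N(p,q_i)$ of Claim~\ref{dcurvest}, after which it cites \cite{wh5} and \cite[Theorem 4.3]{Ao}, but your sketch is roughly compatible with this). The genuine gap is in your multiplicity-one argument.

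You assert that higher multiplicity would force $M_k$ to be disconnected because ``the distinct graphical sheets cannot be joined: near $\Delta$ the curvature estimate forbids merging.'' This is false, and it inverts the actual picture. The curvature estimate of Lemma~\ref{curvest} only gives $|A_{M_k}|(x)\leq C/\dist_N(x,\Delta)$, which explicitly permits $|A_{M_k}|$ to blow up along a sequence approaching $\Delta$; indeed, whenever $\Delta\neq\emptyset$ the sheets \emph{do} merge through $\Delta$ via catenoidal necks (this is the entire content of the bubbling theorem in Section~\ref{sec_bubb}). So the disconnectedness argument that settles the case $\Delta=\emptyset$ (and is exactly the paper's remark after Definition~\ref{def:conv_CMC}) does not extend across singular points: sheets on opposite sides of $\Delta$ can be, and are, connected by a neck for large $k$. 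Ruling out multiplicity $\geq 2$ therefore requires showing that \emph{only one} neck can form at each $y\in\Delta$, and then that $y\in t(V)$ rather than $e(V)$; the paper establishes this in Theorem~\ref{thm:mult}, whose Claims~\ref{Dtv}--\ref{subset} involve a separation argument in $B_\delta^N(y)$, a min-max/replacement construction of a stable CMC barrier $\Gamma$ in the region $W$ trapped between two unstable sheets (using a foliation by $(H+\mu)$-hypersurfaces and the minimization of the functional $F_\mu$), and the curvature estimates for stable CMC hypersurfaces of~\cite{bcw18}. None of this is suggested in your proposal, and your claimed shortcut does not replace it. The paper's own proof of Theorem~\ref{thm:conv} in fact stops at ``effectively embedded'' and defers multiplicity one to Section~\ref{section:multiplicity} precisely because it is the hard part.

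A secondary point: you take $\Delta\subseteq t(M_\infty)$ as a byproduct of the removable-singularity step, but this inclusion is also not free. If $y\in\Delta\cap e(V)$ the limit near $y$ is a single embedded disc, and it is exactly the argument of Claim~\ref{Dtv} (using that each local component $Q_i^y$ separates $B_\delta^N(y)$, so its unit normal would have to flip sign between components of $Q_i^y\cap[B_\delta^N(y)\setminus B_\eps^N(y)]$, contradicting convergence to a single oriented disc) that rules this out. This fact is thus part of the multiplicity analysis rather than a consequence of the tangent-cone picture alone.
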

\begin{proof}
Using the curvature estimate of Lemma \ref{curvest} and the area estimate of Theorem~\ref{main} (or Corollary~\ref{cor_area} if $N$ has finite fundamental group),
  a standard argument yields that away from a finite set of points  $\Delta\subset N$, that is the singular set of convergence (see Definition~\ref{def:singset}), a subsequence $H$-converges with finite multiplicity to a surface $M_\infty$ effectively embedded in $N\setminus\Delta$ with constant mean curvature $H$. 

We next show that $M_\infty\cup \Delta$ is in fact effectively embedded in $N$, which will imply that $\{M_k\}_{k\in \N}$ $H$-converges with finite multiplicity to $M_\infty\cup\Delta$ with $\Delta$ being the singular set of convergence. For this we will need the following claim. We let $\Delta=\{q_1,\dots q_l\}$ and $\ve:=\frac 12 \inf_{i,j=1,\dots, l; i\neq j} \dist_N(q_i,q_j)$.

\begin{claim}\label{dcurvest}
Given $\delta>0$, there exists $0<\rho\leq \ve$ such that for any $q_i\in \Delta$ and $p\in M_\infty\cap B_\rho^N(q_i)$ 
\[
|A_{M_\infty}|(p)\leq \frac{\delta}{\dist_N(p,q_i)}\,.
\]
\end{claim}
\begin{proof}[Proof of Claim \ref{dcurvest}]
Note first that, by the nature of the convergence and Lemma~\ref{curvest},  for any $q_i\in \Delta$ and $p\in M_\infty\cap B^N_\ve(q_i)$ we have
\begin{equation}\label{eq:CEst}
|A_{M_\infty}|(p)\leq \frac{C}{\dist_N(p,q_i)}.
\end{equation}
Moreover, arguing as in \cite[Claim 2]{Ben} taking $\ve$ even smaller if necessary we have that 
each connected component of $M_\infty\cap (B^N_\ve(q_i)\setminus\{q_i\})$ for all $q_i\in \Delta$ is strongly  stable.

To prove the claim we argue by contradiction and suppose that for some $\de>0$ there exist $q\in \Delta$ and a sequence of points $p_k\in M_\infty$ such that $\lim_{k\to\infty}p_k=q$ and 
\[
|A_{M_\infty}|(p_k)> \frac{\de}{\dist_N(p_k,q)}\,.
\]
Consider now scaling $M_\infty$ by $\frac{1}{\dist_N(p_k,q)}$, with the scaling performed in geodesic coordinates and with origin at $q$. Letting $k\to \infty$, and since $\dist_N(p_k,q)\to 0$,  after passing to a subsequence, the scaled surfaces converge to a tangent cone of $M_\infty\cup \{q\}$ at $q$. The convergence is in general weak convergence, however, by the curvature estimate \eqref{eq:CEst} and the comments following it, it is in fact smooth away from the origin and the limit is strongly stable away from the origin. Since the limit is also a stationary cone it must be a plane. This contradicts the fact that there exists a point at distance 1 from the origin with $|A|\ge \de>0$.
\end{proof}

We can now show that $M_\infty\cup \Delta$ is effectively embedded following the ideas of \cite{wh5} (see also \cite[Theorem 4.3]{Ao}).
Let $p\in \Delta$ and $r>0$ be such that $B^N_{2r}(p)\cap \Delta=\{p\}$. Consider a sequence $r_i\to 0$ and denote by $\widetilde M_i$ the scaling of $M_\infty \cap B^N_r(p)$ by $1/ r_i$. Then, the curvature estimates of Claim \ref{dcurvest} yield that,  after passing to a subsequence $\widetilde M_i$ converge to a union of planes. This in turn implies that $M_\infty$ is a union of disks and punctured disks. We can thus argue exactly as in \cite[Theorem 4.3]{Ao} to show that  $M_\infty\cup\Delta$ is indeed effectively embedded.

That the multiplicity of convergence is 1 will be a consequence of the results in Section \ref{section:multiplicity}.
\end{proof}

 The curvature estimates discussed in Section~\ref{prelim} and that were used to prove Theorem~\ref{main} and Theorem~\ref{thm:conv}, crucially depend on a bound for the volume of the $H$-hypersurface when $3<n\leq 7$,. However, if one assumes an a priori volume bound, then the  proof of Theorem~\ref{thm:conv}  can be modified to prove a compactness result in higher dimensions, that is Theorem~\ref{thm:conv2} below.  As in Theorem~\ref{thm:conv}, multiplicity 1 will be a consequence of the results in Section \ref{section:multiplicity}. 
 
 \begin{thmn}[\ref{thm:conv2}] Given $H>0$, let $\{M_k\}_{k\in \N}$ be a sequence of $H$-hypersurfaces in $N$ satisfying
 \[
 \sup_k \h^{n-1}(M_k)<\infty \quad \text{and}\quad \sup_k \Ind_0(M_k)<\infty.
 \]
 Then, there exists a hypersurface $M_\infty$ effectively embedded in $N$ with constant mean curvature $H$, such that, after passing to a subsequence, $\{M_k\}_{k\in \N}$ H-converges with multiplicity one to $M_\infty$, where the convergence is as in Definition \ref{def:conv_CMC}.  

 \end{thmn}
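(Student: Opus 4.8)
\textbf{Proof strategy for Theorem~\ref{thm:conv2}.} The plan is to mirror the proof of Theorem~\ref{thm:conv} verbatim, with the \emph{a priori} volume bound $\sup_k \h^{n-1}(M_k) < \Lambda$ playing the role that the area estimate of Theorem~\ref{thm_area} (and Corollary~\ref{cor_area}) played in dimension $3$. Concretely, first I would invoke Lemma~\ref{curvest} in the case $3 < n \le 7$: since for any open $B \In\In N$ we have $\sup_k \h^{n-1}(M_k \cap B) \le \sup_k \h^{n-1}(M_k) < \infty$, the hypothesis of Lemma~\ref{curvest} is met, so after passing to a subsequence there is a finite singular set of convergence $\Delta = \{q_1, \dots, q_l\}$ with $l \le \sup_k \Ind_0(M_k) + 1$, together with the curvature estimate $\lim_{k\to\infty} \sup_{M_k \cap B} |A_{M_k}| \le C/\dist_N(B,\Delta)$ for $B \In\In N \setminus \Delta$.

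\textbf{Extracting the limit away from $\Delta$.} With locally bounded second fundamental form on $N \setminus \Delta$ and the global volume bound $\Lambda$, a standard compactness argument (Arzel\`a--Ascoli on the local graphical pieces plus a diagonal argument over an exhaustion of $N \setminus \Delta$, using that each $M_k$ is two-sided because $H > 0$) yields a subsequence that $H$-converges with finite multiplicity $(m^1, \dots, m^L)$ on $N \setminus \Delta$ to an effectively embedded $H$-hypersurface $M_\infty$ in $N \setminus \Delta$. The crucial point that $H$-convergence is to an \emph{effectively} embedded object — rather than a merely immersed one — comes exactly as in Theorem~\ref{thm:conv}: limits of embedded CMC hypersurfaces can only fail to be embedded by developing tangential self-touching with oppositely pointing mean curvature vectors (two sheets lying on each other's mean-convex side), which is case $2.$ of Definition~\ref{def:ee}; the maximum principle for the CMC (quasilinear elliptic) equation forbids any worse degeneracy. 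The volume bound guarantees $M_\infty$ has finite volume, and lower semicontinuity gives $\h^{n-1}(M_\infty) \le \Lambda$, so $M_\infty$ has finitely many components, each a smooth immersed CMC hypersurface as required.

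\textbf{Removing the singular set.} Next I would show $M_\infty \cup \Delta$ is effectively embedded across the points of $\Delta$. This is where the proof of Theorem~\ref{thm:conv} is imported essentially unchanged: taking $\ve = \tfrac12 \min_{i \ne j} \dist_N(q_i, q_j)$ and shrinking if necessary, one argues as in \cite[Claim~2]{Ben} that each connected component of $M_\infty \cap (B^N_\ve(q_i) \setminus \{q_i\})$ is strongly stable (in dimension $n > 3$ one uses $\Ind$ rather than $\Ind_0$, which is legitimate by Lemma~\ref{lem:index} and the index-splitting argument at the end of Lemma~\ref{curvest}); then the contradiction/blow-up argument of Claim~\ref{dcurvest} — now invoking the Schoen--Simon curvature estimate Lemma~\ref{sreg}(ii), which applies because the monotonicity formula together with the volume bound $\Lambda$ gives Euclidean volume growth for the rescalings — shows that for every $\delta > 0$ there is $\rho \le \ve$ with $|A_{M_\infty}|(p) \le \delta/\dist_N(p, q_i)$ on $M_\infty \cap B^N_\rho(q_i)$. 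Hence blow-ups of $M_\infty$ at each $q_i$ are unions of planes, so $M_\infty$ near $q_i$ is a union of disks and punctured disks, and the removable-singularity argument of \cite{wh5}, \cite[Theorem~4.3]{Ao} upgrades $M_\infty \cup \Delta$ to a genuine effectively embedded $H$-hypersurface in all of $N$, with $\Delta \subset t(M_\infty \cup \Delta)$ contained in the non-embedded part.

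\textbf{Multiplicity one.} Finally, as stated in the theorem, multiplicity one is deferred to the results of Section~\ref{section:multiplicity}; it is a consequence of $H > 0$ (a graphical limit with multiplicity $> 1$, or with more than one limit sheet, would force the connected $M_k$ to disconnect, since each $M_k$ is two-sided and can be written as a global normal graph over each limiting component), and does not use the ambient geometry. The main obstacle in the whole argument is not any single step but making the passage from $n = 3$ to $3 < n \le 7$ legitimate at each stage where the dimension-$3$ proof silently used $\Ind_0$-stability and the Lopez--Ros estimate: every such instance must be replaced by $\Ind$-stability plus Euclidean-volume-growth plus Schoen--Simon, and the volume growth is precisely what the new hypothesis $\sup_k \h^{n-1}(M_k) < \infty$ (via monotonicity) supplies.
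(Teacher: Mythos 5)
Your proposal follows essentially the same route as the paper: the paper itself only says that ``the proof of Theorem~\ref{thm:conv} can be modified'' by substituting the a priori volume bound for the dimension-$3$ area estimate, and you fill in exactly the steps that change (invoking Lemma~\ref{curvest} with the volume hypothesis, using Schoen--Simon plus Euclidean volume growth in place of Lopez--Ros, and deferring multiplicity one to Section~\ref{section:multiplicity}). The one caveat is your parenthetical explanation of \emph{why} multiplicity one holds (``$M_k$\ldots can be written as a global normal graph over each limiting component''): this two-sidedness argument only applies when $\Delta=\emptyset$, which is the content of the remark after Definition~\ref{def:conv_CMC}. When $\Delta\neq\emptyset$ — which is exactly the case Theorem~\ref{thm:mult} is designed to handle — $M_k$ cannot be written as a global graph over the limit, and the paper instead shows $\Delta\subset t(V)$ (Claim~\ref{Dtv}), then rules out more than one ``neck'' sheet near each singular point (Claim~\ref{disconn}) via an auxiliary CMC minimization producing a stable barrier (Claims~\ref{stableinw}, \ref{subset}). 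Since you explicitly defer to Section~\ref{section:multiplicity}, your proof structure is sound; just be aware that the heuristic you give in parentheses would not by itself substitute for that section's argument.
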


\section{Multiplicity analysis}\label{section:multiplicity}

The main goal of this section, is to show that under certain hypotheses, a sequence of $H$-hypersurfaces that converges to an effectively embedded surface, will in fact  converge with multiplicity one to its limit. This result will complete the proofs of Theorems \ref{thm:conv} and \ref{thm:conv2}.

We first recall that $I_0>0$ denotes  a bound for the injectivity radius of $N$. And that given $H>0$, we have fixed $J_H\in(0,I_0)$ so that for any $\rho \leq J_H$, the ambient geodesic balls $B^N_\rho(p)$ are $H$-convex, independently of $p\in N$. Throughout this section, we will always assume that the radius of an ambient geodesic ball is less than $J_H$.

We will show that even if $\Delta\neq \emptyset$ we must always have multiplicity one convergence: 

\begin{theorem}\label{thm:mult}
Let  $V=\cup_{\ell =1}^L \ov{V}^\ell$ be a hypersurface effectively embedded in $N$ with constant mean curvature $H>0$ and  let $\{M_k\}_{k\in \N}$ be a sequence of $H$-hypersurfaces that H-converges to $V$ with multiplicity $(m^1,\dots, m^L)\in \N^L$. Then the singular set of convergence $\Delta$ lies inside $t(V)$ and $m^\ell = 1$ for all $\ell=1, \dots, L$. \end{theorem}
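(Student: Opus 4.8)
The plan is to prove Theorem~\ref{thm:mult} by a contradiction/maximum-principle argument, exploiting the one feature that distinguishes the CMC setting ($H>0$) from the minimal case: all the $H$-hypersurfaces involved are two-sided, and the mean curvature vectors have a definite direction. First I would reduce to a purely local statement. Fix a point $p$ in a sheet $V^\ell$ (or, if $p\in t(V)$, fix one of the two touching sheets) and choose $\eps>0$ small so that $B^N_\eps(p)\cap M_k$ converges graphically with multiplicity $m^\ell$ to $B^N_\eps(p)\cap V^\ell$, and so that $\eps<J_H$. If $m^\ell\ge 2$, then for $k$ large $B^N_\eps(p)\cap M_k$ contains at least two distinct graphical sheets $u^1_k<u^2_k$ over (a slightly shrunken) $B^N_{\eps/2}(p)\cap V^\ell$, both converging in $C^2$ to the zero graph. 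Since $M_k$ is embedded and connected, and since $V^\ell$ is a fixed smooth CMC hypersurface, the two sheets $u^1_k,u^2_k$ are graphs with the \emph{same} choice of unit normal, hence they have mean curvature $H$ computed with respect to the same normal direction; i.e. the CMC equation for normal graphs over $V^\ell$ is a quasilinear elliptic PDE for which both $u^1_k$ and $u^2_k$ are solutions. I would like to conclude from the strong maximum principle that $u^1_k\equiv u^2_k$, but that is false globally because the two sheets are genuinely different away from $p$ (they are only forced to be close near $p$, where $\Delta$ sits); so this needs to be run more carefully at the level of the whole effectively embedded limit.

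The key step I expect is a global monotonicity/ordering argument across $V$. Here is the structure I would use. Because each $M_k$ is two-sided and connected, multiplicity $\ge 2$ over some component $V^\ell$ means (as noted in the Remark after Definition~\ref{def:conv_CMC}) that $M_k$ itself would be disconnected unless the sheets are ``glued'' through the singular set $\Delta$ — so the extra sheets must interact near points of $\Delta$. I would first argue that $\Delta\subset t(V)$: if $q\in\Delta$ lay in the embedded part $e(V)$, then near $q$ the limit $V$ is a single embedded disk, the convergence $M_k\to V$ is graphical with some multiplicity, and the curvature blow-up at $q$ is incompatible with the curvature estimate of Lemma~\ref{curvest}/Claim~\ref{dcurvest} that forces the rescalings at $q$ to be flat — more precisely, near a point of $e(V)$ the sheets are ordered graphs and the difference of consecutive sheets, suitably normalised, converges to a positive Jacobi field on a disk, which by the Harnack inequality and the strong maximum principle cannot vanish, contradicting that the sheets coincide to infinite order at the (nonexistent, since $q\notin t(V)$) touching point. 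This pins $\Delta$ inside $t(V)$.

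For the multiplicity, I would take the ``outermost'' and ``innermost'' sheets of $M_k$ over a fixed $\ov V^\ell$. Since $M_k$ is connected and two-sided, over $\ov V^\ell$ we may list the graphical sheets $u^1_k\le\cdots\le u^{m^\ell}_k$ (away from $\Delta$); set $w_k=u^{m^\ell}_k-u^1_k\ge 0$, normalise $\tilde w_k=w_k/\sup w_k$. Away from $\Delta$, $\tilde w_k$ converges (subsequentially) to a nonnegative solution $w_\infty$ of the linearised CMC (Jacobi) equation on $V^\ell$, which extends across the punctures in $\Delta$ by the curvature estimates of Claim~\ref{dcurvest} (the rescalings at each $q\in\Delta$ are planar, so the singularities are removable for the Jacobi equation, or at worst $w_\infty$ picks up a controlled logarithmic/bounded behaviour that the strong maximum principle still handles). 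If $w_\infty\not\equiv 0$, the strong maximum principle on the closed hypersurface $\ov V^\ell$ (which has no boundary) forces $w_\infty>0$ everywhere, i.e. the sheets stay uniformly separated — but that contradicts the fact that at a touching point $q\in t(V)\cap\ov V^\ell$ (which exists since $\Delta\neq\emptyset$ is where the sheets must be glued, and $\Delta\subset t(V)$) the sheets of $M_k$ over the two sides merge: the ordering across the touching disks, combined with the fact that the two limiting disks at $q$ have \emph{opposite} mean curvature vectors, forces the top sheet on one side to connect to the bottom sheet on the other, squeezing $w_k\to 0$ at $q$ faster than $\sup w_k$, hence $\tilde w_k(q)\to 0$, contradicting $w_\infty>0$. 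Therefore $w_\infty\equiv 0$, meaning $u^{m^\ell}_k-u^1_k=o(\sup(u^{m^\ell}_k-u^1_k))$, which is only possible if $m^\ell=1$ (if $m^\ell\ge 2$ there genuinely are two distinct sheets and $w_k$ is a nontrivial function, and one rules out $\sup w_k\to 0$ relative to itself being self-contradictory — more precisely, one applies the same argument to \emph{consecutive} sheets $u^{i+1}_k-u^i_k$ and runs a Harnack chain to show they are all comparable, so $w_\infty\equiv 0$ forces all sheets to coincide).

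The main obstacle I anticipate is the analysis precisely \emph{at} the touching points $q\in t(V)$: there the limit $V$ is two disks meeting tangentially with opposed mean curvature vectors, so ``graph over $V^\ell$'' must be set up on each side separately, and one must track how the sheets of $M_k$ on one side connect (through the region near $q$ where curvature blows up, if $q\in\Delta$) to sheets on the other side. Making the ordering/gluing statement rigorous — that connectedness of $M_k$ plus the opposite orientation of the two limiting disks forces the ``top'' sheet on one side to be the ``bottom'' on the other, and that therefore the gap function is forced to $0$ at $q$ — is the crux, and is exactly where $H>0$ is used (for $H=0$ the two limiting sheets would have the \emph{same} normal and the sheets could be parallel copies, permitting higher multiplicity). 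I would handle it by a blow-up at $q$ using Claim~\ref{dcurvest}: the rescaled $M_k$ converge to a catenoid-type neck (a complete embedded minimal hypersurface with the right volume growth and finite index, necessarily a catenoid by Schoen's theorem as flagged in the introduction), a connected surface with two ends, one matching each side; the two ends being a single connected piece is what forces the gluing, hence the collapse of the gap, hence $m^\ell=1$.
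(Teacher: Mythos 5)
Your proposal takes a genuinely different route from the paper and contains gaps that, as far as I can see, are not easily repaired without importing the paper's main technical tool. The paper's proof of $\Delta\subset t(V)$ (Claim~\ref{Dtv}) does not use Jacobi fields at all: it notes that if some $y\in\Delta\cap e(V)$, then by Allard regularity at least one connected component $Q^y_i$ of $M_k\cap B^N_\delta(y)$ must meet the annulus $B^N_\delta(y)\setminus B^N_\eps(y)$ in $\geq 2$ sheets; since $Q^y_i$ is connected, embedded and separates the ball, the sign of $\langle\nu,n_y\rangle$ must alternate between consecutive sheets, which is incompatible with all of them converging to the \emph{single} disk $V\cap B^N_\delta(y)$. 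Your sketch for this step (a normalised gap converging to a positive Jacobi field, ``contradicting that the sheets coincide to infinite order at the nonexistent touching point'') is circular as written: you never derive that a curvature blow-up at $y\in e(V)$ forces two sheets over the same side to touch, and the normalised gap could perfectly well concentrate near $y$ (so the interior limit Jacobi field on the punctured disk is identically zero, consistent with $\sup=1$ being attained near the puncture). The orientation argument the paper uses is exactly what rules this out, and it is not present in your proposal.

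For the multiplicity-one conclusion the real crux, which your proposal identifies correctly as the crux but does not supply, is the uniqueness-of-neck statement: at each $y\in\Delta$ there is precisely one connected component $Q^y_i$ of $M_k\cap B^N_\delta(y)$ that hits the annulus in two sheets (Claim~\ref{disconn}). The paper proves this by a barrier/minimisation construction (Claim~\ref{stableinw}): if two necking components $Q^y_i,Q^y_j$ existed, one minimises a prescribed mean curvature functional $F_\mu$ in the region $W$ trapped between them, using foliation barriers built from the first eigenfunction near the unstable $Q^y_i$ and from a parallel foliation near $Q^y_j$, to produce a \emph{stable} CMC-$H$ surface $\Gamma$ between them with $\partial\Gamma=\partial Q^y_i$; the curvature estimates for stable CMC surfaces then force $\Gamma$ to converge smoothly to $C^\pm$, contradicting the separation properties of $Q^y_j$. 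Once uniqueness of the neck is known, the multiplicity contradiction follows from ordering/separation: if $m^\ell\geq 2$, tracing the second-highest sheet $S^2_k$ into $B^N_\delta(y_j)$ gives a connected component $\Lambda$ that is trapped strictly between the two sheets of the unique neck $Q^1_j$, which is impossible. Your Jacobi/Harnack scheme does not contain an analogue of this step, and the key claim --- that the normalised gap $\tilde w_k$ between the outermost and innermost sheets over $\ov V^\ell$ is forced to zero at $q\in t(V)$ --- is not justified: the catenoid neck at $q$ joins one sheet converging to $C^+$ to one sheet converging to the \emph{opposite-sided} disk $C^-$, so it does not squeeze the two outermost sheets \emph{over the same} $\ov V^\ell$ together. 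Without uniqueness of the neck you have no control on how the sheets redistribute through $\Delta$, and the Harnack-chain argument you propose to upgrade $w_\infty\equiv 0$ to $m^\ell=1$ fails for the same reason: the gap can live entirely in the shrinking neck region where the sheets are not graphs over $V$.
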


\begin{proof}
Since $M_k$ is embedded with uniformly bounded volume and the number of points in $\Delta$ is finite, there exist $0<2\eps<\dl<J_H$ such that for $k$ sufficiently large and $y\in\Delta$, $B_\dl^N(y)\setminus B_\eps^N(y)\cap M_k$ is a collection of $m(y)\geq 1$ graphs  of functions $u_i^y$, $i:=1,\dots, m(y)$, over $V$ which converge smoothly to zero in $k$ (where for simplicity we have omitted the index $k$). If $y\notin t(V)$, let $n_y=H/|H|$ be the unit normal to $V$ at $y$, otherwise let $n_y$ be a choice of unit normal. The graphs of $u_i^y$, $i:=1,\dots ,m(y)$, converge smoothly to $B_\dl^N(y)\setminus B_\eps^N(y)\cap V$ as $k\to \infty$ and can be  ordered by height, say with respect   to $n_y$, so that $u_i^y$ is \emph{above} $u_{i+1}^y$ for $i:=1,\dots ,m(y)-1$.  Let $Q_i^y$ be the connected component of $B_\dl^N(y)\cap M_k$ that contains $\graph u_i^y$.

\begin{claim}\label{Dtv}
$\Delta\subset t(V)$.
\end{claim}
\begin{proof}[Proof of Claim \ref{Dtv}]
 Arguing by contradiction, suppose that $y\in \Delta\cap e(V)$ - so that $y$ lies on an embedded part of the limit. Then, by definition, $V\cap B_\dl^N(y)\subset V^\ell$ is an embedded CMC disc and the collection of $\graph u_i^y$, $i:=1,\dots ,m(y)$, converges to $V\cap [B_\dl^N(y)\setminus B_\eps^N(y)]$.  
 
 If for all $i:=1,\dots, m(y)$, $Q_i^y\cap [B_\dl^N(y)\setminus B_\eps^N(y)]=\graph u_i^y$ (i.e. $Q_i^y\cap [B_\dl^N(y)\setminus B_\eps^N(y)]$ is connected)  then since $Q_i^y$ converges to the disc $V\cap B_\dl^N(y)$  as Radon measures with multiplicity one, by Allard's regularity theorem \cite{al1} the convergence is smooth and $y\not \in \Delta$. Therefore, there exists $i\in\{1,\dots m(y)\}$, such $Q_i^y\cap [B_\dl^N(y)\setminus B_\eps^N(y)]$ consists of more than one connected components. However, note that because $Q_i^y$ separates $B_\dl^N(y)$, the sign of the inner product between the unit normal to $Q_i^y$ and $n_y$ must change as we alternate components of $Q_i^y\cap [B_\dl^N(y)\setminus B_\eps^N(y)]$. This contradicts the fact that such components must converge to a single CMC disc $V\cap B_\dl^N(y)$. This contradiction proves that $\Delta\subset t(V)$.
\end{proof}

It remains to prove that the convergence to $V$ is with multiplicity one. Let $y\in \Delta\subset t(V)$, then $B^N_\dl (y)\cap V$ is the union of two embedded discs, $C^\pm$  meeting tangentially and whose mean curvature vectors point in opposite directions. Without loss of generality, we pick $n_y=H^+/|H^+|$ where $H^+$ is the mean curvature of $C^+$ and thus so that $C^+$ lies above $C^-$, in the sense discussed in the first paragraph of the proof. The collection $\graph u_i^y$, $i:=1,\dots ,m(y)$, converging smoothly to $B_\dl^N(y)\setminus B_\eps^N(y)\cap V$ as $k\to \infty$ can be divided into two distinct finite collections of graphs $\Delta_+$ and $\Delta_-$ that satisfy the following properties:
\begin{itemize}
\item the graphs in $\Delta_+$ are above the graphs in $\Delta_-$; 
\item the collection $\Delta_+:=\{\graph u_{i,+}^y\,,\,\, i:=1,\dots, m_+(y)\}$, converges smoothly to $C^+\cap [B_\dl^N(y)\setminus B_\eps^N(y)]$ as $k\to \infty$;
\item the collection $\Delta_-:=\{\graph u_{i,-}^y\,,\,\ i:=m_+(y)+1,\dots m_-(y)\}$, converges smoothly to $C^-\cap [B_\dl^N(y)\setminus B_\eps^N(y)]$ as $k\to \infty$.
\end{itemize}

Recall that $Q_i^y$ is the connected component of $B_\dl^N(y)\cap M_k$ that contains $\graph u_i^y$. Just like we observed before, if $Q_i^y\cap [B_\dl^N(y)\setminus B_\eps^N(y)]$
consists of more than one connected component, since $Q_i^y$ separates $B_\dl^N(y)$, then the sign of the inner product between the unit normal to $Q_i^y$ and $n_y$ must change as we alternate component of $Q_i^y\cap [B_\dl^N(y)\setminus B_\eps^N(y)]$. This implies that alternating components must alternating convergence to $u^y_+$ and $u^y_-$. This gives that if $Q_i^y\cap [B_\dl^N(y)\setminus B_\eps^N(y)]$
consists of more than one connected component, then it consists of exactly two components, one in $\Delta_+$ and the other in $\Delta_-$. And $Q_i^y$ converges to $B^N_\dl (y)\cap  V$ on compact subsets of $B^N_\dl (y)\setminus \{y\}$ with multiplicity 1.

\begin{claim}\label{disconn}
There is only one $Q_i^y$ such that $Q_i^y\cap [B_\dl^N(y)\setminus B_\eps^N(y)]$ is disconnected.
\end{claim}
\begin{proof}[Proof of Claim \ref{disconn}]
 Arguing by contradiction, assume that $Q_j^y$, $i\neq j$ also has the property that $Q_j^y\cap [B_\dl^N(y)\setminus B_\eps^N(y)]$ consists of exactly two components. Let $Q_i^y\cap [B_\dl^N(y)\setminus B_\eps^N(y)]= \graph u_{i,+}^y\cup \graph u_{l_i,-}^y$ and let $Q_j^y\cap [B_\dl^N(y)\setminus B_\eps^N(y)]= \graph u_{j,+}^y\cup \graph u_{l_j,-}^y$. Then, because of the convergence and separation properties, we can assume that $j<i<l_i<l_j$. 

Let $W$ be the connected component of $B_\dl^N(y)\setminus Q_i^y\cup Q_j^y$ such that $Q_i^y\cup Q_j^y\subset \partial W$. The convergence and elementary separation properties yield that the mean curvature vector of $M_k$ is pointing outside $W$ on $Q_j^y$ and inside $W$ on $Q_i^y$.  Moreover, as $k\to \infty$, we have that $\overline W\to C^+\cup  C^-$ in Hausdorff distance. The argument described in~\cite{alr1} can be modified to prove the following claim. 

\begin{claim}\label{stableinw}
If $Q_i^y$ is not strongly stable, then there exists a compact, oriented, stable hypersurface $\Gamma$ embedded in $W$ with constant mean curvature $H$ and such that
$\partial \Gamma = \partial Q_i^y$ and
$\Gamma $ is homologous to $Q_i^y$ in $W$.
\end{claim}

\begin{proof} [Proof of Claim \ref{stableinw}]
Let $\mathcal F$ be the family of subsets $Q\subset W$ of finite perimeter whose boundary $\partial Q$  is a rectifiable integer multiplicity current such that $Q_i^y\subset \partial Q$ and let $\Sigma=\partial Q\setminus Q^y_i$, so that  $\partial \Sigma=\partial Q_i^y$. Given $\mu>0$, 
let $F_\mu\colon \mathcal F\to \mathbb R$ be the functional
\[
F_\mu(Q)=\h^{n-1}(\Sigma)+(H+\mu)\h^n(Q)\,.
\]

Let $W_1$ be the mean convex component of $B_\dl^N(y)\setminus Q_i^y$, let $S_{min}\subset W_1$ be a volume minimizing hypersurface  with $\partial 
S_{min}=\partial Q_i^y$ and homologous to $Q_i^y$, and let $Q_{min}$ denote the region in $W_1$ enclosed by $Q_i^y\cup S_{min}$~\cite{DG61, FF72, Giusti}. Recall that since $n\leq 7$, no singularities occur. 

Let   $Q_\rho:=\{x\in W : \dist_{N}(x, Q_j^y)\leq \rho\}$ and note that if $\rho$ is chosen sufficiently small, then the sets 
\[
S_t:=\{x\in Q_\rho \text{ such that } \dist_{N}(x, Q_j^y)=t \}, \; 0\leq t \leq \rho
\]
are smooth hypersurfaces parallel to $Q_j^y$ and foliating $Q_\rho$. Let $Y$ be the the unit vector field normal to the foliation and pointing toward $Q_j^y$. Let $H_t$ denote the mean curvature of $S_t$ as it is oriented by $Y$. Then 

\[
\frac{d}{dt}H_t\mid_{t=0}=|A|^2 + \Ric_N(n_j,n_j)
\]
where $n_j$ is the unit normal vector field to $Q_j^y$. Thus, for any $\mu>0$ there exists $\rho_\mu>0$, depending on $\Ric_N(n_j,n_j)$, such for $t\in [0,{\rho_\mu}]$  we have that $H_t< H+\mu$ and at a point $p\in S_t$
\[
\dvg_N Y=\dvg_{S_t} Y= -H_t \quad \implies \quad -H-\mu< \dvg_{N} Y.
\]
Let $Q_{par}:=Q_{\rho_\mu}$ and $S_{par}= S_{\rho_\mu}$.

Next we are going to work on $Q_i^y$. Let $\phi$ be the first eigenfunction of the stability operator of  $Q_i^y$. The eigenfunction $\phi$ is positive in the interior of $Q_i^y$ and since $Q_i^y$ is not stable, then 
\[
\Delta \phi +|A|^2 \phi +\Ric_N(n_i,n_i)\phi+\lambda_1\phi = 0, 
\]
where $\lambda_1$ is a negative constant and $n_i$ is the unit normal vector field to $Q_i^y$. And possibly after a small perturbation of $\dl$, we can assume that $0$ is not an eigenvalue of $\Delta  +|A|^2  +\Ric_N(n_i,n_i)$. Thus there is a smooth function $v$ vanishing on $\partial Q_i^y$, such that $\Delta v  +|A|^2 v +\Ric_N(n_i,n_i) v=1$ in $Q_i^y$. By Hopf's maximum principle the derivative of $\phi$ with respect to the outer pointing normal vector to $\partial Q_i^y$ is strictly negative. Therefore, there exists $a>0$ small, such that $u=\phi+av$ is positive in the interior of $ Q_i^y$.

Let 
\[
\widetilde S_t:=\{x\in W \text{ such that } \dist_{N}(x, Q_i^y)=t u \}, \; 0\leq t \leq \widetilde \rho.
\]
If $\widetilde \rho$ is sufficiently small, the sets $\widetilde S_t$ are smooth hypersurfaces foliating a closed neighbourhood $\widetilde Q_{\widetilde \rho} $ of $Q_i^y$ in $W$. 

Let $X$ be the the unit vector field normal to the foliation and pointing away from $Q_i^y$. Let $H_t$ denote the mean curvature of $\widetilde S_t$ as it is oriented by $X$. Then 
\[
\frac{d}{dt}H_t\mid_{t=0}=\Delta u +|A|^2u + \Ric_N(n_i,n_i)u=-\lambda_1\phi +a>0,
\]
where $n_i$ is the unit normal vector field to $Q_i^y$. Therefore, if $\widetilde \rho$ is taken sufficiently small, for $t\in (0,\widetilde \rho]$ we have that $H_t> H$ and at a point $p\in \widetilde S_t$ we have
\[
\dvg_{N} X<-H.
\]
Let $Q_{uns}:=\widetilde Q_{\widetilde \rho}$ and $S_{uns}:=\widetilde S_{\widetilde \rho}$.

\begin{claim}\label{subset}
Let $Q\in\mathcal F$ with $\Sigma$ smooth and transverse to $S_{min}, S_{par}$, and $S_{uns}$. The following statements hold.
\begin{enumerate}
\item If $Q\not\subset Q_{min}$ then  $F_\mu (Q\cap Q_{min})\leq F_\mu (Q)$;  
\item  If $Q\cap Q_{par}\neq \emptyset$ then $F_\mu (Q\setminus Q_{par})\leq F_\mu (Q)$;
\item If $Q_{uns}\not \subset Q$ then $F_\mu (Q\cup Q_{uns})\leq F_\mu (Q)$. 
\end{enumerate}
\end{claim}

\begin{proof}[Proof of Claim \ref{subset}]
We first prove that  if $Q\not\subset Q_{min}$ then  $F_\mu (Q\cap Q_{min})\leq F_\mu (Q)$. Since $Q\cap Q_{min}\subset Q$, we have that $\h^n(Q\cap Q_{min})\leq \h^n(Q)$ and, by construction, $\h^{n-1}(\Sigma')\leq \h^{n-1}(\Sigma)$ where $\Sigma':= \partial (Q\cap Q_{min})\setminus Q_i^y$.

We now prove that  if $Q\cap Q_{par}\neq \emptyset$ then $F_\mu (Q\setminus Q_{par})\leq F_\mu (Q)$. Recall that in $Q_{par}$, $-H-\mu< \dvg_N Y$, therefore 
\[
(-H-\mu)\h^n(Q\cap Q_{par})<\int_{Q\cap Q_{par}} \dvg_{N} Y=\int_{\partial (Q\cap Q_{par})} Y\cdot \nu
\]
where $\nu$ is the outer pointing unit normal to $\partial (Q\cap Q_{par})$ and
\[
\int_{\partial (Q\cap Q_{par})} Y\cdot \nu = \int_{Q\cap S_{par}} Y\cdot \nu+\int_{\Sigma \cap Q_{par}} Y\cdot \nu.
\]
Since, by construction, $Y\cdot \nu=-1$ on $S_{par}$ and $Y\cdot \nu\leq 1$ on $\Sigma \cap Q_{par}$, we have that
\[
(-H-\mu)\h^n(Q\cap Q_{par})< -\h^{n-1}(Q\cap S_{par})+\h^{n-1}(\Sigma \cap Q_{par})
\]
and
\[
F_\mu (Q\setminus Q_{par})= (H+\mu)(\h^n(Q)-\h^n(Q\cap Q_{par}))+\h^{n-1}(\Sigma\setminus Q_{par}) +\h^{n-1}(Q\cap S_{par})
\]
\[
< (H+\mu)\h^n(Q)+\h^{n-1}(\Sigma \cap Q_{par})+\h^{n-1}(\Sigma\setminus Q_{par})=F_\mu (Q)
\]

We finally prove that if $Q_{uns}\not \subset Q$ then $F_\mu (Q\cup Q_{uns})\leq F_\mu (Q)$. We argue similarly to the previous claim. Recall that in $Q_{uns}$, $\dvg_{N} X<-H$. Therefore 
\[
-H\h^n(Q_{uns}\setminus Q)>\int_{Q_{uns}\setminus Q} \dvg_{N} X=\int_{\partial (Q_{uns}\setminus Q)} X\cdot \nu
\]
where $\nu$ is the outer pointing unit normal to $\partial (Q_{uns}\setminus Q)$ and
\[
\int_{\partial (Q_{uns}\setminus Q)} X\cdot \nu = \int_{S_{uns}\setminus Q} X\cdot \nu+\int_{\Sigma \cap Q_{uns}} X\cdot \nu.
\]
Since, by construction, $X\cdot \nu=1$ on $S_{uns}$ and $X\cdot \nu\geq -1$ on $\Sigma \cap Q_{uns}$, we have that
\[
-H\h^n(Q_{uns}\setminus Q)> \h^{n-1}(S_{uns}\setminus Q)-\h^{n-1}(\Sigma \cap Q_{uns})
\]
and
\[
F_\mu (Q\cup Q_{uns})= (H+\mu)(\h^n(Q)+\h^n(Q_{uns}\setminus Q))+\h^{n-1}(\Sigma\setminus Q_{uns}) +\h^{n-1}(S_{uns}\setminus Q)
\]
\[
< (H+\mu)\h^n(Q)+\mu \h^n(Q_{uns}\setminus Q)+\h^{n-1}(\Sigma \cap Q_{uns})+\h^{n-1}(\Sigma \setminus Q_{uns})<F_\mu (Q).
\]

This finishes the proof of Claim~\ref{subset}.
\end{proof}

In order to find a minimizer for the functional $F_\mu$ we consider a minimizing sequence $Q_m$ and, since they have uniformly bounded areas, we can apply the compactness results of \cite{FF72} to extract a converging subsequence.  Note that by Claim~\ref{subset}, we can assume that $Q_m\subset Q_{min}$, $Q_m\cap Q_{par}= \emptyset$, and $Q_{uns}\subset Q_m$. It is known that a minimizer of $F_\mu$ is smooth \cite{Alm68, Bomb78, SS82NP} and thus we obtain a compact, embedded, oriented minimizer $\Gamma_\mu \subset W$ of the functional $F_\mu$ such that
$\partial \Gamma_\mu = \partial Q_i^y$ and
$\Gamma_\mu $ is homologous to $Q_i^y$ in $W$. In particular, $\Gamma_\mu$ has constant mean curvature equal to $H+\mu$. 

We can also assume that $H+\mu<2H$ and 
\[
\h^{n-1}(\Gamma_\mu)\leq \h^{n-1}(Q_i^y)\leq 2\h^{n-1} (C^{+}\cup C^{-}).
\]
The first inequality above follows because $\h^{n-1}(\Gamma_\mu)\leq F_\mu(\Gamma_\mu)\leq F_\mu (Q_i^y)= \h^{n-1}(Q_i^y)$. The second inequality holds because away from the singular point of convergence $y$, the volume can be bounded by the volume of the limit, and nearby $y$ it can be bounded by using the monotonicity formula for the volume, exactly like we have done to finish the proof of Theorem~\ref{main}. 
Then the results in \cite{bcw18} (see Lemma~\ref{sreg} and Remark~\ref{sregrem}) give that $\Gamma_\mu$ has norm of the second fundamental form uniformly bounded on compact sets of $B_\dl^N(y)$. And taking the limit of $\Gamma_\mu$ as $\mu$ goes to zero, we obtain in the limit the desired $\Gamma$ and finish the proof of Claim~\ref{stableinw}.
\end{proof}

We can now finish the proof of Claim \ref{disconn}. 
Since $y$ is a singular point of convergence, $Q_i^y$ cannot be strongly stable and thus cannot have norm of the second fundamental form bounded nearby $y$. Therefore Claim~\ref{stableinw} gives a compact, oriented, stable hypersurface $\Gamma$ embedded in $W$ with constant mean curvature $H$ and such that
$\partial \Gamma = \partial Q_i^y$ and
$\Gamma $ is homologous to $Q_i^y$ in $W$.

We now recall that while we have omitted the index $k$, we have in fact a sequence of domains $W(k)$ and stable hypersurfaces $\Gamma(k)\subset W(k)$.   By the previous discussion, $\Gamma(k)$ has norm of the second fundamental form uniformly bounded on compact sets of $B_\dl^N(y)$, uniform in $k$. And by construction, since $\Gamma(k)$ is homologous to $Q_i^y$ in $W$, for any $\rho>0$ there exists $k>0$ such that $\Gamma (k) \cap B_\rho^N(y)\neq \emptyset$. Using the uniform bound on the norm of the second fundamental form gives that $\Gamma(k)$ must converge smoothly to $C^+$ or $C^-$  or both. Elementary separation properties give that $Q_j^y$ cannot converge smoothly to $[C^+\cup C^- ]\setminus \{y\}$. This contradiction proves that there is only one $Q_i^y$ such that $[Q_i^y\cap B_\dl^N(y)]\setminus B_\eps^N(y)$ is disconnected.
\end{proof}

We now prove that the convergence to $V$ is with multiplicity one and finish the proof of the theorem. Arguing by contradiction, assume that the multiplicity of convergence along some $V^\ell$ is $m^\ell\geq 2$.  Recall that the convergence is smooth on compact subsets $K\subset \subset \ov{V}^\ell\sm \Delta$.  Observe that we must have $\Delta\cap \ov{V}^\ell\neq \emptyset$: if not, since $\ov{V}^\ell$ is connected, we can write the approaching $M_k$'s globally as graphs over $\ov{V}^\ell$ (since CMC hypersurfaces are always two-sided). And if there were more than one graph, then the $M_k$'s are disconnected.

Let $\Delta\cap \ov{V}^\ell=\{y_1, \dots, y_{g(\ell)}\}$. Since the convergence is smooth on $\ov{V}^\ell \setminus \bigcup_{j=1}^{g(\ell)} B^N_{\eps}(y_j)$ and with finite multiplicity, we can write the approaching surfaces $M_k \setminus \bigcup_{j=1}^{g(\ell)} B^N_{\eps}(y_j)$  globally as graphs over $\ov{V}^\ell \setminus \bigcup_{j=1}^{g(\ell)} B^N_{\eps}(y_j)$ and order such graphs by height with respect to the mean curvature vector $\vec{H}^\ell$ of $\ov{V}^\ell$. This gives ordered sheets $S_k^1,\dots ,S_k^{m^\ell}$ each converging smoothly to $\ov{V}^\ell \setminus \bigcup_{j=1}^{g(\ell)} B^N_{\eps}(y_j)$. Note that this ordering is different from the previous local ordering established nearby a singular point.   Let $y_j\in \Delta\cap \overline{V}^\ell\subset t(V)$ and recall that $V\cap [B^N_{\dl}(y_j)\sm B^N_{\eps}(y_j)]$ consists of two oppositely oriented components  which we denote by $\Gamma_j^+$ and $\Gamma_j^-$. Assume that $\Gamma_j^+\subset V^\ell$ and let $Q^1_j$ denote the connected component of $M_k\cap B^N_{\dl}(y_j)$ containing the component of $S_k^1\cap [B^N_{\dl}(y_j)\sm B^N_{\eps}(y_j)]$ converging to $\Gamma_j^+$. If $\Gamma_j^-\subset V^\ell$, let $Q^1_{j_-}$ denote the connected component of $M_k\cap B^N_{\dl}(y_j)$ containing the component of $S_k^1\cap [B^N_{\dl}(y_j)\sm B^N_{\eps}(y_j)]$ converging to $\Gamma_j^-$. Recall that if $Q^1_j\cap [B_\dl^N(y)\setminus B_\eps^N(y)]$
consists of more than one connected component, then it consists of exactly two components, one converging to  $\Gamma_j^+$ and the other to $\Gamma_j^-$. And the same is true of $Q^1_{j_-}$. If for each $y_j\in \Delta\cap \overline{V}^\ell$, $Q^1_j\cap [B_\dl^N(y)\setminus B_\eps^N(y)]$ and $Q^1_{j_-}\cap [B_\dl^N(y)\setminus B_\eps^N(y)]$ each consists of exactly one component, then $S^1_k$ would correspond to a single connected component of $M_k$ converging smoothly with multiplicity one to $\ov{V}^\ell$, and in particular $M_k$ would be disconnected. Therefore, after possibly relabelling, there exists $y_j\in \Delta\cap \overline{V}^\ell$, such that $Q^1_j\cap [B_\dl^N(y)\setminus B_\eps^N(y)]$ consists of exactly two components, one converging to  $\Gamma_j^+$ and the other to $\Gamma_j^-$. 

Notice that by the previous claim, $Q^1_j$ must be the unique such component. That is, if $\Lambda $ is another connected component of $M_k\cap B^N_{\dl}(y_j)$, then $\Lambda \cap [B_\dl^N(y)\setminus B_\eps^N(y)]$ is connected. In particular, if $\Lambda$ is the connected component of $M_k\cap B^N_{\dl}(y_j)$ containing the component of $S_k^2\cap [B^N_{\dl}(y_j)\sm B^N_{\eps}(y_j)]$ converging to $\Gamma_j^+$, then $\Lambda \cap [B_\dl^N(y)\setminus B_\eps^N(y)]$ is connected.  Note that by our choice of $S_k^1$, $\Lambda \cap [B_\dl^N(y)\setminus B_\eps^N(y)]$ must be below the component  component of $Q^1_j\cap [B_\dl^N(y)\setminus B_\eps^N(y)]$ that converges to $\Gamma_j^+$ and above the component of $Q^1_j\cap [B_\dl^N(y)\setminus B_\eps^N(y)]$ that converges to $\Gamma_j^-$. By elementary separation property, we obtain a contradiction. This  proves that $m^\ell=1$, $l=1,\dots, L$, and finishes the proof of the theorem.
\end{proof}

\section{The Bubbling Analysis}\label{sec_bubb}
 
The goal of this section is to prove the bubble-compactness theorem for $H$-hypersurfaces when $H>0$ is fixed. We shall see that, contrary to the minimal setting, the only bubbles that can occur are catenoids. We recall that the catenoid $\mathcal{C}^{n-1}\In \R^n$ is a rotationally symmetric complete minimal hypersurface with $\Ind(\mathcal{C}) = \lim_{R\to \infty} (\mathcal{C}\cap B_R(0))=1$ and finite total curvature (see e.g. \cite{TZ09} for further details). In the sequel $\mathcal{C}$ will denote any catenoid up to scaling, rotations and translations, without re-labelling.

We first recall a result of Schoen \cite[Theorem 3]{sc1} which states that for each $n\geq 3$ the only complete minimal immersions $M^{n-1}\In \R^{n}$ which are regular at infinity and have two ends are either catenoids $\mathcal{C}^{n-1}$ or a pair of hyperplanes. Combining a result of Tysk \cite[Lemma 4]{T89} with \cite[Proposition 3]{sc1} we see in particular that this implies 

\begin{lemma}\label{lem:cats}
	When $3\leq n\leq 7$ the only embedded, complete minimal hypersurfaces $M^{n-1}\In \R^n$ with Euclidean volume growth, finite index and at most two ends, are either one or two parallel planes\footnote{two parallel planes may include a single plane of multiplicity two}, or a catenoid.  
\end{lemma}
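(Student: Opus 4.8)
The plan is to deduce Lemma~\ref{lem:cats} by combining the two cited classification results with the structure theory for minimal hypersurfaces of finite total curvature in low dimensions. First I would recall that, by the work of Tysk \cite[Lemma 4]{T89}, a complete embedded minimal hypersurface $M^{n-1}\In\R^n$ with $3\le n\le 7$, finite index, and Euclidean volume growth has finite total curvature $\int_M|A|^{n-1}<\infty$; equivalently, outside a large ball $M$ is a finite union of graphs over a hyperplane with the appropriate decay, so each end of $M$ is \emph{regular at infinity} in the sense of Schoen. In particular $M$ has finitely many ends, and by hypothesis this number is at most two.

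Next I would split into cases on the number of ends. If $M$ has one end, then by the monotonicity formula together with regularity at infinity the density of $M$ at infinity is $1$; since $M$ is minimal, embedded, and has a single regular end asymptotic to a hyperplane with multiplicity one, a standard argument (e.g.\ via the Gauss map having finite degree and the Jorge--Meeks type formula, or simply unique continuation from the asymptotic hyperplane using the fact that a properly embedded minimal hypersurface with one regular end of density one is a hyperplane) forces $M$ to be a single hyperplane. If $M$ has exactly two ends, then each end is regular at infinity by the previous paragraph, so Schoen's theorem \cite[Theorem 3]{sc1} applies verbatim: $M$ is either a catenoid $\mathcal{C}^{n-1}$ or a pair of parallel hyperplanes. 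One should also allow the degenerate reading in which ``two ends'' comes from a single hyperplane counted with multiplicity two (the footnote), but since we are told $M$ is embedded this is only relevant in the limiting/varifold sense; as an honest embedded hypersurface the two-ended case is literally catenoid or two distinct parallel planes.

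The main obstacle, and the only place that needs care, is justifying that \emph{finite index plus Euclidean volume growth implies finite total curvature / regularity at infinity} in the stated dimension range — this is precisely the content invoked from Tysk \cite[Lemma 4]{T89}, which in turn relies on the Schoen--Simon--Yau and Schoen--Simon estimates being available for $n\le 7$ (no singularities), and on the fact that finite index means $M$ is stable outside a compact set, so the curvature estimates for stable minimal hypersurfaces with bounded volume growth (as in Lemma~\ref{sreg}(ii)) give $|A|(x)\to 0$ at infinity at the rate $|x|^{-1}$; bootstrapping this decay with the Euclidean volume bound yields the integrability of $|A|^{n-1}$ and the graphical asymptotics. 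Once regularity at infinity is in hand, everything else is a direct citation of \cite{sc1}, so I would keep the proof short: state the reduction to regular ends, then quote Schoen's two-ended classification and dispatch the one-ended case by the density-one argument.
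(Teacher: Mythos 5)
Your argument follows essentially the same route as the paper: deduce finite total curvature and regularity at infinity from Tysk's Lemma~4 (together with Schoen's Proposition~3, which you fold into the Tysk step), then invoke Schoen's Theorem~3 for the two-ended classification and dispatch the one-ended case by density considerations. This matches the paper's proof, which is precisely a citation of Tysk~\cite[Lemma 4]{T89} and Schoen~\cite[Proposition 3, Theorem 3]{sc1}; your write-up is merely more explicit about the reduction and the one-ended case.
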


The total curvature of a hypersurface is denoted by $\mathcal{T}=\int |A|^{n-1}$ and $\mathcal{T}(\mathcal{C}^{n-1})$ denotes the total curvature of the catenoid. When $n=3$ we have $\mathcal{T}(\mathcal{C}^2)=8\pi$. 

The main result of this section is as follows. 
\begin{thm}\label{thm:bubb}
	With the same hypotheses as Theorem \ref{thm:conv2}, for each $y \in \Dl$ there exists a finite number $0<J_y\in \mathbb{N}$ of point-scale sequences (see Definition \ref{def:ps}) $\{(p^{y,\ell}_k, r^{y,\ell}_k)\}_{\ell=1}^{J_y}$ so that:
				
	\begin{enumerate}
		\item	these point-scale sequences are distinct, in the sense that for all $1\leq i\neq j\leq J_y$ 
			\[\frac{dist_g(p^{y,i}_k,p^{y,j}_k)}{r^{y,i}_k + r^{y,j}_k}\to \infty.
						\]
		Taking normal coordinates centred at $p^{y,\ell}_k$ and letting
		$\widetilde{M}^{y,\ell}_k:=M_k/r^{y,\ell}_k \subset \R^{n}$ then $\widetilde{M}^{y,\ell}_k$ converges smoothly on compact subsets to a catenoid $\mathcal{C}^{n-1}$ with multiplicity one, for all $\ell$. 


\item There exist $\dl_0,R_0>0$ so that for all $y\in \Dl$, $\dl\leq \dl_0$, $R\geq R_0$ and $k$ sufficiently large $$M_k\cap \left(B_\dl(y)\sm \cup_{\ell =1}^{J_y} B_{Rr_k^{y,\ell}}(p_k^{y,\ell})\right)$$
can be written as two smooth graphs over $T_y V=\{x^n=0\}$ with mean curvature vectors pointing in opposite directions (in suitable normal coordinates $\{x^i\}$ centred at $y$) with slope $\eta=\eta(k,R,\dl)$ satisfying
$$\lim_{\dl \to 0} \lim_{R\to \infty} \lim_{k\to \infty} \eta = 0.$$

		\item The number of catenoid bubbles $\sum_{y\in\Dl} J_y = J \leq \mathcal{I}$, and $index(V) :=\sum_{i=1}^L index(\ov{V^i})\leq \mathcal{I}-J$.
		\item There is no loss of total curvature: 
			\[
			\lim_{k\to \infty} \mathcal{T}(M_k) = \sum_{i=1}^L \mathcal{T}(\ov{V}^i) + J\mathcal{T}(\mathcal{C}^{n-1})
			\]
	where we have denoted by $\mathcal{T}(\ov{V}^i)$ and $\mathcal{T}(M_k)$ the total curvature in $(N^{n},g)$ of the hypersurfaces $\ov{V}^i$ and $M_k$, respectively. In particular, when $n=3$ we have, for all $k$ sufficiently large
		\[\chi(M_k) = \sum_{i=1}^L  \chi(\ov{V}^i) - 2J.\] 
		\item When $k$ is sufficiently large, the surfaces $M_k$ of this subsequence are pair-wise diffeomorphic to one another.
				\end{enumerate}
				
			\end{thm}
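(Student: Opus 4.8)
The plan is to prove Theorem~\ref{thm:bubb} by an inductive ``neck analysis'' at each $y\in\Dl$, in the spirit of \cite{BS18}, using crucially the multiplicity one conclusion of Theorem~\ref{thm:mult} together with the classification Lemma~\ref{lem:cats} and Schoen's result \cite{sc1}. First I would set up the point-scale sequences: fix $y\in\Dl$, work in normal coordinates centred at $y$, and use Lemma~\ref{curvest} to locate, for each large $k$, a point $p_k^{y,1}$ and scale $r_k^{y,1}=|A_{M_k}|(p_k^{y,1})^{-1}$ realizing the concentration of curvature in a shrinking ball $B_{\dl_k}(y)$. Rescaling $M_k$ by $1/r_k^{y,1}$, the rescaled hypersurfaces $\widetilde M_k^{y,1}$ have uniformly bounded second fundamental form near the origin, bounded index, Euclidean volume growth (by the monotonicity formula and the ambient area bound), and mean curvature $\widetilde H_k = r_k^{y,1}H\to 0$; so a subsequence converges smoothly on compacta to a complete embedded minimal hypersurface $\Sigma\In\R^n$ with $|A_\Sigma|(0)=1$, finite index, Euclidean volume growth. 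Here is the key point specific to $H>0$: because $y$ lies in the non-embedded (touching) part $t(V)$ and multiplicity one holds (Theorem~\ref{thm:mult}), the two graphical sheets of $M_k$ near $y$ are \emph{oppositely oriented and asymptotically tangent}, which forces any bubble to have exactly two ends; combined with Lemma~\ref{lem:cats} this means $\Sigma$ is a catenoid (the ``two parallel planes'' alternative is excluded since $|A_\Sigma|(0)=1$). I would then iterate: if after removing $B_{Rr_k^{y,1}}(p_k^{y,1})$ the curvature still concentrates somewhere in $B_{\dl_k}(y)$, extract a second point-scale sequence, check distinctness via the ratio condition in item (1), and repeat. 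The iteration terminates after finitely many steps because each catenoid bubble contributes at least $\mathcal{T}(\mathcal{C}^{n-1})$ to the total curvature and at least $1$ to the index (by the index additivity / logarithmic cutoff argument below), both of which are a priori bounded.

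Next I would prove items (2), (3) and (4) together via a ``no neck curvature'' estimate. The heart is showing that on the neck regions $B_\dl(y)\sm\cup_\ell B_{Rr_k^{y,\ell}}(p_k^{y,\ell})$ the curvature decays so that, in the triple limit $k\to\infty$, $R\to\infty$, $\dl\to0$, there is no concentration of $|A|^{n-1}$ left over: this is exactly the statement that gives item (4), $\lim_k\mathcal{T}(M_k)=\sum_i\mathcal{T}(\ov V^i)+J\,\mathcal{T}(\mathcal{C}^{n-1})$, once one knows smooth multiplicity one convergence away from $\Dl$ (giving the $\sum_i\mathcal{T}(\ov V^i)$ term) and smooth multiplicity one convergence of each bubble (giving the $J\,\mathcal{T}(\mathcal{C}^{n-1})$ term). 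The mechanism for the neck estimate is a standard but delicate dyadic decomposition of the annular neck into scales, on each of which either the surface is graphical with small curvature or one is inside a bubble region; stability of the necks (each intermediate annulus carries zero index once $R$ is large and $\dl$ small, by the same pigeonhole argument as in the proof of Lemma~\ref{curvest}) then gives, via Lemma~\ref{sreg}/Remark~\ref{sregrem}, the curvature estimate $|A|(x)\le C/\dist(x,\{p_k^{y,\ell}\}\cup\{y\})$, which integrates to something that vanishes in the triple limit. For item (2), the two-graph-with-small-slope conclusion follows from this curvature estimate plus the fact that the limit over $\ov V^\ell$ near $y$ is a pair of tangent planes $T_yV$. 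For item (3), the index count: each catenoid carries index exactly $1$ and the bubble regions are disjoint and disjoint from a neighbourhood of $e(V)$, so a cutoff-function argument (transplant the destabilizing directions of each $\ov V^i$ and of each catenoid bubble into $M_k$, using logarithmic cutoffs to kill interactions on the necks, being careful to work with the volume-preserving class when invoking $\Ind_0$ versus $\Ind$ as in Lemma~\ref{lem:index}) shows $J+\mathrm{index}(V)\le\liminf_k\Ind(M_k)\le\mathcal{I}+1$; sharpening to $J+\mathrm{index}(V)\le\mathcal{I}$ requires the more careful bookkeeping of \cite{BS18} distinguishing $\Ind_0$ from $\Ind$. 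The Euler characteristic identity in item (4) when $n=3$ is then immediate from Gauss--Bonnet applied to the total curvature identity, since $\int_{\mathcal{C}^2}|A|^2 = 8\pi = -2\pi\chi$ with $\chi(\mathcal{C}^2)=0$... more precisely each bubble changes $\chi$ by $-2$ (a catenoidal neck surgery), giving $\chi(M_k)=\sum_i\chi(\ov V^i)-2J$.

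Finally, item (5) — that all $M_k$ in the subsequence are pairwise diffeomorphic for $k$ large — I would deduce as a direct consequence of the structure established in items (1), (2) and the smooth convergence away from $\Dl$. The picture for $k$ large is: $M_k$ is smoothly isotopic to the ``surgered'' limit obtained from $V=\cup_i\ov V^i$ by, at each $y\in\Dl\subset t(V)$, replacing the two small tangent disks from the two touching sheets by $J_y$ thin catenoidal necks connecting them (plus, if a component $\ov V^i$ self-intersects, the analogous self-surgery). Concretely: away from $\cup_{y\in\Dl}B_\dl(y)$, $M_k$ is a normal graph over $V$ hence diffeomorphic to (an open subset of) $V$ for all large $k$, with the diffeomorphism type independent of $k$; inside each $B_\dl(y)$, item (2) says $M_k$ consists of two graphs over $T_yV$ outside $\cup_\ell B_{Rr_k^{y,\ell}}(p_k^{y,\ell})$, and inside each $B_{Rr_k^{y,\ell}}(p_k^{y,\ell})$ the rescaled picture is a fixed compact piece of a catenoid (by item (1), multiplicity one, smooth convergence), so it is a standard annular neck; gluing these local models along their common graphical boundary annuli produces a diffeomorphism $M_k\cong M_{k'}$ whose existence depends only on the combinatorial data $(L,\{V^i\},\{J_y\}_{y\in\Dl})$, which is fixed along the subsequence. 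The main obstacle, and the step demanding the most care, is the neck curvature estimate underlying items (2) and (4): one must rule out any residual curvature concentration on the necks, and the only tool is the stability-based estimate of Lemma~\ref{sreg}/Remark~\ref{sregrem}, so the whole argument hinges on showing the intermediate neck annuli are genuinely stable (index zero) for $R$ large and $\dl$ small — this is where the $H>0$ two-ended/oppositely-oriented structure, via Theorem~\ref{thm:mult}, does the essential work that is simply unavailable in the minimal case. Everything in item (5) is then soft, modulo the by-now-standard fact that two smooth compact hypersurfaces that are $C^1$-close (after the surgery identification) are diffeomorphic.
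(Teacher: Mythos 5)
Your proposal identifies the key ingredients — multiplicity one (Theorem~\ref{thm:mult}), the density-$2$ structure at $t(V)$, and the classification Lemma~\ref{lem:cats} — but there are two genuine gaps, both traceable to a scale-selection mechanism different from the paper's. You pick bubble scales by curvature concentration ($r_k=|A(p_k)|^{-1}$), in the spirit of Choi--Schoen. The paper instead defines $r^j_k$ to be the \emph{smallest radius at which $M_k$ is unstable} inside $B_\delta(y)$, after excising the previously chosen balls $U_{j-1}$. This is not cosmetic: it makes the blow-up stable on all unit balls \emph{by construction}, so Lemma~\ref{sreg} gives smooth multiplicity-one convergence immediately; it rules out planar limits because planes are strictly stable on compacta while the blow-up is unstable on a ball of radius $3/2$ by the very definition of $r^j_k$; and — crucially for item~(3) — at each stage it produces a new Dirichlet-unstable region of $M_k$ disjoint from the earlier ones, so that additivity of index over disjoint domains together with Lemma~\ref{lem:index} give $index(M_k\setminus U_j)\le\mathcal{I}-j$ and hence $index(V)\le\mathcal{I}-J$ directly. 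Your eigenfunction-transplant argument, as you concede, only yields $J+index(V)\le\mathcal{I}+1$; the paper's selection mechanism \emph{is} the missing bookkeeping. (The paper also tracks intermediate scales whose blow-up is a non-smooth double plane rather than a catenoid, marks them, and discards them from $J_y$ at the end; this case distinction is absent from your iteration.)

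The more serious gap is item~(2). You claim the stability-based curvature estimate $|A|(x)\le C/\dist(x,\{p_k^{y,\ell}\}\cup\{y\})$ on the neck, ``plus the fact that the limit near $y$ is a pair of tangent planes,'' gives the two-graph structure over $T_yV$ with slope $\eta\to 0$. It does not: that estimate gives graphicality with \emph{bounded} slope over \emph{some} plane at each dyadic scale (the product $|A|\cdot\dist$ is bounded, not small), and nothing in a pointwise curvature bound prevents the local tangent plane from slowly rotating as one moves inward through the neck. Pinning the plane to $T_yV$ uniformly across all neck scales requires a genuine propagation argument, and this is precisely what Lemma~\ref{lem:neck} supplies: it builds a local CMC foliation (Proposition~\ref{prop:folio}) on the outer annulus, straightens it to level sets $\{y=c\}$, and applies the maximum principle for CMC graphs to show a tilted limit plane would force the whole leaf structure to collapse to a single horizontal slice, a contradiction. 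Without this step (or an equivalent), the assertion $\eta\to 0$ in item~(2) is unproven, and with it falls the total-curvature identity of item~(4), which needs exactly those small-slope graphical coordinates to show $\int_{\mathrm{neck}}|A|^{n-1}$ vanishes in the triple limit.
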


By a contradiction argument we immediately obtain the following 
\begin{corollary}\label{cor_bubb}
Given $H>0$ there exists $C=C(N,\Lambda, \mathcal{I},H)$ so that the collection of $H$-hypersurfaces with index bounded by $\mathcal{I}$ and volume bounded by $\Lambda$ has at most $C$ distinct diffeomorphism types. Furthermore for any $H$-hypersurface $M$ with the above index and volume bounds we have
\[\int_M |A|^{n-1} \leq C.\]
\end{corollary}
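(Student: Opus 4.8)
The plan is to deduce the corollary from Theorem~\ref{thm:bubb} by a routine compactness-and-contradiction argument; all of the substance has already been done, and the only points requiring a little care are that the limit configuration is not a priori unique and that the quantities on the right-hand sides of Theorem~\ref{thm:bubb} are genuinely controlled. Note first that if $\Ind(M)\le \mathcal I$ then, since $\Ind_0(M)\le \Ind(M)$ always, also $\Ind_0(M)\le \mathcal I$, so that any sequence of $H$-hypersurfaces with index bounded by $\mathcal I$ and volume bounded by $\Lambda$ satisfies the hypotheses of Theorem~\ref{thm:conv2} (hence those of Theorem~\ref{thm:bubb}).

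\textbf{Bound on the number of diffeomorphism types.} Suppose this fails: then there is a sequence $\{M_k\}_{k\in\N}$ of $H$-hypersurfaces with $\Ind(M_k)\le \mathcal I$ and $\h^{n-1}(M_k)\le\Lambda$ that are pairwise non-diffeomorphic. By Theorem~\ref{thm:conv2} we may pass to a subsequence (not relabelled) along which $\{M_k\}$ $H$-converges with multiplicity one to an effectively embedded $H$-hypersurface $V$, with a finite singular set of convergence $\Delta$. Theorem~\ref{thm:bubb}(5) then asserts that, along this very subsequence, $M_k$ and $M_{k'}$ are diffeomorphic for all $k,k'$ sufficiently large, contradicting the choice of the $M_k$ as pairwise non-diffeomorphic. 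Hence there is a constant $C_1=C_1(N,\Lambda,\mathcal I,H)$ bounding the number of distinct diffeomorphism types.

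\textbf{Bound on the total curvature.} Suppose this fails: then there is a sequence $\{M_k\}$ with $\Ind(M_k)\le\mathcal I$, $\h^{n-1}(M_k)\le\Lambda$ and $\mathcal T(M_k)=\int_{M_k}|A|^{n-1}\to\infty$. As above, pass to a subsequence that $H$-converges with multiplicity one to an effectively embedded $H$-hypersurface $V=\cup_{i=1}^L\ov V^i$ with singular set $\Delta$. Each $\ov V^i$ is a fixed compact, smoothly immersed CMC hypersurface, so $\sum_{i=1}^L\mathcal T(\ov V^i)<\infty$; moreover Theorem~\ref{thm:bubb}(3) gives $J=\sum_{y\in\Delta}J_y\le\mathcal I$, so $J\,\mathcal T(\mathcal C^{n-1})<\infty$. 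Then Theorem~\ref{thm:bubb}(4) yields
\[
\lim_{k\to\infty}\mathcal T(M_k)=\sum_{i=1}^L\mathcal T(\ov V^i)+J\,\mathcal T(\mathcal C^{n-1})<\infty,
\]
contradicting $\mathcal T(M_k)\to\infty$. Hence there is a constant $C_2=C_2(N,\Lambda,\mathcal I,H)$ with $\int_M|A|^{n-1}\le C_2$ for every such $M$. Taking $C=\max\{C_1,C_2\}$ completes the proof.

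\textbf{Where the difficulty lies.} At the level of the corollary there is essentially no obstacle: everything rests on Theorem~\ref{thm:bubb}, and in particular on identity (4) (``no loss of total curvature'') and on statement (5) (eventual constancy of the diffeomorphism type along a convergent subsequence), which themselves depend on the multiplicity-one conclusion of Theorem~\ref{thm:mult} forcing every bubble to be a catenoid (via Lemma~\ref{lem:cats}) together with the index accounting $J\le\mathcal I$. The one mild subtlety to keep in mind is that the compactness theorem produces \emph{different} limits $V$ for different subsequences, so a single ``uniform estimate'' is not available directly; the contradiction framework is exactly what converts the subsequential statements of Theorem~\ref{thm:bubb} into the desired uniform bound, since any hypothetical counterexample sequence has a further subsequence to which Theorem~\ref{thm:bubb} applies.
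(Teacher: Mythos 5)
Your proof is correct and takes essentially the same approach the paper has in mind: the paper itself only states that the corollary follows ``by a contradiction argument'' from Theorem~\ref{thm:bubb}, and you have simply spelled out the two contradiction arguments (using parts (5) and (4)/(3) of that theorem, respectively, after passing to a subsequence provided by Theorem~\ref{thm:conv2}). Your preliminary remark reconciling $\Ind$ and $\Ind_0$ via Lemma~\ref{lem:index} is a sensible clarification given the corollary's unqualified use of ``index''.
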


In order to prove Theorem \ref{thm:bubb} we will repeatedly blow-up a sequence of $H$-hypersurfaces according to a given shrinking scale centred at a sequence of points. We first introduce some terminology for this, where here and throughout this section $\dl>0$ will always denote a number satisfying $0<\dl<{\rm inj}_N$: 
\begin{definition}\label{def:ps}
Let $\{M_k\}$ be a sequence of $H$-hypersurfaces in some closed Riemannian manifold $N$. Given $x\in N$ we say that $\{(x_k,r_k)\}\In N\times \R_{>0}$ is a \textbf{point-scale sequence for} $\{M_k\}$, based at $x$, if $x_k\in M_k\cap B_\dl(x)$, $x_k\to x$ and $r_k\to 0$.

Given normal coordinates based at $B_{\dl}(x_k)$ we say that $\widetilde{M}_k\subset B^{\R^n}_{\dl/r_k}$ defined by $\widetilde{M}_k = M_k/r_k$ in these coordinates, is \textbf{a blow up at scale} $(x_k,r_k)$. 

We furthermore say that $\widetilde{M}_k$ converges \textbf{non-smoothly to a plane of multiplicity two} if there exists at least one, but finitely many points, where the convergence is smooth and graphical away from these points but not smooth and graphical across them.       
\end{definition}

With Lemma \ref{lem:cats} and this terminology we are now able to prove 
\begin{lemma}\label{lem:cats_conv}
Let $V=\cup_{\ell =1}^L \ov{V}^\ell$ be a hypersurface effectively embedded in $N$ with constant mean curvature $H>0$ and  let $\{M_k\}_{k\in \N}$ be a sequence of $H$-hypersurfaces with $\sup_k \Ind_0(M_k)<\infty$ that H-converges to $V$ with multiplicity one and let $x\in t(V)$. Let $(x_k,r_k)$ be a point-scale sequence for $\{M_k\}$ based at $x$ and $\widetilde{M}_k:=M_k/r_k \subset \R^{n}$ a blow up along this scale. Then up to subsequence and on compact subsets, $\widetilde{M}_k$ converges to a limit $\widetilde{M}_\infty$, which must pass through the origin. This happens in one of three distinct ways:  
\begin{enumerate}
\item smoothly and graphically to a catenoid
\item non-smoothly to a plane of multiplicity two
\item smoothly and graphically to a single plane or two  parallel planes.
			\end{enumerate}
In case 1 above, if $(z_k,\r_k)$ is another point-scale sequence based at $x$ with $r_k\leq \r_k$ and $$\frac{dist_g(x_k,z_k)}{r_k + \r_k}\leq C$$
then taking a blow up $\widehat{M}_k$ at scale $(z_k,\r_k)$ yields two further distinct possibilities  
\begin{enumerate}
\item[1(a)]	there exists some $K$ with $\r_k/r_k \leq K$ and  $\widehat{M}_k$ converges smoothly to a catenoid or 
\item[1(b)] $\r_k/r_k\to \infty$ and $\widehat{M}_k$ converges non-smoothly to a plane with multiplicity two. 
\end{enumerate}
 Again in either case the limit $\widehat{M}_\infty$ of the $\widehat{M}_k$'s passes through the origin.   	
\end{lemma}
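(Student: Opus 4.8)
The plan is to run a standard blow-up/concentration-compactness analysis of the rescaled hypersurfaces $\widetilde M_k = M_k/r_k$, using the fact that the mean curvature is scaled away ($\widetilde H_k = r_k H \to 0$), so the limit $\widetilde M_\infty$ is a complete minimal hypersurface in $\mathbb R^n$. First I would observe that the blown-up metrics $g_k$ converge in $C^2_{loc}$ to the Euclidean metric and the rescaled second fundamental forms have index bounded by $\sup_k \Ind_0(M_k)<\infty$, while the $H$-convergence with multiplicity one and the singular-set structure from Lemma~\ref{curvest} give local area bounds (hence Euclidean volume growth via the monotonicity formula applied to $M_k$, as in the proof of Theorem~\ref{thm_area}). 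Passing to a subsequence, $\widetilde M_k$ converges as varifolds; where the curvature stays locally bounded, the convergence is smooth and graphical with some integer multiplicity; the bad set where $|\widetilde A_k|$ blows up is finite by the index bound (Lemma~\ref{curvest}, applied now in $\mathbb R^n$ and using that a bubbling point away from $\Delta$ would contribute instability). So the limit $\widetilde M_\infty$ is a complete embedded minimal hypersurface with Euclidean volume growth and finite index, possibly with multiplicity, and the convergence is smooth away from finitely many points.

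Next I would bound the number of ends of $\widetilde M_\infty$ by two. Since $x\in t(V)$, near $x$ the original surfaces $M_k$ lie, after rescaling, between two nearly-flat sheets converging to the two tangent planes of the touching point; alternatively, one uses that the intersection of $\widetilde M_k$ with a large annulus $B_{2R}(0)\setminus B_R(0)$ consists (for $k$ large, then $R$ large) of a bounded number of graphical sheets over $T_x V$ — and the multiplicity-one hypothesis forces this number to be at most two away from the (finite) concentration set. Hence $\widetilde M_\infty$ has at most two ends. Then Lemma~\ref{lem:cats} applies directly: $\widetilde M_\infty$ is one or two parallel planes, or a catenoid. If the convergence is everywhere smooth and $\widetilde M_\infty$ is a catenoid, its total curvature ($8\pi$ when $n=3$) is rigid, so no curvature is lost and multiplicity must be one — giving case~1. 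If the limit is planar and the convergence is smooth with multiplicity one or two, that is case~3; if the limit is planar but the convergence fails to be smooth at the (finitely many, nonempty by the scale choice) concentration points, the varifold limit is a plane with multiplicity two — this is case~2, and I would note the concentration set is nonempty precisely because $(x_k,r_k)$ is a genuine blow-up scale, so in the smooth planar case the limit passes through the origin by the choice $x_k\in M_k$, and in the non-smooth case the origin is forced to be one of the concentration points by the same scale-maximality bookkeeping. That $\widetilde M_\infty$ always passes through the origin is just the statement $0=\lim x_k/r_k \in \widetilde M_\infty$ together with smooth-or-singular convergence there.

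For the last part, in case~1 we have a catenoid bubble at scale $(x_k,r_k)$ and a second point-scale sequence $(z_k,\rho_k)$ with $r_k\le \rho_k$ and $\mathrm{dist}_g(x_k,z_k)/(r_k+\rho_k)\le C$. I would blow up at $(z_k,\rho_k)$: if $\rho_k/r_k$ stays bounded, say $\le K$, then the two scales are comparable and the $\widehat M_k$ are (up to a bounded translation and dilation) reparametrizations of the $\widetilde M_k$, so $\widehat M_k$ again converges smoothly to a catenoid — case~1(a); if instead $\rho_k/r_k\to\infty$, the catenoid at the finer scale $(x_k,r_k)$ becomes, from the viewpoint of the coarser scale $\rho_k$, a vanishingly small neck collapsing to a point, so $\widehat M_k$ converges as varifolds to a plane with multiplicity two but not smoothly at that collapsed point — case~1(b), and $0\in\widehat M_\infty$ again because $z_k\in M_k$. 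The main obstacle I anticipate is the careful bookkeeping that shows the concentration set of each blow-up is finite and, in the non-smooth planar cases, exactly locates where the convergence fails (and in particular contains the origin); this requires combining the index-bound argument of Lemma~\ref{curvest} (each concentration point "costs" a unit of index, via Lemma~\ref{sreg} producing local instability) with the separation/ordering arguments of Section~\ref{section:multiplicity} to rule out three or more graphical sheets and to pin down the two-ended structure — everything else is a routine rescaling argument built on Lemma~\ref{lem:cats}.
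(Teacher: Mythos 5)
Your overall blow-up strategy matches the paper's: rescale, let the mean curvature vanish, use the monotonicity formula for the density bound coming from the touching point $x\in t(V)$, get Euclidean volume growth and a two-end bound, and feed this into Lemma~\ref{lem:cats}. The paper controls the number of ends via the quantitative statement that $\|M_k\|(B_r(z_k))\le(2+\eps)\om_{n-1}r^{n-1}$ (inherited from the density $2$ of $V$ at $x$), which after rescaling gives $\limsup\h^{n-1}(\widetilde M_k\cap B_R)/(\om_{n-1}R^{n-1})\le 2$ for every $R$; you use sheet-counting over $T_xV$ instead, which is essentially the same information and equally valid.

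There is one genuine gap: your argument that smooth convergence to a catenoid forces multiplicity one via ``total curvature rigidity'' is circular. You cannot know the total curvature of $\widetilde M_k$ tends to $8\pi$ rather than $16\pi$ unless you have already ruled out multiplicity two, which is exactly what you are trying to prove. The correct mechanism, which is the one the paper uses (following~\cite{BS18}), is the stability dichotomy: if the convergence is smooth on compact sets but with multiplicity $\ge 2$, the height difference between consecutive sheets produces a positive Jacobi field on the limit, forcing it to be \emph{stable}; since the catenoid has index one, a catenoid limit can only arise with multiplicity one. Equivalently, the paper phrases this as the equivalence ``multiplicity one $\Leftrightarrow$ smooth convergence,'' so that the non-smooth (hence higher-multiplicity) case automatically yields a stable limit, which by Lemma~\ref{lem:cats} and the density bound must be a plane of multiplicity exactly two. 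The rest of your proposal, including the dichotomy 1(a)/1(b) based on whether $\r_k/r_k$ stays bounded and the observation that $0\in\widehat M_\infty$ because $z_k\in M_k$, is correct and matches the paper.
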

\begin{proof}
Since $x\in t(V)$ then
$$\lim_{r\to 0}\fr{\|V\|(B^N_{r}(x))}{\om_{n-1} r^{n-1}} =2.$$
Now by varifold convergence, coupled with the monotonicity formula for CMC hypersurfaces (see e.g. \cite{si1}), we know that for all $\eps >0$ there exist $\eta >0$ and $r_0>0$ so that for all $z_k\in M_k\cap B^N_\eta (x)$ and $k$ sufficiently large then  
$$
\|M_k\|(B^N_{r}(z_k))\leq (2+\eps)\om_{n-1} r^{n-1}$$
for all $r\leq r_0$.

In particular if $\{(z_k,\r_k)\}$ is any point-scale sequence based at $x$ then   
\begin{equation}\label{lsmon}
\limsup_{k\to \infty}\fr{\|M_k\|(B^N_{\r_k}(z_k))}{\om_{n-1} \r_k^{n-1}} \leq 2. 	\end{equation}

Now considering $(x_k,r_k)$ and ${M}_k$ as in the statement of the lemma we will perform a blow-up at this scale in normal coordinates centred at $x_k$. Note that the metric on $N$ in these coordinates can be written $g_k = g_0 + O_k(|x|^2)$, and we may suppress the dependance on $k$ and simply write $g_k=g_0 + O(|x|^2)$ where $g_0$ denotes the Euclidean metric. We can therefore consider 
$$\widetilde{M}^1_k  \subset \Big(B^{\R^{n}}_{\delta/r_k}(0), \widetilde{g}_k\Big)$$ 
where $\widetilde{g}_k = g_0 + r_k^2 O(|x|^2)$. We have that $\widetilde{M}^1_k$ is a potentially disconnected CMC hypersurface with mean curvature $H_k=r_k H\to 0$. Moreover by \eqref{lsmon}, for any $R>0$:
\begin{equation}\label{mon}
\limsup_{k}\frac{\h^{n-1}(\widetilde{M}_k \cap B_R)}{\om_{n-1} R^{n-1}} = \limsup_k\fr{\|M_k\|(B^N_{Rr_k}(z_k))}{\om_{n-1} (Rr_k)^{n-1}}\leq 2.\end{equation}
 It follows from a standard argument using Lemma \ref{curvest} (following along the lines of e.g. \cite[Theorem 2.4, Corollary 2.5]{BS18}) that each component of $\widetilde{M}_k$ converges smoothly, away from finitely many points, to a minimal limit $M_\infty$ which has Euclidean volume growth and finite index by construction, and if the convergence is of multiplicity one  then it is smooth everywhere. $M_\infty$ has at most two ends by taking the limit as $R\to \infty$ in \eqref{mon} so by Lemma \ref{lem:cats} it must be a catenoid or at most two parallel planes. Finally, appealing again to the arguments in e.g. \cite[Theorem 2.4, Corollary 2.5]{BS18}, if the convergence is not multiplicity one (equivalently not smooth), then the limit must be (stable in compact subsets, and therefore) a plane of multiplicity two.

	For the second part of the lemma, we first note that $r_k\leq \r_k$ and $\frac{dist_g(x_k,z_k)}{r_k + \r_k}\leq C$ implies that $B_{r_k}(x_k)\In B_{2C\r_k}(z_k)$. We leave the final details to the reader as the arguments are standard, noting that in case $1(b)$ there must exist a sequence of points converging to the origin in $\widehat{M}_k$ where the second fundamental form blows up, and thus it cannot converge smoothly and graphically near the origin.     
\end{proof}

\begin{lemma}\label{lem:neck}
Let $V=\cup_{\ell =1}^L \ov{V}^\ell$ be a hypersurface effectively embedded in $N$ with constant mean curvature $H>0$ and let $\{M_k\}_{k\in \N}$ be a sequence of $H$-hypersurfaces with $\sup_k \Ind_0(M_k)<\infty$ that H-converges to $V$ with multiplicity one and let $x\in t(V)$. Suppose $(x_k,r_k)$ is a point-scale sequence for $\{M_k\}$ based at $x$ so that the blow-up at this scale converges smoothly locally to a catenoid. Suppose further that there is a positive sequence $\r_k\to 0$ with $\r_k/r_k \to \infty$ and so that $\widetilde{M}_k : = M_k/\r_k$ converges smoothly to the double plane $\{x^n=0\}$ on $B_1\sm B_\eta$ for all $\eta >0$. Then there exists $R_0<\infty$ so that for all $R\geq R_0$ $$\widetilde{M}_k\cap (B_1\sm B_{R s_k})\qquad \text{where} \qquad s_k = r_k/\r_k \to 0$$
can be written as a pair of graphs over $\{x^n=0\}$ with mean curvatures pointing in opposite directions and the graphs converge to zero in $C^1$ as first $k\to \infty$ and then $R\to \infty$.   
\end{lemma}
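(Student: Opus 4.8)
The plan is to localize the analysis to the dyadic annuli $A_j = B_{2^{-j}}\setminus B_{2^{-j-1}}$ of $\widetilde M_k$ (in the $\rho_k$-scale), show that in each such annulus the surface is, after passing to a further rescaling, either close to the double plane or close to the catenoid neck, and patch these descriptions together across the whole range $R s_k \leq |x| \leq 1$. First I would record the two extreme regimes: near $|x|\approx 1$ the hypothesis gives smooth graphical convergence to the double plane $\{x^n=0\}$, so there $\widetilde M_k\cap(B_1\setminus B_\eta)$ is two graphs $u_k^\pm$ with $\|u_k^\pm\|_{C^1}\to 0$ and with mean curvature vectors pointing oppositely (this is forced by the effective embeddedness of $V$ and Claim \ref{disconn}, since exactly one component has a genuine neck); near $|x|\approx R s_k$, rescaling by $r_k/\rho_k = s_k$ places us in the catenoid blow-up of Lemma \ref{lem:cats_conv}(1), where outside $B_R$ the catenoid is itself a pair of graphs over $\{x^n=0\}$ with oppositely-pointing mean curvature and $C^1$-slope tending to $0$ as $R\to\infty$ (this is the asymptotic-to-a-plane / regular-at-infinity property of the catenoid from \cite{sc1}). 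So both ends of the neck region are already described correctly; the content is that nothing bad happens in between.

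The key step is a \emph{no-concentration-in-the-annuli} argument. I would argue by contradiction: if the conclusion fails, there is a sequence of scales $\sigma_k$ with $R s_k \leq \sigma_k$ and $\sigma_k/s_k \to \infty$ (so strictly inside the neck region, far from the catenoid core) along which $\widetilde M_k\cap (B_{2\sigma_k}\setminus B_{\sigma_k/2})/\sigma_k$ fails to be a pair of small $C^1$ graphs. Blowing up at scale $\sigma_k$, the mass ratio bound \eqref{lsmon} (area ratio $\leq 2$ at every point and scale near $x$, from varifold convergence plus monotonicity) together with Lemma \ref{curvest} forces, after a subsequence, smooth convergence away from finitely many points to a minimal hypersurface with Euclidean volume growth, finite index, and at most two ends — hence by Lemma \ref{lem:cats} a catenoid, at most two parallel planes, or (if multiplicity two) a single plane with multiplicity two. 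The scale separation $\sigma_k/r_k\to\infty$ rules out a catenoid core (the curvature blow-up point of the true catenoid sits at scale $r_k$, i.e. at radius $s_k\ll \sigma_k$ after rescaling by $\rho_k$, so it is pushed to the origin); and $\sigma_k\to 0$ with $\sigma_k\ll 1$ rules out the limit being a nontrivial piece of $V$. So the blow-up limit is one or two planes; if it is two honest parallel planes they must coincide (both pass near the origin by the monotonicity-forced nontriviality, and they are graphs over the same $T_xV$) giving a smooth double-plane limit with oppositely-pointing mean curvature vectors — exactly the claimed graphical description, contradiction; if instead it is a multiplicity-two plane with a genuine non-smooth point, then at that point the second fundamental form of $\widetilde M_k$ (in the $\rho_k$-scale) blows up, which would itself generate a new point-scale sequence based at $x$ with scale $\gg r_k$ along which a further catenoid bubble forms — but the finite-index bound caps the number of such bubbles (Lemma \ref{lem:index}, and the counting as in Theorem \ref{thm:bubb}(3)), and more to the point the non-smooth point would contradict the assumed \emph{smooth} convergence of $\widetilde M_k=M_k/\rho_k$ to the double plane on $B_1\setminus B_\eta$ for the fixed $\eta$ once $\sigma_k \geq \eta$... so really the only delicate case is $\sigma_k$ comparable to intermediate scales, handled by this same dichotomy.

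Having excluded concentration at every intermediate scale, I would run a standard covering/continuity-of-sheets argument: cover $B_1\setminus B_{Rs_k}$ by $O(\log(1/(Rs_k)))$ overlapping dyadic annuli, on each of which $\widetilde M_k$ is a pair of $C^1$-small graphs $u_{k,j}^\pm$ over $\{x^n=0\}$; on overlaps the graphs must be identified (the "upper" sheet with the "upper", by the separation/ordering argument as in the proof of Theorem \ref{thm:mult} and Claim \ref{disconn}: the sign of $\langle \nu_{M_k},n_x\rangle$ cannot flip), so they glue to two global graphs $u_k^\pm$ on the whole annulus with mean curvature vectors pointing in opposite directions. The uniform $C^1$-smallness as $k\to\infty$ then $R\to\infty$ follows from the uniform annulus-by-annulus estimates just established plus the two boundary regimes; quantitatively one can interpolate between the double-plane end and the catenoid end, or simply observe that any failure of $\lim_R\lim_k\|u_k^\pm\|_{C^1}=0$ would again produce, via the contradiction scheme above, a scale at which concentration occurs. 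I expect the \textbf{main obstacle} to be the second, middle paragraph: cleanly certifying that every blow-up limit at an intermediate scale is a (possibly multiplicity-two) \emph{plane} and never a catenoid — i.e. that there is no "second bubble" hiding in the neck — which is exactly where the hypothesis $\rho_k/r_k\to\infty$ (the scales are genuinely separated) and the finite-index / mass-ratio-$\leq 2$ inputs must be combined carefully; the patching and the two-boundary-regime analysis are routine by comparison.
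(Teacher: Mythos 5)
Your blow-up dichotomy (intermediate-scale limits are either a catenoid --- ruled out by scale separation and the index cap --- or a plane of multiplicity two) is sound and matches the dichotomy of Lemma~\ref{lem:cats_conv}. But there is a genuine gap in the step where you pass from ``the blow-up $\widehat M_k = \widetilde M_k/t_k$ converges to a plane with multiplicity two'' to ``the surface is a pair of $C^1$-small graphs over $\{x^n=0\}$''. Lemma~\ref{lem:cats_conv} only gives that the limit is \emph{some} plane $E$ through the origin; it says nothing about the orientation of $E$. Your dyadic patching then silently identifies $E$ with $\{x^n=0\}$ in each annulus. The patching and sheet-ordering argument controls connectivity and orientation-consistency of the two sheets, and the curvature bound controls how fast the normal can rotate from one dyadic annulus to the next, but it does \emph{not} prevent a slow cumulative tilt over the $O(\log(1/(Rs_k)))$ annuli between the catenoid scale and the $\rho_k$ scale. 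Concretely, a blow-up at an intermediate scale $\sigma_k$ converging smoothly to a \emph{tilted} double plane $E_\sigma \neq \{x^n=0\}$ is not ``concentration'' in your contradiction scheme --- no curvature blows up, no catenoid forms --- so the final step (``any failure of $\lim_R\lim_k\|u_k^\pm\|_{C^1}=0$ would produce a scale at which concentration occurs'') does not close.

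This orientation control is precisely the content of the paper's proof. After reducing to showing that, for any $t_k\to 0$ with $s_k/t_k\to 0$, the blow-up at scale $t_k$ converges to $\{x^n=0\}$ on $B_2\setminus B_1$ (which suffices by scale-invariance of the slope), the authors construct a local CMC foliation by graphs (Proposition~\ref{prop:folio}) whose boundary values match the top sheet of $\widetilde M_k$ shifted by a constant, change coordinates so that horizontal slices $\{y=c\}$ become leaves of this CMC foliation with the top sheet having constant boundary value $h_k\geq 0$, and then apply the maximum principle for CMC graphs: if the rescaled top sheet dipped below $\{y=0\}$ in the interior of the annulus, it would have to coincide with a horizontal slice, contradicting $h_k\geq 0$. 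This is a global barrier/trapping argument which crucially exploits $H>0$ (the foliation is by $H$-surfaces), and it delivers the orientation $E=\{x^n=0\}$ uniformly across all scales without needing to chain together annulus-by-annulus estimates. You would need to add such a barrier or monotonicity mechanism to make your interpolation step rigorous; the dyadic patching alone is insufficient.
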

\begin{proof}
We will show that if $t_k\to 0$ is a sequence of positive numbers so that $s_k/t_k\to 0$, then $\widehat{M}_k=\widetilde{M}_k/t_k$ converges smoothly and graphically to $\{x^n=0\}$ on compact subsets away from the origin - in fact we need only check this in the region $B_2 \sm B_1$. Since the slope of the graph is scale-invariant, this will complete the proof. 

Lemma \ref{lem:cats_conv} tells us that (up to subsequence) $\widehat{M}_k$ converges to some plane passing through the origin. By the hypotheses of the lemma and the choice of $t_k$, this convergence happens smoothly with multiplicity two in compact subsets away from the origin. In particular there is some $(n-1)$-dimensional linear subspace $E$ of $\R^{n}$ so that $\widehat{M}_k \cap B_2\sm B_1$ can be written as two graphs over $E$ 
which are uniformly converging to zero as $k\to\infty$. We will prove below that $E=\{x^n=0\}$; this fact will be independent of the choice of sequence $t_k$ as above, and any subsequence.

Without loss of generality we will prove what we need only for the top sheet, whose mean curvature points upwards. Denote by $D_\xi$ the closed ball of radius $\xi$ centred at the origin in $\{x^n = 0\}$. Let $u_k:D_1\sm D_{1/4}\to \R$ describe the top sheet of $\widetilde{M}_k$ (whose mean curvature points upwards) and notice that $\|u_k\|_{C^l}\to 0$ for all $l$, and $H_k =\rho_k H \to 0$ is the mean curvature of $\widetilde{M}_k$.  Thus, using Proposition \ref{prop:folio} and Remark \ref{rmk:folio}, we can foliate a region of $D_{1/2}\times [-\dl,\dl]$ by CMC graphs  $v_{k}^h:D_{1/2}\to \R$ with boundary values given by $u_k + h$, $h\in\R$. Notice that as $k\to \infty$ we have that $g_k\to g_0$ and $u_k \to 0$ in $C^l$ for all $l$ which tells us that $$\|v^h_{k}-h\|_{C^{2,\al}}\to 0$$ as $k\to \infty$ which follows from Proposition \ref{prop:folio}. 

Similarly as is \cite[Lemma 3.1]{W15} (cf \cite{BS18}) we can define a diffeomorphism of this cylindrical region (via its inverse)
$$F^{-1}_k(x^1,\dots,x^{n-1}, y) = (x^1,\dots, x^{n-1}, v_k^{y-h_k}(x^1,\dots, x^{n-1}))$$
where $h_k \to 0$ is uniquely chosen so that $v_k^{-h_k}(0,\dots ,0) = 0$ (so that $F_k(0) =0$). Notice that $F_k \to Id$ as $k\to \infty$ in $C^2$, so in particular the metric $g_k$ in these coordinates is also converging to the Euclidean metric.  

We now work with these new coordinates $(x^1,\dots, x^{n-1}, y)$, on which horizontal slices $\{y = c\}$ provide a CMC foliation, and furthermore in these coordinates, the part of $\widetilde{M}_k$ described by $u_k$ takes a constant value $h_k$ at the boundary of $D_{1/2}$. Without loss of generality (by perhaps choosing a sub-sequence) we assume that $h_k \geq 0$ for all $k$ (if $h_k \leq 0$ the proof is similar). 

We now blow-up this coordinate system by a factor $1/t_k$, and let 
$$\widehat{M}_k =  \widetilde{M}_k/t_k\In D_{1/(2 t_k)}\times [-\dl/t_k,\dl/t_k].$$ Strictly speaking this is not the same $\widehat{M}_k$ as before (which was a blow-up of $\widetilde{M}_k$ in a different coordinate system) but since our two choices of coordinates are asymptotically equivalent (as $k\to \infty$), their limits are equal. In particular we still have that $\widehat{M}_k\cap B_2/B_{1}$ is uniformly graphical over $E$ (equivalently defined in either coordinates), and our goal is to prove that $E=\{y=0\} = \{x^n=0\}$. Notice that over $\partial D_{1/(2 t_k)}$, the top sheet of $\widehat{M}_k$ is described by a constant function of value $\widehat{h}_k = h_k/t_k \geq 0$, and the horizontal slices $\{y=c\}$  still provide a CMC foliation where the mean curvature of the foliation equals that of the top sheet of $\widehat{M}_k$.

For a contradiction suppose that $E\neq \{y=0\}$, which means that 
$$\min_{\widehat{M}_k\cap ((D_{1/(2 t_k)}\sm D_{1})\times \R)} y <0$$
and the minimum is not attained at a boundary point. The maximum principle for CMC graphs then implies that $\widehat{M}_k$ is globally a horizontal slice $\{y=-c_0\}$, for some $c_0<0$, which contradicts $\widehat{h}_k \geq 0$. Thus we must have $E=\{y=0\}$. 
  
Thus we have that, for $k$, $R$ sufficiently large $\widetilde{M}_k\cap (B_1\sm B_{Rs_k})$ is graphical over $\{x^n=0\}$ with slope $\eta = \eta(k,R)\to 0$ as we first send $k\to \infty$ then $R\to \infty$. 
\end{proof}

\begin{proof}[Proof of Theorem \ref{thm:bubb}]

To begin we choose $\delta$ sufficiently small so that 
\begin{equation*}
2\delta<\min\left\{\min_{\Dl\ni y_i\neq y_j\in \Dl} d_g(y_i,y_j),\,\,\, \frac{inj_N}{2}\right\}
\end{equation*}
and furthermore that $B_\dl^N(x)\cap V$ is stable for all $x\in V$. Towards the end of the proof we will consider $\dl \to 0$, but for the majority of the proof we work with some fixed $\dl$ satisfying the above. 

From now on we work with a single $y\in \Dl$ since we only need check the conclusion of the theorem for one such point chosen arbitrarily. 
\paragraph{Picking the smallest scale} 
Let
$$r^1_k = \inf \Big\{r>0 \mid M_k\cap B_r(p) \text{ is unstable for some } p\in B_\delta(y)\cap M_k \Big\}.$$
Note that with $r^1_k$ defined above, we can pick $p^1_k\in B_\delta(y)\cap M_k$ and $\delta >r^1_k>0$ such that $M_k\cap B_{3r^1_k/2}(p^1_k)$ is unstable.

We must have $p^1_k \to y$ since if not, we know that $M_k\cap B_{\ed_g(p^1_k,y)/2}(p^1_k)$ converges smoothly to $V$ and thus is eventually stable inside all such balls by the choice of $\delta$.

Furthermore $r^1_k\to 0$ as otherwise the regularity theory of Lopez-Ros and Schoen-Simon (see Lemma \ref{sreg}) would give a uniform $L^{\infty}$ estimate on the second fundamental form for $M_k\cap B_{\delta/2}(y)$ and we reach a contradiction to the fact that $y$ is a point of bad convergence.

Thus $(p^1_k,r^1_k)$ is a point scale sequence based at $y$ and we let $\widetilde{M}^1_k$ be the blow-up at this scale (see Definition \ref{def:ps}).  

The metric on $N$ in these coordinates can be written $g_k = g_0 + O_k(|x|^2)$, and we may suppress the dependance on $k$ and simply write $g_k=g_0 + O(|x|^2)$ where $g_0$ denotes the Euclidean metric. Thus we may consider 
$\widetilde{M}^1_k \subset (B^{\R^{n+1}}_{\delta/r^1_k}(0), \widetilde{g}_k)$ 
where $\widetilde{g}_k = g_0 + (r_k^1)^2 O(|x|^2)$.  
By the choice of $r^1_k$ we have that $\widetilde{M}^1_k$ is a potentially disconnected CMC hypersurface with mean curvature $H_k=r^1_k H\to 0$. 

Since $\widetilde M^1_k$ is stable inside every (Euclidean) ball of radius $\frac12$ in $(B^{\R^{n}}_{\delta/r^1_k}, \widetilde{g}_k$), by Lemma~\ref{sreg} , it converges (up to subsequence) smoothly with multiplicity one to some minimal limit $M^1_\infty$ in $\R^n$ equipped with the Euclidean metric and by Lemma \ref{lem:cats_conv} $M^1_\infty$ is either at most two planes or a catenoid.   

 $M^1_\infty$ cannot be a collection of one or two planes, as this would contradict the instability hypothesis on balls of radius $2$ centred at the origin: if $M^1_\infty$ were a collection of planes it would be strictly stable in any compact set, and this strict stability would eventually pass to $\widetilde{M}_k$ for large $k$. Thus we must have that $M^1_\infty$ is a catenoid. 
Finally, since $index(M_k\cap B_{3r^1_k/2}(p^1_k))\geq 1$,  for all large $k$ and any $\xi>0$ we have, by domain monotonicity of eigenvalues, 
$$index(M_k\setminus B_\xi(y))\leq index(M_k \setminus B_{3r^1_k/2}(p^1_k))\leq \mathcal{I}-1$$ and thus $index(V)\leq \mathcal{I}-1$. This lat step follows since there exists $\xi >0$ so that 
\begin{equation}\label{eq:ind}
	\limsup_k index (M_k\sm \cup_{y\in \Dl} B_\xi (y))\geq index(V\sm \cup_{y\in \Dl} B_\xi (y)) = index(V).
\end{equation}
Here the index of any domain is computed with respect to Dirichlet boundary conditions.

\paragraph{Picking further scales}
Now let 
$$r^2_k = \inf \Big\{r>0 \mid B_r(p)\cap( M_k\setminus B_{2r^1_k}(p^1_k)) \text{ is unstable for some } p\in B_\delta(y)\cap M_k \Big\}.$$
If $\liminf_{k\to\infty} r^2_k >0$ then the process of picking point-scale sequences stops and we go on to the neck analysis. Assuming therefore that $r^2_k\to 0$ we must also have the existence of $p^2_k\in M_k\cap B_\dl (y)$ so that $(p^2_k,r^2_k)$ is a point scale sequence based at $y$ and $(M_k\cap B_{3r^2_k/2}(p^2_k))\setminus B_{2 r^1_k}(p^1_k))$ is unstable. As before, let $\widetilde{M}^2_k$ be the blow-up at this scale which by Lemma \ref{lem:cats} converges to at most two planes or a catenoid. 

There are two distinct cases: 
\begin{enumerate}
\item 	$\frac{dist_g(p^1_k,p^2_k)}{r^1_k+r^2_k}\leq C <\infty$ (i.e. $B_{r^1_k}(p^1_k)\In B_{3C r^2_k}(p^2_k)$) and $\widetilde{M}^2_k$ converges non-smoothly to a double plane
\item $\frac{dist_g(p^1_k,p^2_k)}{r^1_k+r^2_k}\to\infty$ and $\widetilde{M}^2_k$ converges smoothly to a catenoid.  
\end{enumerate}

Indeed, in the first case we claim that the limit is attained non-smoothly and is therefore a double plane by Lemma \ref{lem:cats_conv}. For a contradiction if the limit is attained smoothly we must have $r^2_k/r^1_k \leq K$ for some $K$ and the limit is  a catenoid by Case $1(a)$ of Lemma \ref{lem:cats_conv}. However, by definition of $r^1_k$ we have $\lambda_1(M_k\cap B_{3r^1_k/2}(p^1_k))<0$ and $\lambda_1(M_k\cap B_{3r^2_k/2}(p^2_k))\setminus B_{2 r^1_k}(p^1_k))<0$. These disjoint open regions of $M_k$ remain strictly unstable for all $k$ and thus, after blowing up at scale $(p^2_k,r^2_k)$ pass to two non-empty disjoint open regions of the limiting catenoid $\Om_1, \Om_2$ for which $\lambda_1(\Om_1) \leq 0$ and $\lambda_1(\Om_2)\leq 0$. This contradicts the fact that the catenoid has index one.

In the second case we invite the reader to blow up precisely as we did for $(r^1_k,p^1_k)$ and see that $\widetilde{M}^2_k$ converges smoothly to a catenoid: at this blow up scale we once again have that, on compact subsets, $\widetilde{M}^2_k$ is stable on all balls of radius $\fr12$ and the first forming catenoid is disappearing at infinity. 

We wish to keep track of this point-scale sequence in either scenario, but in case one, the blow-up procedure produces no extra catenoid so we mark this sequence for removal later. In either case we conclude similarly as before that $index(V)\leq \mathcal{I}-2$. 

Now suppose that we have picked $j-1$ point-scale sequences $\{(r^i_k,p^i_k)\}_{i=1}^{j-1}$ satisfying 
\begin{enumerate}[a)]
\item for each $2\leq i\leq j-1$ we have $r^i_k\to 0$, $p^i_k \to y$	
\item Denoting $U_{i-1}=\cup_{s=1}^{i-1} B_{2r^s_k}(p^s_k)$ 
$$(M_k\cap B_{3r^i_k/2}(p^i_k))\setminus U_{i-1} \qquad \text{is unstable}$$ 
\item $index(M_k\sm U_{j-1}) \leq \mathcal{I}-(j-1)$ and thus $index(V)\leq \mathcal{I}-(j-1)$ by \eqref{eq:ind}

\end{enumerate}
Furthermore we suppose there are two distinct cases: 
\begin{enumerate}
\item There exists $C<\infty$ and $m<i$ so that $B_{r^m_k}(p^m_k)\In B_{C r^i_k}(p^i_k)$ and blowing up at this scale we converge non-smoothly to a double plane 
\item $\min_{m<i}\frac{dist_g(p^m_k,p^i_k)}{r^m_k+r^i_k}\to\infty$ and blowing up at this scale yields a catenoid as a smooth limit.
\end{enumerate}
We now pick the next shrinking scale (if it exists) according to 
$$r^j_k = \inf \Big\{r>0 \mid B_r(p)\cap(M_k\setminus U_{j-1})  \text{ is unstable for some } p\in B_\delta(y)\cap M_k \Big\}.$$
If $\liminf_{k\to\infty} r^j_k >0$ then the process of picking point-scale sequences stops and we go on to the neck analysis. Assuming therefore that $r^j_k\to 0$ we now perform the usual argument that first of all there exists $p^j_k\in M_k\cap B_\dl(y)$ so that 
$$(M_k\cap B_{3r^j_k/2}(p^j_k)) \setminus U_{j-1}\qquad \text{is unstable}$$ and show that once again we are in case 1. or 2. above (we leave the details to the reader) and this time $index(M_k\sm U_j)\leq \mathcal{I}-j$ implying $index(V)\leq \mathcal{I}-j$. In short, we satisfy conditions $a)-c)$ and the $j^{th}$ sequence also satisfies condition 1. or 2.

This process must stop eventually (after at most $\mathcal{I}$ iterations) and we can move on to the neck analysis, noting that if $J_y$ is the total number of distinct point-scale sequences forming at $y$ (distinct in the sense that we have removed all point-scale sequences satisfying case 1), then in particular have $index(V)\leq \mathcal{I}-J_y$ which is part 3 of the theorem.   

Before we move on let us now throw away all the marked sequences (those satisfying condition 1 above), since blowing up at these scales means that we see only a double plane passing through the origin as a weak limit, and we have finished proving part 1 of the theorem.  
\paragraph{Part 2 of the theorem} If there is only one catenoid forming at $y$ (i.e. $J_y=1$) we first pick an arbitrary $\r_k\to 0$ so that $\r_k/r^1_k \to \infty$ and we first apply Lemma \ref{lem:neck} to the blow up $\widetilde{M}_k$ at scale $(p^1_k,\r_k)$ to conclude that $\widetilde{M}_k\cap (B_{1}  \sm B_{Rr^1_k/\r_k})$ is uniformly graphical over a fixed plane $E$ (in these coordinates) with slope converging to zero as $k\to \infty$ and then $R\to \infty$. 

We now consider the point scale sequence given by $(p^1_k, \dl)$ and the corresponding blow up $\check{M}_k=M_k/\dl$. Notice that, for any $\dl >0$ we can always rotate the coordinates so that $T_y V$ is parallel to $\{x^n = 0\}$ and that for any fixed $\mu<1$, $\check{M}_k \cap B_1 \sm B_\mu$ can be written as two graphs over $\{x^n = 0\}$ with slope $\eta \to 0$ as we first send $k\to \infty$ and $\dl \to 0$. The reader can check that (by following the steps in the proof of Lemma \ref{lem:neck}) $\check{M}_k\cap B_1 \sm B_{\r_k/\dl}$ is uniformly graphical over $\{x^n = 0\}$ with slope converging to zero as $k\to \infty$ and $\dl \to 0$. Thus the orientation of the plane $\{x^n =0\}$ is passed down to the next scale (so $E=\{x^n=0\}$ above), and we recover that $\check{M}_k\cap B_1 \sm B_{Rr^1_k/\dl}$ is uniformly graphical over $\{x^n=0\}$ (equivalently over $T_y V$) with slope converging to zero as $k\to\infty$, $R\to \infty$ and finally $\dl \to 0$.     

By undoing the scaling we see that $M_k\cap (B_\dl(p^1_k) \sm B_{Rr^1_k}(p^1_k))$ is uniformly graphical over $T_y V$ with slope $\eta(k,R,\dl)$ converging to zero as $k\to \infty$, $R\to \infty$ and $\dl \to 0$.

\ 

When there is more than one bubble we simply inductively apply Lemma \ref{lem:neck} at progressively smaller scales, noting that the orientation of the limit plane (i.e. $T_y V$) is passed down to each smaller scale: the ends of the catenoids are always parallel to $T_y V$.

\paragraph{The neck analysis when $J_y >1$}   
Set $\r_k  =2\max_{j>1} {\rm dist}(p^1_k,p^j_k)$ which gives in particular that $\r_k/r^1_k \to \infty$ and Lemma \ref{lem:cats_conv} guarantees that by blowing up at scale $(p^1_k,\r_k)$ we see weak convergence of $\widetilde{M}_k=M_k/\r_k$ to a double plane. Furthermore there are $J_y$ catenoid bubbles forming inside the ball of radius $1/2$ at this scale and the convergence is smooth and graphical on compact subsets of $\R^n\sm B_1$. 

In exactly the same fashion as above we now consider $\check{M}_k=M_k/\dl$ the blow up at scale $(p^1_k,\dl)$. After rotating our coordinates so that $T_y V$ is parallel to $\{x^n=0\}$, (and again following the steps in the proof of Lemma \ref{lem:neck}) we have that $\check{M}_k \cap B_1\sm B_{\r_k/\dl}$ is uniformly graphical over $\{x^n=0\}$. 

Going back to $\widetilde{M}_k$ we now successively apply Lemma \ref{lem:neck} to each bubble forming inside $B_1$ at scale $(p^1_k,\r_k)$ to conclude part 2 of the theorem.

\paragraph{No loss of total curvature, part 4 of the theorem} 
By smooth, multiplicity one convergence away from $\Dl$ we know that 
\begin{equation}
\lim_{\dl \to 0}\lim_{k\to \infty} \int_{M_k\sm\cup_{y\in \Dl} B_\dl(y)} |A_k|^{n-1} \to \sum_i \int_{V^i} |A|^{n-1} = \int_V |A|^{n-1}.	
\end{equation}
Furthermore, by the scale invariance of the total curvature, given any point-scale sequence $(p^{\ell,y}_k,r^{\ell,y}_k)$ corresponding to a catenoid we have 
\begin{equation}
\lim_{R\to \infty} \lim_{k\to \infty}\sum_{y\in \Dl}\sum_{\ell =1}^{J_y} \int_{M_k\cap B_{Rr^{\ell,y}_k(p^{\ell,y}_k)}} |A_k|^{n-1} = J\mathcal{T}(\mathcal{C}^{n-1}).	
\end{equation}
It thus remains to check that, in each degenerating neck region between the bubble scales we have 
\begin{equation}
\lim_{\dl \to 0} \lim_{R\to \infty} \lim_{k\to \infty}  \int_{M_k\cap (\cup_{y\in \Dl} (B_\dl(y) \sm  \cup_{\ell=1}^{J_y} B_{Rr^{\ell,y}_k}(p^{\ell,y}_k))} |A_k|^{n-1} =0.	
\end{equation}
 Given that we know such regions are uniformly graphical over the limit, with slope $\eta \to 0$ in this limit, the argument now follows exactly the lines as that appearing in pp 4392 -- 4394 with the exception that equation (4.6) there must be replaced with 
$$|\Delta_{\widehat{g}_k} u_k| =  \Big| \widehat{g}_k^{\alpha\beta} \Gamma_k(\widehat{u}_k)^{n+1}_{jl} \pl{\widehat{u}_k^j}{x^\alpha}\pl{\widehat{u}_k^l}{x^\beta} + \widehat{g}_k^{\alpha\beta}(g_k)_{ij}\pl{\widehat{u}_k^j}{x^\alpha}\pl{\widehat{u}_k^l}{x^\beta} H  \Big| \leq C\eta^2 (|\widehat{u}_k| + H),$$
since we are working with CMC $H\neq 0$. This makes no difference to the remainder of the argument so we leave it to the interested reader to follow up.  

\paragraph{Finite diffeomorphism type, part 5 of the theorem} 
Notice that we have implicitly constructed a finite open cover of $\cup_k M_k$ so that in each element of the cover the $M_k$'s are pair-wise graphical over one-another, for sufficiently large $k$. Thus the $M_k$'s are globally graphical over one-another and have the same diffeomorphism type. 
\end{proof}

\subsection{Local CMC foliations}\label{folio}
Here we wish to show the existence of local CMC foliations by disks for metrics sufficiently close to the Euclidean metric, and mean curvature sufficiently small. Let $D_1\In \R^{n-1}$ be the closed unit (Euclidean) ball and $C = D_1 \times \R\subset \R^n$. For any fixed $\al \in (0,1)$ denote by $\mathcal{G}$ the collection of $C^{2,\alpha}$ Riemannian metrics on $C$ so that we can view $\mathcal{G} = C^{2,\al}(C,\mathcal{R})$ where $\mathcal{R}$ is the open set of symmetric, positive-definite $n\times n$-matrices. Let $W=C^{2,\al}(D_1)$ and $U=C_0^{2,\al}(D_1)=\{u\in W: \text{$u\equiv 0$ on $\partial D_1$}\}$.

For $(t,g,w,u)\in \R \times\mathcal{G} \times W\times U$ we denote $\mathcal{H}_g(t+w+u)$ the $g$-mean curvature of the graph $t+w+u$ with respect to the upward pointing unit normal $N_g (t+w+u)$. We consider $\Phi :\R \times\mathcal{G} \times W\times U \times C^{0,\al}(D_1)\to C^{0,\al}(D_1)$ defined by 
\begin{equation}
\Phi(t,g,w,u,H) = \mathcal{H}_g(t+w+u)-H
\end{equation}
 and notice that $\Phi$ is $C^1$ with 
 $$\Phi(t,g_E,0,0,0)=0.$$ 
 Here $g_E\in \mathcal{G}$ denotes the Euclidean metric on $C$. We now consider the derivative with respect to $u$ at $u=0$, $D_4 \Phi (t,g_E,0,0,0):C_0^{2,\al}(D_1)\to C_0^{0,\al}(D_1)$ where for $v\in C_0^{2,\al}(D_1)$ we have 
 $$D_4 \Phi (t,g_E,0,0,0)[v] = \pl{}{h}\vlinesub{h=0}\mathcal{H}_{g_E}(t+ hv).$$ 
 This is equivalent to considering an infinitesimal variation of the flat disc by the ambient vector field $V(x_1,\dots,x_n)=(0,\dots , 0, v(x_1,\dots, x_{n-1}))\in C_0^{2,\al}(C)$, whose normal component is given by $\la V,N_{g_E}(t+u_H)\ra =v$. Thus we have   
\begin{eqnarray}
D_4 \Phi (t,g_E,0,0,0)[v] = \Dl v 
\end{eqnarray}
which is a Banach space isomorphism, noting that by Schauder theory we have
 $$\|D_4 \Phi(t,g_E,0,0,0)^{-1}[f]\|_{C^{2,\al}(D_1)}\leq C\|f\|_{C^{0,\al}(D_1)}.$$

 In particular for each fixed $t$ there exists $\eps >0$ and a $C^1$ mapping 
 \begin{equation}
 \mathcal{U}:(t-\eps, t+\eps)\times B^{\mathcal{R}}_\eps (g_E) \times B^W_{\eps}(0)\times B_\eps^{C^{0,\al}}(0) \to B_{\dl}^U(0) 	
 \end{equation}
 so that whenever 
 $$(s,g,w)\in (t-\eps, t+\eps)\times B^{\mathcal{R}}_\eps (g_E) \times B^W_{\eps}(0)\times B_\eps^{C^{0,\al}}(0)$$ then $\Phi(s,g,w, \mathcal{U}(s,g,w,H),H)=0$. In particular when $g,w,H$ are fixed, $s+w+\mathcal{U}$ is a graphical foliation with mean curvatures given by the function $H$ with boundary values given by $s+w$. By uniqueness of such $H$-graphs we can carry out this local foliation for any $t$ noting that whenever two leaves have the same boundary values, they must coincide. Thus we have proven:
 \begin{proposition}\label{prop:folio}
Let $D_1\In \R^{n-1}$ be the closed unit (Euclidean) ball and $C = D_1 \times \R\subset \R^n$. Then there exists $\eps>0$ so that for any $w\in C^{2,\al}(D_1)$, $H\in C^{0,\al}(B_1)$ and Riemannian metric $g$ on $C$ satisfying 
$$\|w\|_{C^{2,\al}} + \|g-g_E\|_{C^{2,\al}} + \|H\|_{C^{0,\al}} < \eps$$
there exists a $C^{2,\al}$ foliation of graphs $u:\R\to C^{2,\al}(D_1)$ with $g$-mean curvature $H$ pointing upwards, and for each $t\in \R$, $u(t)$ has boundary values $t+w$. Furthermore $\|u\|_{C^{2,
\al}}$ depends on $t,w,g$ and $H$ in a $C^1$ way.  	
\end{proposition}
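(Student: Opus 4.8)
The plan is to obtain the foliation by a single application of the implicit function theorem in Hölder spaces, run at each height $t$ and then patched together. First I would record the ingredients at the reference configuration $(t,g_E,0,0,0)$: the map $\Phi(t,g,w,u,H)=\mathcal{H}_g(t+w+u)-H$ is $C^1$ on $\R\times\mathcal{G}\times W\times U\times C^{0,\al}(D_1)$ because the $g$-mean curvature of a graph is given by a smooth, quasilinear, uniformly elliptic second-order operator in the graph function whose coefficients depend smoothly on the metric; $\Phi(t,g_E,0,0,0)=0$ since a horizontal flat disk is minimal in Euclidean space; and the partial derivative in the $U$-slot at this point is $D_4\Phi(t,g_E,0,0,0)=\Delta$, the Euclidean Dirichlet Laplacian on $D_1$, which by Schauder theory is a Banach-space isomorphism $C_0^{2,\al}(D_1)\to C^{0,\al}(D_1)$ with $\|\Delta^{-1}f\|_{C^{2,\al}}\le C\|f\|_{C^{0,\al}}$.

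The implicit function theorem then produces, for each fixed $t$, an $\eps>0$ and a $C^1$ map $\mathcal{U}=\mathcal{U}(t,g,w,H)$ defined for $(g,w,H)$ in an $\eps$-ball about $(g_E,0,0)$, with $\Phi(t,g,w,\mathcal{U},H)=0$ and $\mathcal{U}(t,g_E,0,0)=0$. The quantitative point that needs care is that $\eps$ can be chosen independently of $t\in\R$: the operator norm of $D_4\Phi^{-1}$ and the modulus of continuity of $D_4\Phi$ near the reference point are controlled only by the Schauder constants of $D_1$ together with $\|g-g_E\|_{C^{2,\al}(C)}$, $\|w\|_{C^{2,\al}}$ and $\|H\|_{C^{0,\al}}$, all of which are global quantities on $C=D_1\times\R$; and since $\mathcal{H}_{g_E}$ is invariant under vertical translations, the entire $t$-dependence of $\Phi$ is a perturbation of size $O(\|g-g_E\|_{C^{2,\al}})$, so the vertical noncompactness of $C$ causes no loss of uniformity. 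Fixing such a uniform $\eps$ and then fixing $(g,w,H)$ with $\|w\|_{C^{2,\al}}+\|g-g_E\|_{C^{2,\al}}+\|H\|_{C^{0,\al}}<\eps$, I set $u(t):=t+w+\mathcal{U}(t,g,w,H)$; this is, for every $t\in\R$, a graph over $D_1$ with $g$-mean curvature $H$ (upward normal) and boundary values $t+w$, and $u$ depends on $(t,w,g,H)$ in a $C^1$ way, inherited from $\mathcal{U}$.

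It remains to check that $\{\graph u(t)\}_{t\in\R}$ is a genuine foliation, i.e. that the leaves are pairwise disjoint and fill a neighborhood. For this I would invoke the comparison (maximum) principle for the prescribed-mean-curvature quasilinear operator: if $t<s$ then the graph functions $u(t)$ and $u(s)$ have ordered boundary values and the same prescribed mean curvature with respect to the upward normal, hence $u(t)<u(s)$ throughout $D_1$; combined with the $C^1$ dependence on $t$ this shows the leaves sweep out an open slab. The same comparison principle gives uniqueness of an $H$-graph with prescribed boundary values within the relevant small $C^{2,\al}$-ball, which shows a posteriori that the leaf-by-leaf solutions are consistent and that $u$ is independent of auxiliary choices; the asserted $C^1$-dependence of $\|u\|_{C^{2,\al}}$ on $(t,w,g,H)$ is then exactly the $C^1$-dependence already furnished by the implicit function theorem.

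The only genuinely delicate step is the $t$-uniformity of the implicit-function-theorem neighborhood, forced by the noncompactness of $C$ in the vertical direction; the remaining pieces — the Schauder isomorphism, the smoothness of $\Phi$, and the comparison-principle arguments for disjointness and uniqueness of leaves — are standard.
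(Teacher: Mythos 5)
Your proposal is correct and follows essentially the same route as the paper: the implicit function theorem applied to $\Phi(t,g,w,u,H)=\mathcal{H}_g(t+w+u)-H$, using that $D_4\Phi(t,g_E,0,0,0)=\Delta$ is a Banach-space isomorphism $C_0^{2,\al}(D_1)\to C^{0,\al}(D_1)$ by Schauder theory. You make explicit two points the paper leaves implicit — the $t$-uniformity of the IFT neighborhood via vertical translation invariance of $g_E$, and the comparison-principle argument for disjointness/ordering of the leaves — which strengthens rather than changes the argument.
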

\begin{remark}\label{rmk:folio}
 If we consider $g$, $w$ and $H$ to have higher regularity we can pass this onto the foliation by the usual regularity results: in particular if $g$, is $C^{l,\al}$ for $l\geq 2$ then $\Phi_H$ is $C^{l-1}$ and we can find a $C^{l-1}$ CMC foliation, i.e. $u:\R \to C^{2,\al}$ is $C^{l-1}$ in $t$.

 \end{remark}

\center{Theodora Bourni, tbourni@utk.edu}\\
Department of Mathematics, University of Tennessee, U.S.A.

\center{Ben Sharp, B.G.Sharp@leeds.ac.uk}\\
School of Mathematics, University of Leeds, U.K.

\center{Giuseppe Tinaglia, giuseppe.tinaglia@kcl.ac.uk}\\
Department of Mathematics, King's College London, U.K.

\bibliographystyle{plain}
\bibliography{bill.bib}
\end{document}